\numberwithin{equation}{section} 
\theoremstyle{definition}
\newtheorem{definition}{Definition}[section]
\newtheorem{remark}[definition]{Remark}
\newtheorem{note}[definition]{Note}
\theoremstyle{plain}
\newtheorem{theorem}[definition]{Theorem}
\newtheorem{lemma}[definition]{Lemma}
\newtheorem{corollary}[definition]{Corollary}
\newtheorem{proposition}[definition]{Proposition}
\newcommand\scalemath[2]{\scalebox{#1}{\mbox{\ensuremath{\displaystyle #2}}}}
\newcommand{\gauss}[2]{\genfrac{[}{]}{0pt}{}{#1}{#2}}
\newsavebox{\foobox}
\newcommand{\slantbox}[2][0]{\mbox{%
        \sbox{\foobox}{#2}%
        \hskip\wd\foobox
        \pdfsave
        \pdfsetmatrix{1 0 #1 1}%
        \llap{\usebox{\foobox}}%
        \pdfrestore
}}
\newcommand\unslant[2][-.2]{%
  \mkern.5mu%
  \ThisStyle{\slantbox[#1]{$\SavedStyle#2$}}%
  \mkern-1mu%
}
\DeclareSymbolFont{sfletters}{OML}{iwonal}{m}{it}
\DeclareMathSymbol{\stheta}{\mathord}{sfletters}{"12}
\DeclareMathSymbol{\sphi}{\mathord}{sfletters}{"1E}
\DeclareMathSymbol{\svarphi}{\mathord}{sfletters}{"27}
\DeclareMathSymbol{\salpha}{\mathord}{sfletters}{"0B}
\DeclareMathSymbol{\sbeta}{\mathord}{sfletters}{"0C}
\DeclareMathSymbol{\sgamma}{\mathord}{sfletters}{"0D}
\DeclareMathSymbol{\stau}{\mathord}{sfletters}{"1C}
\DeclareMathSymbol{\sa}{\mathord}{sfletters}{"1A}
\newcommand{\mytheta}{\unslant\stheta}
\newcommand{\myphi}{\unslant\sphi}
\newcommand{\myvarphi}{\unslant\svarphi}
\DeclareSymbolFont{bsfletters}{OT1}{cmsmf}{bx}{n}
\DeclareMathSymbol{\Wsf}{\mathord}{bsfletters}{"57}
\def\sfa{\mathsf a}
\def\sfb{\mathsf b}
\def\sfc{\mathsf c}
\def\sfd{\mathsf d}
\def\sfr{\mathsf r}
\begin{document}

\title{\bf Grassmann graphs, degenerate DAHA, and non-symmetric dual $q$-Hahn polynomials}

\author {Jae-Ho Lee\thanks{Department of Mathematics and Statistics, University of North Florida, Jacksonville, FL 32224, U.S.A, E-mail: \texttt{jaeho.lee@unf.edu}} 
}
\date{}

\maketitle

\begin{abstract}
We discuss the Grassmann graph $J_q(N,D)$  with $N \geq 2D$, having as vertices the $D$-dimensional subspaces of an $N$-dimensional vector space over the finite field $\mathbb{F}_q$.
This graph is distance-regular with diameter $D$; to avoid trivialities we assume $D\geq 3$.
Fix a pair of a Delsarte clique $C$ of $J_q(N,D)$ and a vertex $x$ in $C$.
We construct a $2D$-dimensional irreducible module $\mathbf{W}$ for the Terwilliger algebra $\mathbf{T}$ of $J_q(N,D)$ associated with the pair $x$, $C$. 
We show that $\mathbf{W}$ is an irreducible module for the confluent Cherednik algebra $\mathcal{H}_\mathrm{V}$ and describe how the $\mathbf{T}$-action on $\mathbf{W}$ is related to the $\mathcal{H}_\mathrm{V}$-action on $\mathbf{W}$.
Using the $\mathcal{H}_\mathrm{V}$-module $\mathbf{W}$, we define non-symmetric dual $q$-Hahn polynomials and prove their recurrence and orthogonality relations from a combinatorial viewpoint.

\bigskip
\noindent
\textbf{Keywords:} 
Grassmann graph, Cherednik algebra, nil-DAHA, dual $q$-Hahn polynomial, Terwilliger algebra, Leonard system.

\hfil\break
\noindent \textbf{2010 Mathematics Subject Classification:}
05E30, 20C08, 33D45, 33D80
\end{abstract}

\section{Introduction}

In this paper, we continue to develop the link between the theory of  $Q$-polynomial distance-regular graphs and the theory of double affine Hecke algebras (DAHAs); cf. \cite{1984BanIto, 1973Delsarte, 1992Cherednik, 2013LeeLAA, 2017LeeJCTA, 2018LeeTanakaSIGMA}.
We briefly summarize our results concerning the link.  
In \cite{2013LeeLAA}, we considered a $Q$-polynomial distance-regular graph  that corresponds to $q$-Racah polynomials, at the top level (i.e. $_{4}\phi_{3}$) in the terminating branch of the $q$-Askey scheme \cite{2010KLS}.
Assuming that the graph contains a clique with maximal possible size (i.e. Delsarte clique), we introduced the generalized Terwilliger algebra $\mathbf{T}(x,C)$, which is a non-commutative semisimple matrix $\mathbb{C}$-algebra attached to every pair of a Delsarte clique $C$ and a vertex $x \in C$ of the graph.
We showed that each such pair $x,C$ gives rise to a vector space that has an irreducible module structure for both $\mathbf{T}(x,C)$ and a DAHA of type $(C^\vee_1, C_1)$, the most general  DAHA of rank one \cite{2004OblomkovIMRN}. 
In the following paper \cite{2017LeeJCTA}, we captured the non-symmetric $q$-Racah polynomials from that vector space, a discrete version of non-symmetric Askey-Wilson polynomials introduced by Sahi \cite{1999SahiAnnMath}, and gave a combinatorial interpretation for their orthogonality relations.

We note that, however, the results obtained in \cite{2013LeeLAA, 2017LeeJCTA} may remain at the purely algebraic level; because there is no known example of a (non-trivial) $Q$-polynomial distance-regular graph with large diameter (at least ten)\footnote{For small diameter, there are infinitely many examples
of bipartite $Q$-polynomial distance-regular graphs of $q$-Racah type, for which every edge is a Delsarte clique.}  that corresponds to $q$-Racah polynomials and contains a Delsarte clique.
To complement this shortcoming, in the subsequent paper \cite{2018LeeTanakaSIGMA} we dealt with the dual polar graphs as a concrete combinatorial example in the context of the theory developed in \cite{2013LeeLAA, 2017LeeJCTA}.
The dual polar graphs are a classical family of $Q$-polynomial distance-regular graphs and correspond to the dual $q$-Krawtchouk polynomials.
Applying techniques of \cite{2013LeeLAA, 2017LeeJCTA} to a dual polar graph, we obtained an irreducible module for a nil-DAHA\footnote{This nil-DAHA is isomorphic to the confluent Cherednik algebra $\mathcal{H}_{\mathrm{III}}$ that corresponds to Al-Salam-Chihara polynomials; cf. \cite{2018LeeTanakaSIGMA, 2014Mazzocco}.} of type $(C^\vee_1, C_1)$ \cite{2015CheOrrMathZ}, which is a specialization of the DAHA of type $(C^\vee_1, C_1)$.
We then captured the non-symmetric dual $q$-Krawtchouk polynomials, a discrete version of non-symmetric Al-Salam-Chihara polynomials \cite{2014Mazzocco}, from a nil-DAHA module. We also described their recurrence and orthogonality relations from a combinatorial point of view.

In the present paper, as another specific combinatorial object with strong regularity, we discuss the Grassmann graphs in the context of our study to develop the theory of  \cite{2013LeeLAA, 2017LeeJCTA} further.
The Grassmann graphs are a classical family of $Q$-polynomial distance-regular graphs and correspond to the dual $q$-Hahn polynomials which lie in between  $q$-Racah and dual $q$-Krawtchouk polynomials in the $q$-Askey scheme; see Figure \ref{q-Askey scheme}.
\begin{figure}
\centering
\scalemath{0.9}{
\begin{tikzpicture}[>=stealth,thick, every node/.style={shape=rectangle,rounded corners}, align=center, text width=16em, minimum height=1em]
    \node[draw, minimum height=2.7em] (c1) at (0,0) {\small $q$-Racah polynomials \\ ($Q$-polynomial distance-regular graphs)};
   \node[draw, minimum height=2.7em] (c3) at (0, -2) {\small Dual $q$-Hahn polynomials \\ (Grassmann graphs)};
    \node[draw, minimum height=2.7em] (c6) at (0, -4) {\small Dual $q$-Krawtchouk polynomials\\ (Dual polar graphs)};
    
    \node[draw, minimum height=2.7em] (d1) at (8.5,0) {\small The DAHA $\mathcal{H}$ of type $(C^\vee_1, C_1)$};
   \node[draw, minimum height=2.7em] (d2) at (8.5, -2) {\small The confluent Cherednik algebra $\mathcal{H}_\mathrm{V}$};
    \node[draw, minimum height=2.7em] (d3) at (8.5, -4) {\small The confluent Cherednik algebra $\mathcal{H}_\mathrm{III}$};

    \node (r1) [text width=1.5em] at (-4,0) {(${}_4\phi_3$)};
    \node (r2) [text width=1.5em] at (-4,-2) {(${}_3\phi_2$)};
    \node (r3) [text width=1.5em] at (-4, -4) {(${}_3\phi_2$)};
    
    \draw[->] (c1) to (c3);
    \draw[->] (c3) to (c6);
    \draw[->] (d1) to (d2);
    \draw[->] (d2) to (d3);
    
    \draw[<->, dashed, thick] (c1) to (d1);
    \draw[<->, dashed, thick] (c3) to (d2);
    \draw[<->, dashed, thick] (c6) to (d3);

\end{tikzpicture}}
\caption[Caption Footnoot]{Part of the $q$-Askey scheme and the corresponding (degenerate) DAHAs\footnotemark}\label{q-Askey scheme}
\end{figure}
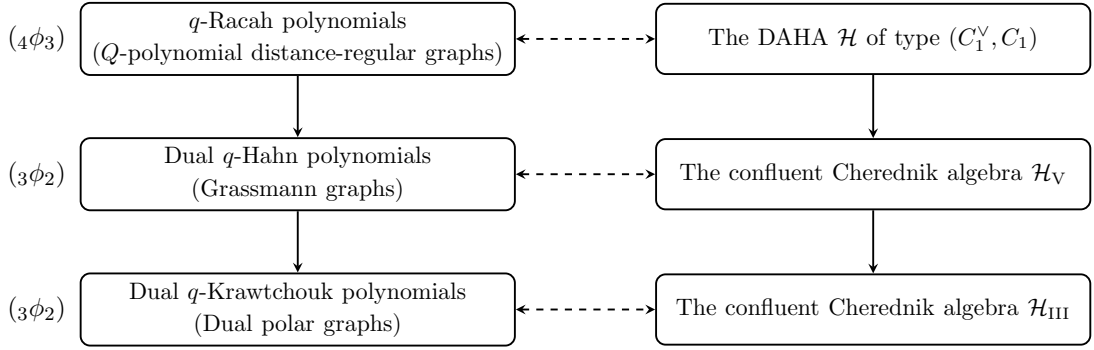
\footnotetext{Recently, the duality and its limit behavior of three families (Askey-Wilson, continuous dual $q$-Hahn, Al-Salam-Chihara) of the $q$-Askey scheme and the corresponding degenerate DAHAs were dealt with by Koornwinder and Mazzocco; cf. \cite{2018KoornMazzo}.}
The main results of this paper are as follows.
Let $J_q(N,D)$ denote a Grassmann graph, where $q$ is a prime power and $N$, $D$ are positive integers with $N \geq 2D$; see the definition in Section \ref{Section:Grassmann graphs}. 
Fix a pair of a Delsarte clique $C$ in $J_q(N,D)$ and a vertex $x$ in $C$.
Applying the methods used in \cite{2013LeeLAA, 2017LeeJCTA, 2018LeeTanakaSIGMA} to $J_q(N,D)$, we construct a $2D$-dimensional irreducible $\mathbf{T}(x,C)$-module $\mathbf{W}$ and show that $\mathbf{W}$ has a module structure for the \emph{confluent Cherednik algebra} $\mathcal{H}_\mathrm{V}$ \cite{2016Mazzocco}; cf. Definition \ref{Def:CheAlgHv}.
We then demonstrate how the $\mathbf{T}(x,C)$-action on $\mathbf{W}$ is related to the $\mathcal{H}_\mathrm{V}$-action on $\mathbf{W}$; cf. Theorems \ref{thm:1stResult} and \ref{thm:2ndResult}.
From the $\mathcal{H}_\mathrm{V}$-module $\mathbf{W}$, we obtain \emph{non-symmetric dual $q$-Hahn polynomials}, a discrete version of non-symmetric continuous dual $q$-Hahn polynomials \cite[Section 2]{2014Mazzocco}, and describe their recurrence and orthogonality relations; cf. Theorems \ref{thm:3rdResult} and \ref{thm:4thResult}. 
We should mention that all the formulas in the present paper are described in terms of the scalars $q$, $N$, and $D$, not depending on our fixed pair $x, C$.

This paper is organized as follows.
In Section \ref{Section:DRGs} we recall some preliminaries concerning $Q$-polynomial distance-regular graphs and the Terwilliger algebra.
In Section \ref{Section:Grassmann graphs} we discuss a Grassmann graph $\Gamma=J_q(N,D)$ with diameter $D\geq 3$ and its properties. 
We also discuss the Terwilliger algebra of $\Gamma$ associated with a Delsarte clique.
In Section \ref{Section:algebraT} we fix a Delsarte clique $C$ and a vertex $x$ in $C$. 
We discuss the generalized Terwilliger algebra $\mathbf{T}=\mathbf{T}(x,C)$ of $\Gamma$ and construct the so-called primary $\mathbf{T}$-module $\mathbf{W}$.
In Section \ref{Section:LS of dual q-Hahn} we discuss the theory of Leonard systems. In particular, we treat a family of Leonard systems that corresponds to dual $q$-Hahn polynomials.
In Section \ref{Section:primary T-module} we deal with four dual $q$-Hahn Leonard systems obtained from $\mathbf{W}$. 
In Section \ref{Section:Hv} we discuss the confluent Cherednik algebra $\mathcal{H}_\mathrm{V}$ and construct a $\mathbb{C}$-algebra homomorphism from $\mathcal{H}_\mathrm{V}$ to $\mathrm{End}(\mathbf{W})$, which gives an $\mathcal{H}_\mathrm{V}$-module structure on $\mathbf{W}$.
We discuss a relationship between the action of $\mathcal{H}_\mathrm{V}$ and  the action of $\mathbf{T}$ on $\mathbf{W}$.
We specialize the DAHA of type $(C^\vee_1, C_1)$ to get a nil-DAHA $\overline{\mathcal{H}}$ and discuss how $\overline{\mathcal{H}}$ is related to $\mathcal{H}_\mathrm{V}$.
In Section \ref{Section:nonsym dual q-Hahn poly} we introduce non-symmetric dual $q$-Hahn polynomials $\ell^\pm_i$ and give a combinational interpretation for $\ell^\pm_i$.
In Section \ref{Section:rec orth relations} we deal with recurrence relations and orthogonality relations for $\ell^\pm_i$.

Throughout this paper, we use the following notation.
For a non-empty finite set $X$, let $\mathrm{Mat}_X(\mathbb{C})$ denote the $\mathbb{C}$-algebra consisting of the complex square matrices indexed by $X$.
Let $\mathbb{C}^X$ denote the $\mathbb{C}$-vector space consisting of the complex column vectors indexed by $X$.
We endow $\mathbb{C}^X$ with the Hermitian inner product $\langle \cdot , \cdot \rangle = \langle \cdot , \cdot \rangle_{\mathbb{C}^X}$ which satisfies $\langle u, v \rangle = u^t\bar{v}$ for $u,v \in \mathbb{C}^X$, where $t$ denotes transpose and $\bar{ \ }$ denotes complex conjugate.
Abbreviate $\lVert u \rVert^2=\langle u, u \rangle$ for all $u \in \mathbb{C}^X$.
For $y \in X$, let $\hat{y}$ denote the vector in $\mathbb{C}^X$ with a $1$ in the $y$-coordinate and $0$ in all other coordinates.
For a subset $Y \subseteq X$, define $\hat{Y} = \sum_{y \in Y}\hat{y}$, called the \textit{characteristic vector} of $Y$.
Let $\mathbb{C}[\zeta, \zeta^{-1}]$ denote the space of Laurent polynomials in one variable $\zeta$.
A Laurent polynomial $f(\zeta)$ is said to be \emph{symmetric} if $f(\zeta)=f(\zeta^{-1})$, and \emph{non-symmetric} otherwise.
We view symmetric Laurent polynomials as ordinary polynomials in the variable $\lambda :=\zeta+\zeta^{-1}$.
Assume that $q \in \mathbb{C}^*$ is not a root of unity.
For $\alpha \in \mathbb{C}$,
\begin{equation}
	(\alpha;q)_0 :=1 \quad  \text{and} \quad (\alpha;q)_n := (1-\alpha)(1-\alpha q) \cdots (1-\alpha q^{n-1}), \quad n=1,2,3,\ldots.
\end{equation}
For $\alpha_1, \alpha_2, \ldots, \alpha_{s+1}, \beta_1, \beta_2, \ldots, \beta_{s} \in \mathbb{C}$,
\begin{equation}
	{}_{s+1}\phi_s \left( \left. \begin{array}{c} \alpha_1, \alpha_2, \ldots, \alpha_{s+1} \\ \beta_1, \beta_2,  \ldots, \beta_s \end{array}\right| q; \zeta \right) = \sum^\infty_{n=0}\frac{(\alpha_1;q)_n(\alpha_2;q)_n \cdots (\alpha_{s+1};q)_n}{(\beta_1;q)_n(\beta_2;q)_n\cdots (\beta_s;q)_n}\frac{\zeta^n}{(q;q)_n}.
\end{equation}
For integers $0 \leq m \leq n$, we denote the Gaussian binomial coefficient by
\begin{equation}
	\gauss{n}{m} = \gauss{n}{m}_q = \frac{(q^n-1)(q^{n-1}-1)\cdots(q^{n-m+1}-1)}{(q^m-1)(q^{m-1}-1)\cdots(q-1)}.
\end{equation}
We remark that if $q$ is set to a prime power then $\gauss{n}{m}$ is equal to the number of $m$-dimensional subspaces of an $n$-dimensional vector space over a finite field $\mathbb{F}_q$.
In what follows, we assume that $q$ is a prime power unless otherwise stated.

\section{Preliminaries: Distance-regular graphs}\label{Section:DRGs}

In this preliminary section, we recall some basic aspects of distance-regular graphs that we need later in the paper. 
Let $\Gamma$ be a connected simple graph with finite vertex set $X$ and diameter $D\geq 3$.
For a vertex $x$ in $X$, define
\begin{equation}\label{dist.part(x)}
	\Gamma_i(x) := \{ y \in X : \partial(x,y) = i \}, \quad 0 \leq i \leq D,
\end{equation}
where $\partial(x,y)$ is the shortest path-length distance function between $x$ and $y$.
We abbreviate $\Gamma(x)=\Gamma_1(x)$.
For an integer $k$, $\Gamma$ is said to be $k$-\emph{regular} (or \emph{regular with valency $k$}) whenever $|\Gamma(x)|=k$ for all $x$ in $X$. 
We say that $\Gamma$ is \emph{distance-regular} whenever for every $i$, $0 \leq i \leq D$, and for every pair of vertices $x$, $y$ in $X$ with $\partial(x,y)=i$, there are constant numbers $a_i$, $b_i$, $c_i$  such that 
\begin{equation}\label{IntNum.Gamma}
	c_i = | \Gamma_{i-1}(x) \cap \Gamma(y) |, \qquad
	a_i = | \Gamma_{i}(x) \cap \Gamma(y) |, \qquad
	b_i = | \Gamma_{i+1}(x) \cap \Gamma(y) |,
\end{equation}
where $\Gamma_{-1}(x)$ and $\Gamma_{D+1}(x)$ are empty sets.
Observe that $c_0=b_D=0$, $b_{i-1}c_i \ne 0$, $1\leq i \leq D$, and $c_1=1$.
Observe also that $\Gamma$ is $b_0$-regular and $a_i+b_i+c_i=b_0$ for $0 \leq i \leq D$.
The constants $a_i$, $b_i$, $c_i$ are called the \emph{intersection numbers} of $\Gamma$.

Assume that $\Gamma$ is distance-regular.
For $0 \leq i \leq D$, define the matrix $A_i$ in $\mathrm{Mat}_X(\mathbb{C})$ such that $(x,y)$-entry of $A_i$ is $1$ if $\partial(x,y)=i$ and $0$ otherwise.
We call $A_i$ the \emph{$i$-th distance matrix} of $\Gamma$.
Observe that $A_0=I$, the identity matrix in $\mathrm{Mat}_X(\mathbb{C})$.
We abbreviate $A=A_1$ and call this the \emph{adjacency matrix} of $\Gamma$.
The \emph{Bose-Mesner algebra} of $\Gamma$ is the (commutative) semisimple subalgebra $M$ of $\mathrm{Mat}_X(\mathbb{C})$ generated by $I, A, A_2,\ldots,A_D$. 
Observe that
\begin{equation*}
	AA_i = b_{i-1}A_{i-1}+a_iA_i+c_{i+1}A_{i+1}, \qquad 0 \leq i \leq D,
\end{equation*}
where we set $b_{-1}A_{-1}=0$ and $c_{D+1}A_{D+1}=0$. From this recurrence, it follows that  there is a polynomial $v_i \in \mathbb{C}[\lambda]$ of degree $i$ such that $v_i(A)=A_i$ for $0 \leq i \leq D$.
It follows that $A$ generates $M$, and that the matrices $A_i$, $0 \leq i \leq D$, form a basis for $M$.
Since $A$ is real symmetric and generates $M$, it has $D+1$ mutually distinct real eigenvalues $\theta_0, \theta_1, \ldots, \theta_D$.
We always set $\theta_0:=b_0$.
For $0 \leq i \leq D$, let $E_i \in \mathrm{Mat}_X(\mathbb{C})$ be the orthogonal projection onto the eigenspace of $\theta_i$.
Observe that $E_iE_j = \delta_{ij}E_i$, $0 \leq i, j \leq D$, and $\sum^D_{i=0}E_i=I$.
We have
\begin{equation*}
	A = \sum^D_{i=0}\theta_i E_i,
\end{equation*}
so that the matrices $E_i$, $0 \leq i \leq D$, form another basis for $M$.

We recall the $Q$-polynomial property of $\Gamma$.
The Bose-Mesner algebra $M$ of $\Gamma$ is closed under entrywise multiplication, denoted by $\circ$, since $A_i\circ A_j = \delta_{ij}A_i$, $0 \leq i,j \leq D$.
We say that $\Gamma$ is \emph{$Q$-polynomial} with respect to the ordering $E_0, E_1, \ldots, E_D$ (or $\theta_0, \theta_1, \ldots, \theta_D$) if there are scalars $a^*_i$, $b^*_i$, $c^*_i$, $0 \leq i \leq D$, such that $b^*_D=c^*_0=0$, and $b^*_{i-1}c^*_i \ne 0$ for $1\leq i \leq D$, and
\begin{equation*}
	|X| (E_1\circ E_i) = b^*_{i-1}E_{i-1} + a^*_iE_i+c^*_{i+1}E_{i+1}, \qquad 0 \leq i \leq D,
\end{equation*}
where we set $b^*_{-1}E_{-1}=0$ and $c^*_{D+1}E_{D+1}=0$.
From this recurrence, it follows that there is a polynomial $v^*_i \in \mathbb{C}[\lambda]$ of degree $i$ such that $v^*_i(E_1) = E_i$ for $0 \leq i \leq D$, where the multiplication is under $\circ$.
Write $E_1=|X|^{-1}\sum^D_{i=0}\theta^*_iA_i$. Then the scalars $\theta^*_i$, $0\leq i \leq D$, are real and mutually distinct. 
We note that $\theta^*_0 = \mathrm{trace}(E_1) = \mathrm{rank}(E_1)$.

Assume that $\Gamma$ is $Q$-polynomial with respect to the ordering $E_0, E_1, \ldots, E_D$.
Fix a vertex $x$ in $X$.
For $0 \leq i \leq D$, let $E^*_i=E^*_i(x)$ denote the diagonal matrix in $\mathrm{Mat}_X(\mathbb{C})$ with $(y,y)$-entry $1$ if $\partial(x,y)=i$ and $0$ otherwise, i.e., $E^*_i = \mathrm{diag}(A_i\hat{x})$.
Observe that $E^*_iE^*_j=\delta_{ij}E^*_i$, $0\leq i,j \leq D$, and $\sum^D_{i=0}E^*_i=I$.
The \emph{dual Bose-Mesner algebra} of $\Gamma$ with respect to $x$ is the (commutative) semisimple subalgebra $M^*=M^*(x)$ of $\mathrm{Mat}_X(\mathbb{C})$ generated by $E^*_0, E^*_1, \ldots, E^*_D$.
Note that the matrices $E^*_i$, $0 \leq i \leq D$, form a basis for $M^*$.
Let  $A^*=A^*(x)$ denote the diagonal matrix in $\mathrm{Mat}_X(\mathbb{C})$ with $(y,y)$-entry $(|X|E_1)_{xy}$ for $y\in X$, i.e., $A^* = \mathrm{diag}(|X|E_1\hat{x})$.
Then
\begin{equation}\label{A*;linComb(E*i)}
	A^* = \sum^D_{i=0}\theta^*_iE^*_i,
\end{equation}
from which it follows that $A^*$ generates $M^*$. We call $A^*$ the \emph{dual adjacency matrix} of $\Gamma$ with respect to $x$.
Observe that the scalars $\theta^*_i$ are the eigenvalues of $A^*$, called the \emph{dual eigenvalues} of $\Gamma$.
The \emph{Terwilliger algebra} (or \emph{subconstituent algebra}) of $\Gamma$ with respect to $x$ is the subalgebra $T=T(x)$ of $\mathrm{Mat}_X(\mathbb{C})$ generated by $M$, $M^*$ \cite{1992TerJACO(1), 1993TerJACO(2), 1993TerJACO(3)}.
Note that the matrices $A$, $A^*$ generate $T$.
Note also that $T$ is (non-commutative) semisimple and any two non-isomorphic irreducible $T$-modules in $\mathbb{C}^X$ are orthogonal.
The following are relations in $T$:
\begin{equation*}
	E^*_iAE^*_j = 0, \qquad  E_iA^*E_j=0 \qquad \text{if} \quad |i-j|>1,
\end{equation*}
for $0 \leq i, j \leq D$; cf. \cite[Lemma 3.2]{1992TerJACO(1)}.

We observe that the subspace $M\hat{x}$ of $\mathbb{C}^X$ has bases $\{A_i\hat{x}\}^D_{i=0}$ and $\{E_i\hat{x}\}^D_{i=0}$, and that $A_i\hat{x}=E^*_i\hat{X}$, $0 \leq i \leq D$.
It follows that $M\hat{x}$ is same as the subspace $M^*\hat{X}$ of $\mathbb{C}^X$, and therefore $M\hat{x}$ is an irreducible $T$-module, called the \emph{primary} $T$-module.
The actions of $A$, $A^*$ on $M\hat{x}$ are given as follows: for $0 \leq i \leq D$,
\begin{equation*}
	A.A_i\hat{x} = b_{i-1}A_{i-1}\hat{x}+a_iA_i\hat{x}+c_{i+1}A_{i+1}\hat{x}, \qquad   A^*.A_i\hat{x} = \theta^*_iA_i\hat{x}.
\end{equation*}
For more information regarding distance-regular graphs, we refer to \cite{1984BanIto, 1989BCN, 2016DKT}.

\section{Grassmann graphs}\label{Section:Grassmann graphs}

Recall $q$ a prime power. Let $N, D$ be positive integers with the restriction $N \geq 2D$.
Let $V$ be an $N$-dimensional vector space over a finite field $\mathbb{F}_q$, and let $X$ be the collection of $D$-dimensional subspaces of $V$.
The \emph{Grassmann graph} $J_q(N,D)$ has vertex set $X$, where two vertices are adjacent whenever their intersection has dimension $D-1$; cf. \cite[p. 268]{1989BCN}. 
We readily see that the cardinality of $X$ is $\gauss{N}{D}$.
Observe that two vertices $x,y$ have distance $i$ if and only if $\dim(x\cap y)=D-i$.
Note that $J_q(N,D)$ is isomorphic to $J_q(N,N-D)$. By our restriction on $N$ and $D$, $J_q(N,D)$ has diameter $D$.

Throughout the rest of this paper, let $\Gamma$ denote the Grassmann graph $J_q(N,D)$ with diameter $D$; to avoid trivialities we assume $D\geq 3$.
We recall some basic results that we need; cf. \cite[Section 9.3]{1989BCN}.
The graph $\Gamma$ \emph{is} distance-regular with intersection numbers given by
\begin{equation}\label{Gr.IntNum;a,b,c}
	a_i = \gauss{i}{1}\left(\gauss{i}{1}-q^{i+1}\gauss{D-i}{1}-q\gauss{N-D}{1}\right), \quad
	 b_i  = q^{2i+1}\gauss{D-i}{1}\gauss{N-D-i}{1},  \quad   
	 c_i  = \gauss{i}{1}^2, 
\end{equation}
for $0 \leq i \leq D$.
The eigenvalues of $\Gamma$ are given by

\begin{equation*}\label{Gr.Eigval}
	\theta_i = q^{i+1}\gauss{D-i}{1}\gauss{N-D-i}{1}-\gauss{i}{1}, 
\end{equation*}
for $0 \leq i \leq D$.
The graph $\Gamma$ \emph{is} $Q$-polynomial with respect to the ordering $\{\theta_i\}^D_{i=0}$ with $\theta_0 > \theta_1> \cdots> \theta_D$.
The dual eigenvalues of $\Gamma$ are given by
\begin{equation}\label{Gr.dualEigval}
	\theta^*_i = \frac{(q^N-q)(2-q^D-q^{N-D})}{(q-1)(q^D-1)(q^{N-D}-1)} + \frac{(q^N-q)(q^N-1)}{(q-1)(q^D-1)(q^{N-D}-1)}q^{-i},
\end{equation}
for $0 \leq i \leq D$; cf. \cite[Table 6.1, Theorem 8.4.1]{1989BCN}, \cite[Lemma 4.3]{2015GaGaHoLAA}.

Let $C$ be a collection of  all $D$-dimensional subspaces of $V$ containing a fixed $(D-1)$-dimensional subspace.
Then $C$ is a maximal clique\footnote{There is the other type of maximal cliques in $\Gamma$, namely, the collection of all $D$-dimensional subspaces of $V$ contained in a fixed $(D+1)$-dimensional subspace. Note that these maximal cliques are not Delsarte unless $N=2D$.} 
of $\Gamma$ and we have
\begin{equation}\label{size.C}
	|C| = \gauss{N-D+1}{1}.
\end{equation}
From this, it follows that $C$ is a \emph{Delsarte} clique, i.e., $C$ attains the Hoffman bound $1-\theta_0/\theta_D$; cf. \cite[Proposition 4.4.6]{1989BCN}.
Take a Delsarte clique $C$ of $\Gamma$.
The \emph{covering radius} of $C$ is defined by $\max\{ \partial(y,C) : y \in X \}$, where $\partial(y,C) = \min\{\partial(y,z) : z \in C\}$.
Note that the covering radius of $C$ is given by $D-1$; cf \cite[Lemma 7.4]{1993GodsilAC}.
Define
\begin{equation}\label{dist.part(C)}
	C_i := \{ y \in X :\partial(y,C) =i \}, \qquad 0 \leq i \leq D-1.
\end{equation}
For notational convenience, we set $C_{-1}:=\varnothing$ and $C_D:=\varnothing$.
We remark that $\{C_i\}^{D-1}_{i=0}$ is an \emph{equitable} partition, i.e., for all integers $i$ and $j$, $0 \leq i, j \leq D-1$, each vertex in $C_i$ has constant neighbors in $C_j$. 
In particular, for each $z\in C_i$, $0 \leq i \leq D-1$, there exist constant numbers $\widetilde a_i$, $\widetilde b_i$, $\widetilde c_i$ such that 
\begin{equation}\label{IntNum.C}
	\widetilde c_i = |\Gamma(z) \cap C_{i-1}|, \qquad 
	\widetilde a_i = |\Gamma(z) \cap C_{i}|, \qquad 
	\widetilde b_i = |\Gamma(z) \cap C_{i+1}|,
\end{equation}
where $\widetilde c_0 = \widetilde b_{D-1}=0$ and $\widetilde b_{i-1}\widetilde c_i \ne 0$ for $1 \leq i \leq D-1$. 
Observe that $\widetilde a_0 = |C|-1 = q\gauss{N-D}{1}$ and $\widetilde a_i + \widetilde b_i + \widetilde c_i = b_0$ for $0 \leq i \leq D-1$.
The constants $\widetilde a_i$, $\widetilde b_i$, $\widetilde c_i$ are called the \emph{intersection numbers} of $C$.
For $0 \leq i \leq D-1$ and $z \in C_i$, consider the subset $\{ y \in C \mid \partial(y,z)=i\}$ of $C$.
Then by the construction the cardinality of this set is given by
\begin{equation}\label{N(i)}
	n_i(z):=|\{ y \in C \mid \partial(y,z)=i\}| = \gauss{i+1}{1}, 
\end{equation}
from which it follows that the cardinality $n_i(z)$ is independent of the choice of $z$ in $C_i$, and thus we write $n_i=n_i(z)$ for $0 \leq i \leq D-1$.
By definition, we have
\begin{equation}\label{AiC=lin.combo}
	A_i\hat{C} = (|C|-n_{i-1})\hat{C}_{i-1} + n_i\hat{C}_i, \qquad 0 \leq i \leq D,
\end{equation}
where $(|C|-n_{-1})\hat{C}_{-1}=0$ and $n_D\hat{C}_D=0$.

\begin{lemma}\label{tilde;a,b,c(i)}
The intersection numbers of $C$ are given by 
\begin{align}
	& \widetilde{a}_i = \frac{1}{q-1}\left( (q^{D+1}-q^{i+1}+1)\gauss{i}{1} + q^{N-D+1}\gauss{i+1}{1} -q \gauss{2i+1}{1} \right), \label{IntNumC;a} \\
	& \widetilde{b}_i = q^{2i+2}\gauss{D-i-1}{1}\gauss{N-D-i}{1}, \qquad  \qquad 
	\widetilde{c}_i = \gauss{i+1}{1}\gauss{i}{1}, \label{IntNumC;bc}
\end{align} 
for $0 \leq i \leq D-1$.
\end{lemma}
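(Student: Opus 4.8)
The plan is to describe the sets $C_i$ explicitly in terms of the subspace defining $C$, then to parametrise the neighbours of a fixed vertex and classify them by their distance to $C$, and finally to count.

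Write $C$ as the set of all $D$-dimensional subspaces of $V$ containing a fixed $(D-1)$-dimensional subspace $U$. First I would establish that for every vertex $z$ one has $\max_{y\in C}\dim(z\cap y)=\dim(z\cap U)+1$: the inequality ``$\le$'' holds because $z\cap y\cap U=z\cap U$ and $(z\cap y)/(z\cap U)$ embeds in the one-dimensional space $y/U$, and equality is realised by any $y=U+\mathbb{F}_q w$ with $w\in z\setminus U$. Since $\partial(z,C)=D-\max_{y\in C}\dim(z\cap y)$, this identifies $C_i=\{z:\dim(z\cap U)=D-1-i\}$. (A byproduct is \eqref{N(i)}: for $z\in C_i$ the $y\in C$ at distance $i$ from $z$ are exactly those corresponding to the lines of $(z+U)/U$, a space of dimension $i+1$.)

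Next I would fix $z\in C_i$, put $m:=\dim(z\cap U)=D-1-i$, and use the standard parametrisation of the neighbours of $z$: every neighbour $z'$ corresponds uniquely to a choice of a hyperplane $W=z\cap z'$ of $z$ together with a line $\ell=z'/W$ of $V/W$ different from $z/W$, the vertex $z'$ being the preimage of $\ell$. The key computation is to track $\dim(z'\cap U)$, which I would carry out by a case split on (A) whether $z\cap U\subseteq W$ and (B) whether $\ell\subseteq(W+U)/W$: one gets $\dim(W\cap U)=m$ under (A) and $m-1$ otherwise, $\dim(z'\cap U)=\dim(W\cap U)+1$ exactly when (B) holds and $=\dim(W\cap U)$ otherwise, and — the point to be careful about — (A) is equivalent to $z/W\not\subseteq(W+U)/W$, with $(W+U)/W$ of dimension $i$ in case (A) and $i+1$ when (A) fails. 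Feeding this back into the description of $C_i$ from the first step shows that $z'\in C_{i-1}$ iff (A) and (B), that $z'\in C_{i+1}$ iff neither holds, and that $z'\in C_i$ in the two mixed cases.

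Then the counting is quick. There are $\gauss{D-m}{1}=\gauss{i+1}{1}$ hyperplanes $W$ with $z\cap U\subseteq W$, each contributing $\gauss{i}{1}$ admissible lines $\ell$ (all of $(W+U)/W$, since $z/W$ lies outside it), giving $\widetilde c_i=\gauss{i+1}{1}\gauss{i}{1}$; the remaining $\gauss{D}{1}-\gauss{i+1}{1}=q^{i+1}\gauss{D-i-1}{1}$ hyperplanes each contribute $\gauss{N-D+1}{1}-\gauss{i+1}{1}=q^{i+1}\gauss{N-D-i}{1}$ lines outside $(W+U)/W$, giving $\widetilde b_i=q^{2i+2}\gauss{D-i-1}{1}\gauss{N-D-i}{1}$. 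Finally $\widetilde a_i=b_0-\widetilde b_i-\widetilde c_i$ with $b_0=q\gauss{D}{1}\gauss{N-D}{1}$, and rewriting this in the form \eqref{IntNumC;a} is a routine manipulation of Gaussian binomial coefficients. The main obstacle is purely the bookkeeping in the case analysis for $\dim(z'\cap U)$ — in particular getting right, in each branch, whether the forbidden line $z/W$ belongs to $(W+U)/W$; everything afterwards is mechanical.
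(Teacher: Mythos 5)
Your argument is correct, but it follows a genuinely different route from the paper. The paper's proof is essentially a two-step reduction: it invokes the general relations \eqref{rels:tbc-bc} between the intersection numbers of a Delsarte clique and those of the ambient graph, namely $\widetilde b_i = \frac{q^{D-N+i}-1}{q^{D-N+i+1}-1}b_{i+1}$ and $\widetilde c_i = \frac{q^{i+1}-1}{q^i-1}c_i$, which come from \cite[Theorem 4.7]{2013LeeLAA} together with \eqref{N(i)}, then evaluates these using the known intersection numbers \eqref{Gr.IntNum;a,b,c} of $J_q(N,D)$, and finally obtains $\widetilde a_i$ from $\widetilde a_i+\widetilde b_i+\widetilde c_i=b_0$. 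You instead work directly with the subspace geometry: identifying $C_i=\{z:\dim(z\cap U)=D-1-i\}$ (which in passing re-derives \eqref{N(i)}, asserted in the paper by construction), parametrising neighbours by pairs (hyperplane $W$ of $z$, line $\ell$ of $V/W$ with $\ell\ne z/W$), and tracking $\dim(z'\cap U)$ through the two-by-two case analysis. I checked the delicate points: the equivalence of $z\cap U\subseteq W$ with $z\not\subseteq W+U$ (via $\dim(W+U)=D-1+i$ versus $D+i$ in the two cases), the resulting dimensions $i$ and $i+1$ of $(W+U)/W$, and the counts $\gauss{i+1}{1}\gauss{i}{1}$ and $q^{i+1}\gauss{D-i-1}{1}\cdot q^{i+1}\gauss{N-D-i}{1}$, all of which are right and reproduce \eqref{IntNumC;bc}; the passage to \eqref{IntNumC;a} via the sum rule is the same as the paper's. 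What your approach buys is a self-contained, first-principles verification that does not depend on the machinery of \cite{2013LeeLAA} (nor on the graph's intersection numbers $b_{i+1}$, $c_i$), at the cost of more bookkeeping; the paper's proof is shorter and makes clear that the formulas are an instance of a general Delsarte-clique phenomenon rather than something special to Grassmann graphs.
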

\begin{proof}
We recall the intersection numbers $b_i$, $c_i$ of $\Gamma$.
By \cite[Theorem 4.7]{2013LeeLAA} and \eqref{N(i)},
\begin{equation}\label{rels:tbc-bc}
	\widetilde b_i = \frac{q^{D-N+i}-1}{q^{D-N+i+1}-1}b_{i+1}, \qquad  \widetilde c_i = \frac{q^{i+1}-1}{q^i-1}c_i, \qquad 0\leq i \leq D-1.
\end{equation}
Evaluate \eqref{rels:tbc-bc} using \eqref{Gr.IntNum;a,b,c} to get \eqref{IntNumC;bc}.
To verify \eqref{IntNumC;a}, use $\widetilde a_i + \widetilde b_i + \widetilde c_i = b_0$ along with \eqref{Gr.IntNum;a,b,c}. 
\end{proof}

We recall the Terwilliger algebra associated with $C$ in the sense of Suzuki \cite{2005SuzukiJACO}.
For $0\leq i \leq D-1$, let $\widetilde E^*_i=\widetilde E^*_i(C)$ denote the diagonal matrix in $\mathrm{Mat}_X(\mathbb{C})$ with $(y,y)$-entry $1$ if $y \in C_i$ and $0$ otherwise, i.e., $\widetilde E^*_i = \mathrm{diag}(\hat{C}_i)$.
Observe that $\widetilde E^*_i \widetilde E^*_j = \delta_{ij} \widetilde E^*_i$, $0 \leq i,j \leq D-1$ and $\sum^{D-1}_{i=0}\widetilde E^*_i = I$.
The \emph{dual Bose-Mesner algebra} of $\Gamma$ with respect to $C$ is the  (commutative) semisimple subalgebra $\widetilde M^*=\widetilde M^*(C)$ of $\mathrm{Mat}_X(\mathbb{C})$ generated by $\widetilde E^*_0, \widetilde E^*_1, \ldots, \widetilde E^*_{D-1}$.
Note that the matrices $\widetilde E^*_i$, $0 \leq i \leq D-1$, form a basis for $\widetilde M^*$.
Define the diagonal matrix $\widetilde A^*=\widetilde A^*(C)$ in $\mathrm{Mat}_X(\mathbb{C})$ by 
\begin{equation*}\label{dualadjmat(C)}
	\widetilde A^* = \frac{1}{|C|}\sum_{y \in C} A^*(y) = \frac{|X|}{|C|}\mathrm{diag}(E_1\hat{C}).
\end{equation*}
Since $E_1=|X|^{-1}\sum^D_{i=0}\theta^*_iA_i$ and by \eqref{AiC=lin.combo}, we have
\begin{equation}\label{E1C=sum(tilE_i)}
	\frac{|X|}{|C|}\mathrm{diag}(E_1\hat{C}) = \sum^{D-1}_{i=0} \left(\frac{n_i\theta^*_i+(|C|-n_{i})\theta^*_{i+1}}{|C|}\right)\widetilde E^*_i.
\end{equation}
For $0 \leq i \leq D-1$, let $\widetilde \theta^*_i$ denote the coefficient of each summand of $\widetilde E^*_i$ in \eqref{E1C=sum(tilE_i)}.
By \eqref{Gr.dualEigval}, \eqref{size.C} and \eqref{N(i)}, the $\widetilde\theta^*_i$ are given by
\begin{equation}\label{Gr.dualEigval.C}
	\widetilde \theta^*_i =  \frac{(q^{N-1}-1)(q+q^2-q^{D+1}-q^{N-D+2})}{(q-1)(q^D-1)(q^{N-D+1}-1)} + \frac{(q^N-q)(q^N-1)}{(q-1)(q^D-1)(q^{N-D+1}-1)}q^{-i},
\end{equation}
for $0 \leq i \leq D-1$.
Observe that the scalars $\widetilde\theta^*_i$ are real and mutually distinct.
We write
\begin{equation}\label{tilde(A);linComb(tildeE*i)}
	\widetilde A^* = \sum^{D-1}_{i=0}\widetilde\theta^*_i \widetilde E^*_i,
\end{equation}
from which it follows that $\widetilde A^*$ generates $\widetilde M^*$.
We call $\widetilde A^*$ the \emph{dual adjacency matrix of $\Gamma$ with respect to $C$}. 
Observe that the scalars $\widetilde\theta^*_i$ are the eigenvalues of $\widetilde A^*$, called the \emph{dual eigenvalues} of $\Gamma$ with respect to $C$.
The Terwilliger algebra of $\Gamma$ with respect to $C$ is the subalgebra $\widetilde T=\widetilde T(C)$ of $\mathrm{Mat}_X(\mathbb{C})$ generated by $M, \widetilde M^*$; cf. \cite{2005SuzukiJACO}.
Note that the matrices $A$, $\widetilde A^*$ generate $\widetilde T$.
Note also that $\widetilde T$ is (non-commutative) semisimple.
The following are relations in $\widetilde T$:
\begin{equation*}
	\widetilde E^*_iA\widetilde E^*_j = 0, \qquad  E_i\widetilde A^*E_j=0 \qquad \text{if} \quad |i-j|>1,
\end{equation*}
for $0 \leq i, j \leq D$, where we set $\widetilde E^*_D=0$; cf. \cite[Section 4]{2005SuzukiJACO}.

We note that the subspace $M\hat{C}$ of $\mathbb{C}^X$ has bases $\{A_i\hat{C}\}^{D-1}_{i=0}$, $\{\hat{C}_i\}^{D-1}_{i=0}$, and $\{E_i\hat{C}\}^{D-1}_{i=0}$.
By \eqref{AiC=lin.combo} and $\hat{C}_i = \widetilde E^*_i\hat{X}$, $0\leq i \leq D-1$,
the subspace $M\hat{C}$ is same as the subspace $\widetilde M^*\hat{X}$ of $\mathbb{C}^X$, and therefore $M\hat{C}$ is an irreducible $\widetilde T$-module, called the \emph{primary} $\widetilde T$-module.
The actions of $A, \widetilde A^*$ on $M\hat{C}$ are given as follows: for $0 \leq i \leq D-1$,
\begin{equation*}
	A.\hat{C}_i = \widetilde b_{i-1}\hat{C}_{i-1} + \widetilde a_i\hat{C}_i+\widetilde c_{i+1}\hat{C}_{i+1}, \qquad   \widetilde A^*.\hat{C}_i = \widetilde\theta^*_i\hat{C}_i.
\end{equation*}

\section{The generalized Terwilliger algebra of Grassmann graphs}\label{Section:algebraT}

We continue to discuss the Grassmann graph $\Gamma=J_q(N,D)$.
Throughout the rest of the paper, we fix a Delsarte clique $C$ of $\Gamma$ and a vertex $x$ in $C$.
We recall the Terwilliger algebras $T=T(x)$ and $\widetilde T=\widetilde T(C)$ of $\Gamma$.
In this section, we treat the generalized Terwilliger algebra of $\Gamma$ associated with $x$ and $C$, and discuss its so-called primary module.

\begin{definition}[{\cite[Definition 5.20]{2013LeeLAA}}]\label{Algebra.T}
The \emph{generalized Terwilliger algebra} of $\Gamma$ with respect to $x$, $C$  is  the subalgebra $\mathbf{T}=\mathbf{T}(x,C)$ of $\mathrm{Mat}_X(\mathbb{C})$ generated by $T$, $\widetilde T$.
Note that $A$, $A^*$, $\widetilde A^*$ generate $\mathbf{T}$, where $A^*\widetilde A^* = \widetilde A^* A^*$, and that $\mathbf{T}$ is (non-commutative) semisimple.
\end{definition}

Recall two partitions $\{\Gamma_i(x)\}^D_{i=0}$ and $\{C_i\}^{D-1}_{i=0}$ of $X$ from \eqref{dist.part(x)} and \eqref{dist.part(C)}, respectively.
Using these, we define a new partition $\{C^\pm_i\}^{D-1}_{i=0}$ of $X$ by
\begin{equation}\label{new.partition}
	C^-_i := C_i \cap \Gamma_{i}(x), \qquad  
	C^+_i := C_i \cap \Gamma_{i+1}(x), \qquad 0 \leq i \leq D-1.
\end{equation}
See Figure \ref{2-dim'l partition}.
For notational convenience, we set $C^-_{-1}=C^+_{-1}=\varnothing$ and $C^-_{D}=C^+_{D}=\varnothing$.
Observe that $C_i = C^-_i \cup C^+_i$, $0 \leq i \leq D-1$, and $\Gamma_i(x) = C^+_{i-1} \cup C^-_i$, $0 \leq i \leq D$.
In particular, $x=C^-_0$ and $C=C^-_0 \cup C^+_0$.
From this and \eqref{size.C}, it easily follows that $|C^+_0| = \frac{q(q^{N-D}-1)}{q-1}$.
\begin{figure}
\centering
\scalemath{0.65}{
\begin{tikzpicture}
  [scale=1,thick,auto=left,every node/.style={circle,draw}] 
  \node (n1) at (0,0) {$C^-_0$};
  \node (n2) at (0,-2)  {$C^+_0$};
  \node (n3) at (2,-2)  {$C^-_1$};
  \node (n4) at (2,-4) {$C^+_1$};
  \node (n5) at (4,-4)  {$C^-_2$};
  \node (n6) at (4,-6)  {$C^+_2$};   
  \node (n7) at (6,-6)  {$C^-_3$};
  \node (n8) at (6,-8)  {$C^+_3$};
 
  \node[fill=black!10] (c0) at (0,2.5) {${~}C^{~}_0$};
  \node[fill=black!10] (c1) at (2,2.5) {${~}C^{~}_1$};
  \node[fill=black!10] (c2) at (4,2.5) {${~}C^{~}_2$};
  \node[fill=black!10] (c3) at (6,2.5) {${~}C^{~}_3$};
  
  \node[fill=blue!20] (r0) at (-2.5,0) {${~}\Gamma_0$};
  \node[fill=blue!20] (r1) at (-2.5,-2) {${~}\Gamma_1$};
  \node[fill=blue!20] (r2) at (-2.5,-4) {${~}\Gamma_2$};
  \node[fill=blue!20] (r3) at (-2.5,-6) {${~}\Gamma_3$};
  \node[fill=blue!20] (r4) at (-2.5,-8) {${~}\Gamma_4$};

  \foreach \from/\to in {n1/n2,n2/n3,n3/n4,n4/n5,n5/n6,n6/n7,n7/n8,n1/n3,n3/n5,n5/n7,n2/n4,n4/n6,n6/n8, c0/c1,c1/c2,c2/c3, r0/r1, r1/r2, r2/r3, r3/r4}
    \draw (\from) -- (\to)[line width=0.7mm] ;
    
    \draw (c0) -- (n1) [dashed];
    \draw (c1) -- (n3) [dashed];
    \draw (c2) -- (n5) [dashed];
    \draw (c3) -- (n7) [dashed];
        
    \draw (r0) -- (n1) [dashed];
    \draw (r1) -- (n2) [dashed];
    \draw (r2) -- (n4) [dashed];
    \draw (r3) -- (n6) [dashed];
    \draw (r4) -- (n8) [dashed];

\end{tikzpicture}}
\caption{The partition $\{C^{\pm}_i\}^{D-1}_{i=0}$ of $X$ when $D=4$}\label{2-dim'l partition}
\end{figure}
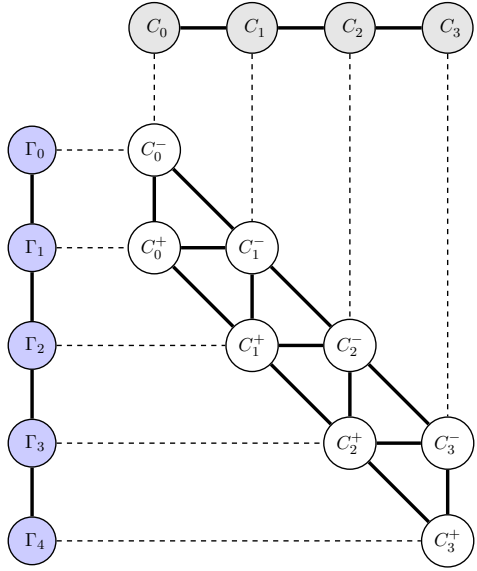
\begin{lemma}\label{cardinality:C+-}
For $0 \leq i \leq D-1$, the cardinality of each cell $C^{\pm}_i$ is given by
\begin{align*}
	|C^-_i| & = q^{i(i+1)}\prod^i_{j=1}  \frac{(q^{D-j}-1)(q^{N-D+1-j}-1)}{(q^j-1)^2}, \\
	|C^+_i| & = \frac{q^{(i+1)^2}(q^{N-D}-1)}{q-1}\prod^i_{j=1}  \frac{(q^{D-j}-1)(q^{N-D-j}-1)}{(q^j-1)(q^{j+1}-1)}.
\end{align*}
In particular, each of $C^\pm_i$ is non-empty.
\end{lemma}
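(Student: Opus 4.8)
The plan is to count each cell $C^{\pm}_i$ directly using the definition \eqref{new.partition} and linear algebra over $\mathbb{F}_q$. Fix the $(D-1)$-dimensional subspace $U$ whose overspaces form $C$, and recall $x\in C$ is a $D$-dimensional space containing $U$. A vertex $z\in X$ lies in $C_i$ precisely when $\min\{\partial(z,y):y\in C\}=i$, i.e. $\max\{\dim(z\cap y):y\supseteq U,\ \dim y=D\}=D-i$; a short computation shows this equals $i=D-1-\dim(z\cap U)$ when $z\not\supseteq U$, and $i=D-\dim(z\cap U)$ when... more carefully, one shows $\partial(z,C)$ depends only on $\dim(z\cap U)$ and whether $z\supseteq U$, giving $\dim(z\cap U)=D-1-i$ for generic $z\in C_i$. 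Then $z\in C^-_i$ versus $C^+_i$ is distinguished by $\partial(x,z)=D-\dim(z\cap x)$ being $i$ or $i+1$, i.e. by $\dim(z\cap x)=D-i$ or $D-i-1$. Since $x\supseteq U$ and $\dim(x/U)=1$, one has $\dim(z\cap x)\in\{\dim(z\cap U),\dim(z\cap U)+1\}$, so $C^-_i$ consists of those $z$ with $\dim(z\cap U)=D-1-i$ and $z\cap x\supsetneq z\cap U$, while $C^+_i$ consists of those with $\dim(z\cap U)=D-1-i$ and $z\cap x=z\cap U$.

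The first step is therefore to justify these dictionary identities precisely; this is the step I expect to be the main obstacle, since one must check that the distance-to-$C$ really is governed by $\dim(z\cap U)$ and handle the boundary case $z\supseteq U$ (where $z\in C$, i.e. $i=0$) separately. Once the dictionary is in place, the second step is a standard Gaussian-binomial count: to build $z\in C^-_i$, choose the subspace $W:=z\cap U$ of dimension $D-1-i$ inside $U\cong\mathbb{F}_q^{D-1}$ in $\gauss{D-1}{D-1-i}$ ways, then extend $W$ to $z$ of dimension $D$ inside $V\cong\mathbb{F}_q^N$ subject to $z\cap U=W$ and $z\cap x\supsetneq W$; the latter two conditions translate into counting $(i+1)$-dimensional complements of $W$ in $z$ that meet $U$ only in $0$ and meet $x$ nontrivially, which is a product of the number of such subspaces in $V/W$. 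For $C^+_i$ one instead requires $z\cap x=W$, giving a complementary count. Each of these is a routine application of the formula for the number of subspaces of $\mathbb{F}_q^m$ of given dimension in general position with a fixed subspace (equivalently, counting via a flag and dividing by stabilizer orders), and the products telescope into the stated closed forms.

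Alternatively — and this is the shortcut I would actually write up — one can avoid the ab initio count by using the equitable partitions already established. The numbers $|C_i|=|C^-_i|+|C^+_i|$ are determined recursively from the intersection numbers $\widetilde a_i,\widetilde b_i,\widetilde c_i$ of Lemma \ref{tilde;a,b,c(i)} via $|C_{i+1}|\,\widetilde c_{i+1}=|C_i|\,\widetilde b_i$, starting from $|C_0|=|C|=\gauss{N-D+1}{1}$; and within $C_i$, the split into $C^-_i$ and $C^+_i$ is governed by the intersection numbers of $\Gamma$ relative to $x$ restricted to the $C$-partition, namely each $z\in C^-_i$ has the same number of $\Gamma$-neighbors in $C^+_i$ (this follows from $A,A^*,\widetilde A^*$ all lying in $\mathbf{T}$ and preserving the relevant flags), yielding a second recursion $|C^+_i|\cdot(\text{down-rate})=|C^-_i|\cdot(\text{cross-rate})$. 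Solving the two coupled recursions gives $|C^\pm_i|$; one then verifies by induction on $i$ that the proposed closed forms satisfy the same recursions and initial conditions $|C^-_0|=1$, $|C^+_0|=q(q^{N-D}-1)/(q-1)$ (the latter recorded just before the lemma). Non-emptiness is immediate from the closed forms, since every factor $q^{D-j}-1$, $q^{N-D+1-j}-1$, $q^{N-D-j}-1$ with $1\le j\le i\le D-1$ is positive under the hypotheses $N\ge 2D$ and $D\ge 3$. The main obstacle in this route is bookkeeping: correctly identifying the cross-rate $|\Gamma(z)\cap C^+_i|$ for $z\in C^-_i$ and $|\Gamma(z)\cap C^-_i|$ for $z\in C^+_i$ in terms of $q,N,D$, which requires the same local linear-algebra analysis as above but only at distance one.
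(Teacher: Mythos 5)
Your second route is the right genre---the paper's own proof is likewise a double-counting/induction argument from intersection numbers---but as written it has a genuine gap: the constants that would actually produce the stated $q$-formulas are never derived. You reduce everything to the cross-rates $|\Gamma(z)\cap C^+_i|$ for $z\in C^-_i$ and $|\Gamma(z)\cap C^-_i|$ for $z\in C^+_i$ and then stop, yourself flagging their identification as ``the main obstacle''; moreover, the only justification you offer for these rates being constant on the cells---that $A$, $A^*$, $\widetilde A^*$ lie in $\mathbf{T}$---is circular at this point of the paper, since the $\mathbf{T}$-module structure on $\mathbf{W}$ is obtained later precisely \emph{from} the equitability of $\{C^\pm_i\}$. (In fact no fresh linear algebra over $\mathbb{F}_q$ is needed: a neighbor of $z\in C^-_i$ lying in $\Gamma_{i+1}(x)$ is in $C^+_i\cup C^-_{i+1}$ and its neighbors in $C_{i+1}$ lie in $\Gamma_{i+1}(x)$, whence $|\Gamma(z)\cap C^+_i|=b_i-\widetilde b_i$ for $z\in C^-_i$ and, symmetrically, $|\Gamma(z)\cap C^-_i|=c_{i+1}-\widetilde c_i$ for $z\in C^+_i$, all read off from \eqref{Gr.IntNum;a,b,c} and Lemma \ref{tilde;a,b,c(i)}.) Your first route, the direct subspace count via $\dim(z\cap U)$ and $\dim(z\cap x)$, is viable and its dictionary ($z\in C_i$ iff $\dim(z\cap U)=D-1-i$, with the sign determined by whether $z\cap x$ strictly contains $z\cap U$) is correct, but the Gaussian-binomial count that is supposed to ``telescope into the stated closed forms'' is asserted, not carried out, so neither route as written verifies the formulas in the lemma.

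For comparison, the paper avoids the within-column cross-rates entirely: it double counts edges between $C^-_i$ and $C^-_{i+1}$ and between $C^+_i$ and $C^+_{i+1}$, using that all neighbors of $z\in C^-_i$ lying in $C_{i+1}$ are automatically in $\Gamma_{i+1}(x)$ (and the dual observations), to get $\widetilde b_i\,|C^-_i|=c_{i+1}\,|C^-_{i+1}|$ and $b_{i+1}\,|C^+_i|=\widetilde c_{i+1}\,|C^+_{i+1}|$; these involve only the already computed numbers \eqref{Gr.IntNum;a,b,c} and \eqref{IntNumC;bc}, and induction from $|C^-_0|=1$, $|C^+_0|=q(q^{N-D}-1)/(q-1)$ yields the closed forms. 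To turn your proposal into a proof you would need either to perform the $\mathbb{F}_q$-count in route one, or to derive the cross-rates (or, more simply, the paper's two diagonal recursions) explicitly and then run the induction you describe.
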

\begin{proof}
Since $\Gamma$ is distance-regular and the partition $\{C_i\}^{D-1}_{i=0}$ is equitable, by \eqref{IntNum.Gamma}, \eqref{IntNum.C}, and \eqref{new.partition}, it follows
\begin{equation}\label{pf:|C-|,|C+|;eq}
	\widetilde b_i |C^-_i| = c_{i+1}|C^-_{i+1}|, \qquad b_{i+1}|C^+_i| = \widetilde c_{i+1}|C^+_{i+1}|, \qquad 0 \leq i \leq D-2.
\end{equation}
Evaluate \eqref{pf:|C-|,|C+|;eq} using \eqref{Gr.IntNum;a,b,c}, \eqref{IntNumC;bc}  and use induction on $i$ with $|C^-_0|=1$ and $|C^+_0| = \frac{q(q^{N-D}-1)}{q-1}$.
\end{proof}

We remark that from \eqref{new.partition} it turns out that the partition $\{C^\pm_{i}\}^{D-1}_{i=0}$ \emph{is} equitable; cf. \cite[Lemmas 5.1, 5.2]{2013LeeLAA}.
Let $\mathbf{W}$ be a subspace of $\mathbb{C}^X$ spanned by the set
\begin{equation}\label{Basis.W}
	\mathcal{C}:=\{\hat{C}^-_0, \hat{C}^+_0,\hat{C}^-_1, \hat{C}^+_1, \ldots, \hat{C}^-_{D-1}, \hat{C}^+_{D-1}\}.
\end{equation}
Observe that $\mathcal{C}$ is an orthogonal ordered basis for $\mathbf{W}$.
Since $\{C^\pm_i\}^{D-1}_{i=0}$ is equitable, $\mathbf{W}$ is $A$-invariant. 
Moreover, by the construction of \eqref{new.partition}, $\mathbf{W}$ is a module for both $M^*$ and $\widetilde M^*$.
Therefore, $\mathbf{W}$ is a $\mathbf{T}$-module.
Note that the $\mathbf{T}$-module $\mathbf{W}$ is generated by $\hat{x}$ since 
\begin{equation*}
	E^*_i\widetilde E^*_i\mathbf{J}\hat{x} = \hat{C}^-_i, \qquad 
	E^*_{i+1}\widetilde E^*_i\mathbf{J}\hat{x} = \hat{C}^+_i, \qquad
	0 \leq i \leq D-1,
\end{equation*}
where $\mathbf{J}=\sum^D_{i=0}A_i$ and observe that $\mathbf{J}\hat{x}=\hat{X}$.
\begin{lemma}[{cf. \cite[Proposition 5.25]{2013LeeLAA}}]
The $\mathbf{T}$-module $\mathbf{W}$ is irreducible.
\end{lemma}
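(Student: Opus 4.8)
The plan is to combine the semisimplicity of $\mathbf{T}$ with the rigidity of the $A^{*}$-eigenvalues on $\mathbf{W}$. Since $\mathbf{T}$ is semisimple and $\mathbf{W}$ is a finite-dimensional $\mathbf{T}$-module, I would first write $\mathbf{W}=\bigoplus_{j}W_{j}$ as a direct sum of irreducible $\mathbf{T}$-submodules; it then suffices to show that the generator $\hat{x}$ of $\mathbf{W}$ lies in a single summand $W_{j_{0}}$, for then $\mathbf{W}=\mathbf{T}\hat{x}\subseteq W_{j_{0}}\subseteq\mathbf{W}$ forces $\mathbf{W}=W_{j_{0}}$.

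The key step is to compute the action of the dual adjacency matrix $A^{*}$ on the basis $\mathcal{C}$. Since $C^{-}_{i}\subseteq\Gamma_{i}(x)$ and $C^{+}_{i}\subseteq\Gamma_{i+1}(x)$ by \eqref{new.partition}, we have $E^{*}_{j}\hat{C}^{-}_{i}=\delta_{ij}\hat{C}^{-}_{i}$ and $E^{*}_{j}\hat{C}^{+}_{i}=\delta_{i+1,j}\hat{C}^{+}_{i}$, so that \eqref{A*;linComb(E*i)} gives
\begin{equation*}
	A^{*}\hat{C}^{-}_{i}=\theta^{*}_{i}\hat{C}^{-}_{i},\qquad A^{*}\hat{C}^{+}_{i}=\theta^{*}_{i+1}\hat{C}^{+}_{i},\qquad 0\leq i\leq D-1.
\end{equation*}
Thus the elements of $\mathcal{C}$ are $A^{*}$-eigenvectors whose $A^{*}$-eigenvalues lie among $\theta^{*}_{0},\theta^{*}_{1},\ldots,\theta^{*}_{D}$, and since these $D+1$ dual eigenvalues are mutually distinct (which is visible from the explicit formula \eqref{Gr.dualEigval}), the value $\theta^{*}_{0}$ is attained only on $\hat{C}^{-}_{0}=\hat{x}$. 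Hence the $\theta^{*}_{0}$-eigenspace of the restriction of $A^{*}$ to $\mathbf{W}$ is exactly $\mathbb{C}\hat{x}$.

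Next I would use this to locate $\hat{x}$. Write $\hat{x}=\sum_{j}w_{j}$ with $w_{j}\in W_{j}$. Each $W_{j}$ is $A^{*}$-invariant because $A^{*}\in\mathbf{T}$, so comparing $A^{*}\hat{x}=\theta^{*}_{0}\hat{x}$ with $\sum_{j}A^{*}w_{j}$ and using the uniqueness of the decomposition yields $A^{*}w_{j}=\theta^{*}_{0}w_{j}$ for every $j$. Thus each nonzero $w_{j}$ lies in the one-dimensional space $\mathbb{C}\hat{x}$; as the $W_{j}$ meet pairwise in $0$ and $\hat{x}\neq 0$, exactly one $w_{j}$ is nonzero, so $\hat{x}\in W_{j_{0}}$ for some $j_{0}$. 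Recalling from the discussion just before the statement that $E^{*}_{i}\widetilde{E}^{*}_{i}\mathbf{J}\hat{x}=\hat{C}^{-}_{i}$ and $E^{*}_{i+1}\widetilde{E}^{*}_{i}\mathbf{J}\hat{x}=\hat{C}^{+}_{i}$, so that $\mathbf{W}=\mathbf{T}\hat{x}$, we obtain $\mathbf{W}=W_{j_{0}}$, which is irreducible.

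I expect the only substantive input to be the distinctness of the $D+1$ dual eigenvalues $\theta^{*}_{i}$, recorded in Section~\ref{Section:Grassmann graphs}; granting that, the remainder is formal. An alternative argument, in the spirit of \cite[Proposition 5.25]{2013LeeLAA}, would instead note that any $\mathbf{T}$-submodule of $\mathbf{W}$ is invariant under the commuting matrices $A^{*}$ and $\widetilde{A}^{*}$, whose $2D$ joint eigenvalue pairs $(\theta^{*}_{i},\widetilde{\theta}^{*}_{i})$ and $(\theta^{*}_{i+1},\widetilde{\theta}^{*}_{i})$ on $\mathbf{W}$ are mutually distinct, so the submodule is spanned by a subset of $\mathcal{C}$; one would then invoke that $\{C^{\pm}_{i}\}^{D-1}_{i=0}$ is equitable with all relevant intersection numbers and cell sizes positive (Lemmas~\ref{tilde;a,b,c(i)} and \ref{cardinality:C+-}) to check that the $A$-action links every element of $\mathcal{C}$ to every other. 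The eigenspace argument above avoids this last combinatorial bookkeeping, which is where I would expect the only real care to be needed in that route.
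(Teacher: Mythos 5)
Your proof is correct. It shares the paper's overall skeleton (decompose $\mathbf{W}$ into irreducible $\mathbf{T}$-summands by semisimplicity, show $\hat{x}$ lies in a single summand, then conclude from $\mathbf{W}=\mathbf{T}\hat{x}$), but the mechanism you use to locate $\hat{x}$ is different from, and a bit heavier than, the paper's. The paper simply picks a summand $\mathbf{W}_0$ not orthogonal to $\hat{x}$ and applies $E^*_0\in\mathbf{T}$: since $E^*_0$ is the diagonal projection onto the $x$-coordinate, $E^*_0\mathbf{W}_0$ already contains a nonzero multiple of $\hat{x}$, so $\hat{x}\in\mathbf{W}_0$ with no computation on the basis $\mathcal{C}$ and no appeal to the dual eigenvalues. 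You instead show that $\theta^*_0$ is a multiplicity-one eigenvalue of $A^*$ on $\mathbf{W}$ with eigenspace $\mathbb{C}\hat{x}$ (in effect deriving the $A^*$-action of Lemma \ref{lem:actionMatA*,wtA*} ahead of time, which causes no circularity since it follows directly from \eqref{A*;linComb(E*i)} and \eqref{new.partition}), and then use directness of the decomposition; this needs the mutual distinctness of $\theta^*_0,\ldots,\theta^*_D$ as an extra input, though that is recorded in Section \ref{Section:DRGs} and visible from \eqref{Gr.dualEigval}. The two arguments are close relatives---on $\mathbf{W}$ the restriction of $E^*_0$ is precisely the spectral projection of $A^*$ onto its $\theta^*_0$-eigenspace---so yours trades the paper's one-line projection trick for a small amount of eigenvalue bookkeeping; your concluding alternative sketch via joint $(A^*,\widetilde A^*)$-eigenvalues and the $A$-action is the route of \cite[Proposition 5.25]{2013LeeLAA} and is also viable but, as you note, requires the combinatorial nonvanishing checks that both the paper's proof and your main argument avoid.
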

\begin{proof}
By semisimplicity of $\mathbf{T}$, $\mathbf{W}$ decomposes into an orthogonal direct sum of irreducible $\mathbf{T}$-modules.
Among such modules, take one, denoted by $\mathbf{W}_0$, which is not orthogonal to $\hat{x}$.
Then $E^*_0\mathbf{W}_0$ contains $\hat{x}$, from which it follows that the irreducible $\mathbf{T}$-module $\mathbf{W}_0$ contains $\hat{x}$.
Since the $\mathbf{T}$-module $\mathbf{W}$ is generated by $\hat{x}$, we have $\mathbf{W}=\mathbf{W}_0$. The result follows.
\end{proof}
We remark that the irreducible $\mathbf{T}$-module $\mathbf{W}$ is generated by $\hat{C}$ as well. We call $\mathbf{W}$ the \emph{primary} $\mathbf{T}$-module.
We describe the action of $\mathbf{T}$ on the basis $\mathcal{C}$ for $\mathbf{W}$.
Note that $\hat{C}^\pm_{-1}=0$ and $\hat{C}^\pm_{D}=0$.
\begin{lemma}\label{lem:actionAdjMatA}
The action of $A$ on $\hat{C}^\pm_i$, $0 \leq i \leq D-1$, is given by	
\begin{align*}
	A.\hat{C}^-_i 
	& = q^{2i}\gauss{D-i}{1}\gauss{N-D+1-i}{1}\hat{C}^-_{i-1} + q^{2i}\gauss{D-i}{1}\hat{C}^+_{i-1} \\
	& \qquad +  \left(q\gauss{D}{1}\gauss{N-D}{1} - q^{2i+1}\gauss{D-i}{1}\gauss{N-D-i}{1} - \gauss{i+1}{1}\gauss{i}{1} \right) \hat{C}^-_{i} \\
	& \qquad + q^i\gauss{i+1}{1} \hat{C}^+_{i} + \gauss{i+1}{1}^2 \hat{C}^-_{i+1}, \\
	A.\hat{C}^+_i
	& = 	q^{2i+1}\gauss{D-i}{1}\gauss{N-D-i}{1}\hat{C}^+_{i-1} + q^{2i+1}\gauss{N-D-i}{1} \hat{C}^-_{i} \\
	& \qquad + \left(q\gauss{D}{1}\gauss{N-D}{1} - q^{2i+2}\gauss{D-1-i}{1}\gauss{N-D-i}{1} - \gauss{i+1}{1}^2 \right)\hat{C}^+_{i} \\
	& \qquad +  q^{i+1}\gauss{i+1}{1} \hat{C}^-_{i+1} + \gauss{i+2}{1}\gauss{i+1}{1} \hat{C}^+_{i+1}.
	\end{align*}
\end{lemma}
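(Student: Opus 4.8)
The plan is to compute $A.\hat C^\pm_i$ directly from the combinatorial structure, by counting, for a fixed vertex $z$ in each cell, how many of its neighbors land in each of the nearby cells $C^\pm_{i-1}$, $C^\pm_i$, $C^\pm_{i+1}$. Since the partition $\{C^\pm_i\}^{D-1}_{i=0}$ is equitable (as noted just before the lemma, citing \cite[Lemmas 5.1, 5.2]{2013LeeLAA}), these counts do not depend on the choice of $z$, so each $A.\hat C^\pm_i$ is indeed a linear combination of the seven basis vectors $\hat C^\pm_{i-1},\hat C^\pm_i,\hat C^\pm_{i+1}$ with the stated coefficients as the branching numbers. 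The relations $E^*_jAE^*_k=0$ and $\widetilde E^*_jA\widetilde E^*_k=0$ for $|j-k|>1$ immediately kill all other cells: a neighbor of $z\in C^-_i=C_i\cap\Gamma_i(x)$ can only lie in $\Gamma_{i-1}(x)\cup\Gamma_i(x)\cup\Gamma_{i+1}(x)$ and in $C_{i-1}\cup C_i\cup C_{i+1}$, hence in one of $C^+_{i-1},C^-_i,C^-_{i+1},C^+_i$ (note $C^-_{i-1}$ sits in $\Gamma_{i-1}(x)\cap C_{i-1}$, which is distance $i-1$ from $z$ only through $C^+_{i-1}$, so it does not appear — consistent with the asserted formula, which has no $\hat C^-_{i-1}$ term); similarly a neighbor of $z\in C^+_i=C_i\cap\Gamma_{i+1}(x)$ lies in one of $C^+_{i-1},C^-_i,C^+_i,C^-_{i+1},C^+_{i+1}$, again matching the five terms in the formula.

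Next I would identify each coefficient with a known intersection number or with a ratio thereof, rather than recomputing from scratch. The four "diagonal-ish" pieces are forced: for $z\in C^-_i$ the total neighbor count is $b_0=q\gauss{D}{1}\gauss{N-D}{1}$, so the $\hat C^-_i$-coefficient is $b_0$ minus the five other branch counts; likewise for $z\in C^+_i$. The cleanest route to the off-diagonal counts is to use the already-established branching within the coarser partitions. Going "down" in $\Gamma$-distance and "down" in $C$-distance simultaneously from $z\in C^-_i$ counts neighbors in $\Gamma_{i-1}(x)\cap C_{i-1}=C^+_{i-1}$: this is a sub-count of $c_i=\gauss{i}{1}^2$ neighbors in $\Gamma_{i-1}(x)$, and one checks it equals $q^{2i}\gauss{D-i}{1}$ using the geometry of the subspaces (the neighbor must drop dimension of intersection with $z$ and move closer to $C$). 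The count of neighbors of $z\in C^-_i$ in $C^-_{i-1}$ would be $c_i$ minus that, but $C^-_{i-1}\subseteq\Gamma_{i-1}(x)$ is at distance $i-1$ from $z$ only via $\Gamma_{i-1}$; the subtlety is that $\Gamma_{i-1}(x)=C^+_{i-2}\cup C^-_{i-1}$, and a neighbor of $z$ in $\Gamma_{i-1}(x)$ can lie in either, so in fact $c_i=|\Gamma(z)\cap C^+_{i-1}|+|\Gamma(z)\cap C^-_{i-1}|$ and the formula is asserting $|\Gamma(z)\cap C^-_{i-1}|=0$. That vanishing is exactly the statement that from a vertex at distance $i$ from $x$ and distance $i$ from $C$, one cannot reach a vertex at distance $i$ from $x$ and distance $i-1$ from $C$ in one step — a parity/triangle-inequality fact that should follow from the covering-radius-$(D-1)$ geometry and the definition \eqref{new.partition}. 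I would isolate this as the key combinatorial lemma.

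To pin down the remaining off-diagonal counts cleanly, I would exploit the two primary modules already in hand: $A.A_i\hat x$ on the primary $T$-module and $A.\hat C_i$ on the primary $\widetilde T$-module give the $\Gamma$-intersection numbers $b_{i-1},a_i,c_{i+1}$ and the $C$-intersection numbers $\widetilde b_{i-1},\widetilde a_i,\widetilde c_{i+1}$ from Lemma \ref{tilde;a,b,c(i)}, respectively. Summing the action of $A$ over a cell: applying $A$ to $\hat C_i=\hat C^-_i+\hat C^+_i$ and comparing with the $\widetilde T$-action $A.\hat C_i=\widetilde b_{i-1}\hat C_{i-1}+\widetilde a_i\hat C_i+\widetilde c_{i+1}\hat C_{i+1}$ gives three scalar constraints relating the six unknown off-diagonal coefficients at level $i$ to $\widetilde b_{i-1},\widetilde a_i,\widetilde c_{i+1}$; similarly, $E^*_{i+1}A E^*_i$ restricted to $\mathbf W$ involves only the "$\Gamma$-distance increasing" branches and must reproduce $b_i$ after summing over the relevant cells, and $E^*_{i}A E^*_{i+1}$ reproduces $c_{i+1}$. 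Together with the explicit $q^{2i}\gauss{D-i}{1}$-type counts from the direct subspace argument, these linear relations over-determine and hence verify all coefficients. The final step is the routine algebraic identification of the resulting expressions with those in \eqref{Gr.IntNum;a,b,c}, \eqref{IntNumC;bc}, and the cardinalities of Lemma \ref{cardinality:C+-}, using $\widetilde b_i|C^-_i|=c_{i+1}|C^-_{i+1}|$ and $b_{i+1}|C^+_i|=\widetilde c_{i+1}|C^+_{i+1}|$ from \eqref{pf:|C-|,|C+|;eq} to convert between "number of neighbors in a cell" and "ratio of cell sizes"; I expect the main obstacle to be the vanishing claim $|\Gamma(z)\cap C^-_{i-1}|=0$ for $z\in C^-_i$ together with its companion vanishings (e.g. no $\hat C^-_{i+1}$-free anomalies), i.e. correctly bookkeeping which of the naively-possible seven neighbors are actually empty, since everything else is bounded-degree linear algebra and Gaussian-binomial manipulation.
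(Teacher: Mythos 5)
Your overall plan (express each branching coefficient through the known intersection numbers of $\Gamma$ and of $C$, then simplify) is the same strategy as the paper's, but your execution contains a concrete error that derails it. You assert that the formula for $A.\hat{C}^-_i$ ``has no $\hat{C}^-_{i-1}$ term'' and you isolate as your key lemma the vanishing $|\Gamma(z)\cap C^-_{i-1}|=0$ for $z\in C^-_i$. Both claims are false. The very first term of the stated formula is $q^{2i}\gauss{D-i}{1}\gauss{N-D+1-i}{1}\hat{C}^-_{i-1}$, i.e.\ $\widetilde b_{i-1}\hat{C}^-_{i-1}$; and for $z\in C^-_i$ with $i\geq 1$, every one of the $c_i=\gauss{i}{1}^2$ neighbors $w$ of $z$ with $\partial(x,w)=i-1$ lies in $C^-_{i-1}$ (indeed $\partial(C,w)\leq\partial(x,w)=i-1$ since $x\in C$, and $\partial(C,w)\geq\partial(C,z)-1=i-1$), so $|\Gamma(z)\cap C^-_{i-1}|=c_i\neq 0$. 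The genuine vanishing is the absence of a $\hat{C}^+_{i+1}$ term in $A.\hat{C}^-_i$: a vertex of $C^+_{i+1}$ lies in $\Gamma_{i+2}(x)$ and hence has no neighbor in $C^-_i\subseteq\Gamma_i(x)$. The source of the mistake is the misidentification $\Gamma_{i-1}(x)\cap C_{i-1}=C^+_{i-1}$; by \eqref{new.partition} this intersection is $C^-_{i-1}$, while $C^+_{i-1}=\Gamma_i(x)\cap C_{i-1}$. Any bookkeeping built on these identifications produces the wrong support for $A.\hat{C}^-_i$ and contradicts the formula you are trying to prove, so the argument does not go through as written.

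A second, related slip concerns the direction of counting: the coefficient of $\hat{C}^\varepsilon_j$ in $A.\hat{C}^\delta_i$ is the number of neighbors \emph{in $C^\delta_i$ of a vertex of $C^\varepsilon_j$}, not the number of neighbors of a vertex of $C^\delta_i$ lying in $C^\varepsilon_j$ (the two counts differ by the ratio $|C^\delta_i|/|C^\varepsilon_j|$); for example the coefficient $\gauss{i+1}{1}^2$ of $\hat{C}^-_{i+1}$ in $A.\hat{C}^-_i$ is $c_{i+1}$, counted from a vertex of $C^-_{i+1}$ into $C^-_i$. Once the cells and directions are straightened out, the correct route (and the paper's) is short and does not need your auxiliary constraints from $A.\hat{C}_i$ or from \eqref{pf:|C-|,|C+|;eq}: for a vertex of each ``output'' cell one computes its number of neighbors in the ``input'' cell by inclusion--exclusion with $b_i$, $c_i$, $\widetilde a_i$, $\widetilde b_i$, $\widetilde c_i$ --- e.g.\ in $A.\hat{C}^-_i$ the coefficients of $\hat{C}^-_{i-1}$, $\hat{C}^+_{i-1}$, $\hat{C}^-_i$, $\hat{C}^+_i$, $\hat{C}^-_{i+1}$ are $\widetilde b_{i-1}$, $\widetilde b_{i-1}-b_i$, $\widetilde a_i-b_i+\widetilde b_i$, $c_{i+1}-\widetilde c_i$, $c_{i+1}$, respectively --- and then substitutes \eqref{Gr.IntNum;a,b,c} and Lemma \ref{tilde;a,b,c(i)}. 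You would need to replace your ``key combinatorial lemma'' by the correct vanishing statements and redo the cell identifications before the remaining linear algebra and Gaussian-binomial simplification can be carried out.
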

\begin{proof}
From the structure of \eqref{new.partition}, we routinely find both
\begin{align*}
	A.\hat{C}^-_i & = \widetilde b_{i-1}\hat{C}^-_{i-1} + (\widetilde b_{i-1}-b_i)\hat{C}^+_{i-1} + (\widetilde a_i-b_i+\widetilde b_i) \hat{C}^-_i + (c_{i+1}-\widetilde c_i)\hat{C}^+_i + c_{i+1}\hat{C}^-_{i+1},\\ 
	A.\hat{C}^+_i & = b_{i}\hat{C}^+_{i-1} + (b_{i}-\widetilde b_i)\hat{C}^-_{i} + (\widetilde a_i-c_{i+1}+\widetilde c_i) \hat{C}^+_i + (\widetilde c_{i+1}- c_{i+1})\hat{C}^-_{i+1} + \widetilde c_{i+1}\hat{C}^+_{i+1}, 
\end{align*}
for $0 \leq i \leq D-1$.
Evaluate these equations using \eqref{Gr.IntNum;a,b,c} and Lemma \ref{tilde;a,b,c(i)}. The result follows.
\end{proof}

\begin{lemma}\label{lem:actionMatA*,wtA*}
The actions of $A^*$, $\widetilde A^*$ on $\hat{C}^\pm_i$, $0 \leq i \leq D-1$, are given by
\begin{align*}
	&&& A^*.\hat{C}^-_i = \theta^*_i \hat{C}^-_i, && A^*.\hat{C}^+_i = \theta^*_{i+1}\hat{C}^+_i, &&\\
	&&& \widetilde A^*.\hat{C}^-_i = \widetilde \theta^*_i \hat{C}^-_i, && \widetilde A^*.\hat{C}^+_i = \widetilde \theta^*_{i}\hat{C}^+_i, &&
\end{align*}
where $\theta^*_i$ are from \eqref{Gr.dualEigval} and $\widetilde\theta^*_i$ are from \eqref{Gr.dualEigval.C}.
\end{lemma}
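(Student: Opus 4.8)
The plan is to simply unwind the definitions of $A^*$ and $\widetilde A^*$ as diagonal matrices and read off the diagonal entries on the cells $C^\pm_i$ of the partition \eqref{new.partition}. Both matrices are diagonal, so each characteristic vector $\hat C^\pm_i=\sum_{y\in C^\pm_i}\hat y$ is automatically an eigenvector; the only thing to check is which eigenvalue is attached to which cell, and this is exactly the distance bookkeeping built into \eqref{new.partition}.

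First I would recall from \eqref{A*;linComb(E*i)} that $A^*=\sum_{j=0}^{D}\theta^*_jE^*_j$ with $E^*_j=\mathrm{diag}(A_j\hat x)$, so the $(y,y)$-entry of $A^*$ equals $\theta^*_j$ precisely when $\partial(x,y)=j$. By the definition \eqref{new.partition}, every $y\in C^-_i$ satisfies $\partial(x,y)=i$ and every $y\in C^+_i$ satisfies $\partial(x,y)=i+1$; hence $A^*\hat C^-_i=\sum_{y\in C^-_i}\theta^*_i\hat y=\theta^*_i\hat C^-_i$ and $A^*\hat C^+_i=\theta^*_{i+1}\hat C^+_i$. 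Likewise, from \eqref{tilde(A);linComb(tildeE*i)} we have $\widetilde A^*=\sum_{j=0}^{D-1}\widetilde\theta^*_j\widetilde E^*_j$ with $\widetilde E^*_j=\mathrm{diag}(\hat C_j)$, so the $(y,y)$-entry of $\widetilde A^*$ equals $\widetilde\theta^*_j$ precisely when $y\in C_j$, i.e. when $\partial(y,C)=j$. Since $C^-_i\cup C^+_i=C_i$ by \eqref{new.partition}, both $\hat C^-_i$ and $\hat C^+_i$ lie in the $\widetilde\theta^*_i$-eigenspace of $\widetilde A^*$, giving $\widetilde A^*\hat C^\pm_i=\widetilde\theta^*_i\hat C^\pm_i$. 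The explicit formulas for $\theta^*_i$ and $\widetilde\theta^*_i$ are already recorded in \eqref{Gr.dualEigval} and \eqref{Gr.dualEigval.C}, so nothing further is needed.

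There is essentially no obstacle here: the statement is immediate once one observes that $A^*$ and $\widetilde A^*$ are diagonal and that the cells $C^\pm_i$ are, by construction, homogeneous with respect to $\partial(x,\cdot)$ and $\partial(\cdot,C)$ respectively. If anything, the only point worth spelling out is the consistency check that the two partitions $\{\Gamma_i(x)\}$ and $\{C_i\}$ refine the partition $\{C^\pm_i\}$ in the manner claimed, i.e. $\Gamma_i(x)=C^+_{i-1}\cup C^-_i$ and $C_i=C^-_i\cup C^+_i$, which was already noted just after \eqref{new.partition}.
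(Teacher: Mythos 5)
Your proposal is correct and follows exactly the paper's argument: the paper's proof simply cites \eqref{A*;linComb(E*i)} and \eqref{tilde(A);linComb(tildeE*i)}, which is the same observation you spell out, namely that $A^*$ and $\widetilde A^*$ are diagonal and each cell $C^\pm_i$ is contained in a single eigenspace by the construction \eqref{new.partition}. Nothing is missing; you have merely made the ``immediate'' step explicit.
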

\begin{proof}
Immediate from \eqref{A*;linComb(E*i)} and \eqref{tilde(A);linComb(tildeE*i)}.
\end{proof}

\section{Leonard systems of dual $q$-Hahn type}\label{Section:LS of dual q-Hahn} 

In this section, we discuss a family of Leonard systems said to have dual $q$-Hahn type and some properties we need in the paper.
We begin by recalling the notion of Leonard systems \cite{2001TerLAA}.
Let $\sfd$ be a non-negative integer and let $\mathsf V$ be a $\mathbb{C}$-vector space with dimension $\sfd+1$.
Assume that the element $\mathsf A \in \mathrm{End}(\mathsf V)$ is \emph{multiplicity-free}, i.e., $\mathsf A$ has $\sfd+1$ mutually distinct eigenvalues $\mytheta_0, \mytheta_1, \ldots, \mytheta_\sfd$.
For $0\leq i \leq \sfd$, define $\mathsf E_i \in \mathrm{End}(\mathsf V)$ such that
$$
\mathsf E_i=\prod_{\substack{ 0 \le j \le \mathsf{d} \\ j\ne i} } \frac{\mathsf A-\mytheta_j\mathsf I}{\mytheta_i-\mytheta_j},
$$
where $\mathsf I$ is the identity of $\mathrm{End}(\mathsf V)$.
Observe that (i) $\mathsf A\mathsf E_i = \mytheta_i\mathsf E_i$, $0 \leq i \leq \sfd$,  (ii) $\mathsf E_i\mathsf E_j = \delta_{ij}\mathsf E_i$, $0 \leq i,j \leq \sfd$, and (iii) $\sum^{\sfd}_{i=0} \mathsf E_i = \mathsf I$.
We call $\mathsf E_i$ the \emph{primitive idempotent} of $\mathsf A$ associated with $\mytheta_i$.

\begin{definition}[{\cite[Definition 1.4]{2001TerLAA}}]\label{Def:LS}
By a \emph{Leonard system} on $\mathsf V$, we mean a sequence
\begin{equation}\label{def:LS}
	\mathsf \Phi = \{ \mathsf A, \mathsf A^*, \{\mathsf E_i\}^{\sfd}_{i=0}, \{\mathsf E^*_i\}^{\sfd}_{i=0} \}
\end{equation}
of elements in $\mathrm{End}(\mathsf{V})$ that satisfy (i)--(iii) below.
\begin{itemize}
	\item[(i)] Each of $\mathsf{A}, \mathsf{A}^*$ is multiplicity-free in $\mathrm{End}(\mathsf{V})$.
	\item[(ii)] $\{\mathsf E_i\}^{\sfd}_{i=0}$ (resp. $\{\mathsf E^*_i\}^{\sfd}_{i=0}$) is an ordering of the primitive idempotents of $\mathsf A$ (resp. $\mathsf A^*$).
	\item[(iii)] For $0 \leq i, j \leq \sfd$, both
	\begin{equation}
		\mathsf E_i \mathsf A^* \mathsf E_j = \begin{cases} 0 & \textrm{ if } |i-j|>1, \\ \ne 0 & \textrm{ if } |i-j|=1,  \end{cases}
		\quad \text{ and } \quad 
		\mathsf E^*_i \mathsf A \mathsf E^*_j = \begin{cases} 0 & \textrm{ if } |i-j|>1, \\ \ne 0 & \textrm{ if } |i-j|=1.  \end{cases} \qquad 
	\end{equation}
\end{itemize}
We call $\sfd$ the \emph{diameter} of $\mathsf \Phi$. 
\end{definition}
\begin{note}
In a common notational convention, $\mathsf A^*$ denotes the conjugate-transpose of $\mathsf A$.
We are not using this convention. 
The elements $\mathsf A, \mathsf A^*$ in $\eqref{def:LS}$ are arbitrary subject to (i)--(iii) above.
\end{note}
Let $\mathsf \Phi=\{ \mathsf A, \mathsf A^*, \{\mathsf E_i\}^{\sfd}_{i=0}, \{\mathsf E^*_i\}^{\sfd}_{i=0} \}$ be a Leonard system on $\mathsf V$.
Let $\mathsf \Phi'$ be a Leonard system on a $(\sfd+1)$-dimensional  $\mathbb{C}$-vector space $\mathsf V'$.
We say that $\mathsf \Phi'$ is \emph{isomorphic} to $\mathsf \Phi$ if there is a $\mathbb{C}$-algebra isomorphism $\sigma: \mathrm{End}(\mathsf{V}) \to \mathrm{End}(\mathsf{V'})$ such that $\mathsf \Phi'=\mathsf \Phi^\sigma = \{ \mathsf A^\sigma, \mathsf A^{*\sigma}, \{\mathsf E^\sigma_{i}\}^{\sfd}_{i=0}, \{\mathsf E^{*\sigma}_i\}^{\sfd}_{i=0} \}$.
Consider two sequences
\begin{equation}
	\mathsf \Phi^*=\{ \mathsf A^*, \mathsf A, \{\mathsf E^*_{i}\}^{\sfd}_{i=0}, \{\mathsf E_i\}^{\sfd}_{i=0} \}, \qquad
	\mathsf \Phi^{\Downarrow}=\{ \mathsf A, \mathsf A^*, \{\mathsf E_{\sfd-i}\}^{\sfd}_{i=0}, \{\mathsf E^*_i\}^{\sfd}_{i=0} \}
\end{equation}
Then both $\mathsf \Phi^*$ and $\mathsf \Phi^{\Downarrow}$ satisfy the conditions (i)--(iii) in Definition \ref{Def:LS}, and thus they are Leonard systems on $\mathsf V$.
For $0 \leq i \leq \sfd$, let $\mytheta_i$ (resp. $\mytheta^*_i$) be an eigenvalue of $\mathsf A$ (resp. $\mathsf A^*$).
Then there exists nonzero scalars $\myvarphi_i$, $1 \leq i \leq \sfd$, and an isomorphism of $\mathbb{C}$-algebras $\natural$ from $\mathrm{End}(\mathsf{V})$ to the full matrix algebra $\mathbb{C}^{(\sfd+1)\times(\sfd+1)}$ such that (cf. \cite[Theorem 3.2]{2001TerLAA})
\begin{equation}
	A^\natural = \begin{bmatrix}
	\mytheta_0 & & &  & {\mathbf 0} \\[0.2em] 
	1 & \mytheta_1 & \\[0.2em] 
	& 1 & \mytheta_2 &  \\
	& & \ddots & \ddots \\[0.2em] 
	{\mathbf 0}& & & 1 & \mytheta_\sfd
	\end{bmatrix},
	\qquad
	A^{*\natural} = \begin{bmatrix}
	\mytheta^*_0 & \myvarphi_1 & &  & {\mathbf 0} \\[0.2em] 
	& \mytheta^*_1 & \myvarphi_2\\
	& & \mytheta^*_2 & \ddots \\
	& & & \ddots & \myvarphi_\sfd\\[0.2em] 
	{\mathbf 0}& & & & \mytheta^*_\sfd
	\end{bmatrix}.
\end{equation}
We call the sequence $\{\myvarphi_i\}^{\sfd}_{i=1}$ the \emph{first split sequence} of $\mathsf \Phi$.
Let $\{\myphi_i\}^{\sfd}_{i=1}$ denote the first split sequence of $\mathsf \Phi^{\Downarrow}$ and call this the \emph{second split sequence} of $\mathsf \Phi$. 
By the \emph{parameter array} of $\mathsf \Phi$, we mean the sequence
\begin{equation}\label{Def:PA}
	( \{\mytheta_i\}^{\sfd}_{i=0}, \{\mytheta^*_i\}^{\sfd}_{i=0}, \{\myvarphi_i\}^{\sfd}_{i=1}, \{\myphi_i\}^{\sfd}_{i=1} ).
\end{equation}
Take a non-zero vector $u$ in $\mathsf E_0 \mathsf V$.
Then the set $\{\mathsf E^*_iu\}^{\sfd}_{i=0}$ forms a \emph{$\mathsf \Phi$-standard basis}\footnote{Dually, we can consider a $\mathsf \Phi^*$-standard basis  $\{\mathsf E_iu^*\}^{\sfd}_{i=0}$ for $\mathsf V$ with a non-zero $u^* \in \mathsf E^*_0\mathsf V$.} for $\mathsf V$, i.e., the set $\{\mathsf E^*_iu\}^{\sfd}_{i=0}$ satisfies both (i) $\mathsf E^*_iu \in \mathsf E^*_i\mathsf V$, $0\leq i \leq \sfd$; (ii) $\sum^\sfd_{i=0}E^*_iu \in \mathsf E_0\mathsf V$.
Applying $\mathsf A$ to $\mathsf E^*_iu$ and using Definition \ref{Def:LS}(iii), there exist the scalars $\sfa_i$, $\sfb_i$, $\sfc_i$, $0 \leq i \leq \sfd$, the so-called \emph{intersection numbers} of $\mathsf\Phi$, such that
$\sfb_\sfd=\sfc_0=0$, $\sfb_{i-1}\sfc_i \ne 0$, $1\leq i \leq \sfd$, and
\begin{equation}\label{eq;3-term.AE*i}
	\mathsf A \mathsf E^*_i u = \sfb_{i-1}\mathsf E^*_{i-1}u + \sfa_i\mathsf E^*_i u + \sfc_{i+1}\mathsf E^*_{i+1}u,
\end{equation}
where $\sfb_{-1}\mathsf E^*_{-1}u=0$ and $\sfc_{\sfd+1}\mathsf E^*_{\sfd+1}u=0$.
Note that $\sfa_i+\sfb_i+\sfc_i = \mytheta_0$ for $0 \leq i \leq \sfd$.
The intersection numbers $\sfb_i$ and $\sfc_i$ are given in terms of the parameter array \eqref{Def:PA} by (cf. \cite[Theorem 17.7]{2004TerLAA})
\begin{align}
	\sfb_i &= \myvarphi_{i+1}\frac{(\mytheta^*_i-\mytheta^*_0)(\mytheta^*_i-\mytheta^*_1)\cdots(\mytheta^*_i-\mytheta^*_{i-1})}{(\mytheta^*_{i+1}-\mytheta^*_0)(\mytheta^*_{i+1}-\mytheta^*_1)\cdots (\mytheta^*_{i+1}-\mytheta^*_i)}, \qquad 0 \leq i \leq \sfd-1,  \label{i.n.Phi;b} \\
	\sfc_i &= \myphi_{i}\frac{(\mytheta^*_i-\mytheta^*_{i+1})(\mytheta^*_i-\mytheta^*_{i+2})\cdots(\mytheta^*_i-\mytheta^*_{\sfd})}{(\mytheta^*_{i-1}-\mytheta^*_i)(\mytheta^*_{i-1}-\mytheta^*_{i+1})\cdots (\mytheta^*_{i-1}-\mytheta^*_\sfd)}, \qquad 1 \leq i \leq \sfd. \label{i.n.Phi;c}
\end{align}
Using the intersection numbers $\sfa_i$, $\sfb_i$, $\sfc_i$, define a sequence of polynomials $\{\mathsf v_i\}^{\sfd}_{i=0}$ in $\mathbb{C}[\lambda]$ as follows: 
\begin{equation}\label{eq:poly.v(i)}
	\mathsf v_0:=1, \qquad
	\lambda \mathsf v_i = \sfb_{i-1}\mathsf v_{i-1} + \sfa_i\mathsf v_i + \sfc_{i+1}\mathsf v_{i+1}, \quad 0 \leq i \leq \sfd-1,
\end{equation}
where $\sfb_{-1}\mathsf v_{-1}=0$.
Observe that $\deg(\mathsf v_i)=i$  for  $0 \leq i \leq \sfd$ since $\mathsf c_j \ne 0$, $1 \leq j \leq \sfd$.
We say that the polynomial $\mathsf v_i$ is \emph{associated with} $\mathsf\Phi$.
By \eqref{eq;3-term.AE*i}, it follows
\begin{equation}\label{vi(A)E*i}
	\mathsf v_i(\mathsf A). \mathsf E^*_0 u=\mathsf E^*_iu, \qquad 0 \leq i \leq \sfd.
\end{equation}
We normalize the polynomial $\mathsf v_i$ by setting 
\begin{equation}\label{poly:f}
	\mathsf f_i:=\mathsf v_i/\mathsf k_i, \qquad 0 \leq i \leq \sfd,
\end{equation}
where $\mathsf k_i = \sfb_0\sfb_1\cdots \sfb_{i-1}/\sfc_1\sfc_2\cdots\sfc_i$.
Then it turns out that (cf. \cite[Theorem 17.4]{2004TerLAA})
\begin{equation}\label{poly;f}
	\mathsf f_i(\lambda) = \sum^i_{n=0} \frac{(\mytheta^*_i-\mytheta^*_0)(\mytheta^*_i-\mytheta^*_1)\cdots (\mytheta^*_i-\mytheta^*_{n-1})(\lambda-\theta_0)\cdots (\lambda-\theta_{n-1})}{\myvarphi_1\myvarphi_2\cdots \myvarphi_n}, \qquad 0 \leq i \leq \sfd.
\end{equation}

The Leonard system is uniquely determined up to isomorphism by the parameter array, cf. \cite[Theorem 1.9]{2001TerLAA}, and all families of the parameter arrays of Leonard systems are displayed in \cite{2005TerDCC} as parametric form.
We now recall the dual $q$-Hahn family of Leonard systems.
For the rest of this section, assume  that $q$ is a nonzero scalar such that $q^i \ne 1$ for $1 \leq i \leq D$.

\begin{definition}[{\cite[Example 5.5]{2005TerDCC}}]\label{Def:LS.dualqHahn}
Let $\mathsf \Phi$ be a Leonard system on $\mathsf V$ with diameter $\sfd$.
Let the sequence \eqref{Def:PA} be the parameter array of $\mathsf\Phi$.
Then $\mathsf \Phi$ is said to have \emph{dual $q$-Hahn} type if there exist scalars $\sfa$, $\sfa^*$, $\sfb$, $\sfb^*$, $\sfc$, $\sfr$ such that 
\begin{align*}
	\mytheta_i  = \sfa+\sfb q^{-i} + \sfc q^i, \qquad \qquad
	\mytheta^*_i  = \sfa^* + \sfb^*q^{-i},
\end{align*}
for $0 \leq i \leq \sfd$, and
\begin{align*}
	\myvarphi_i & = \sfb\sfb^*q^{1-2i}(1-q^i)(1-q^{i-\sfd-1})(1-\sfr q^i), \\
	\myphi_i & = \sfc\sfb^* q^{\sfd+1-2i}(1-q^i)(1-q^{i-\sfd-1})(1-\sfb\sfr\sfc^{-1}q^{i-\sfd}),
\end{align*}
for $1 \leq i \leq \sfd$, where $\mathsf b$, $\mathsf b^*$,  $\mathsf c$, $\mathsf r$ are nonzero\footnote{In the case $\mathsf r=0$, the Leonard system $\mathsf \Phi$ has dual $q$-Krawtchouk type; cf. \cite[Definition 5.2]{2018LeeTanakaSIGMA}} and neither of $\mathsf rq^i$, $\mathsf c \mathsf b^{-1} \mathsf r^{-1} q^{i-1}$ is equal to $1$ for $1\leq i \leq \mathsf d$.
We call $(\sfa, \sfa^*, \sfb, \sfb^*,\sfc, \sfr; q, \sfd)$ the \emph{parameter sequence of $\mathsf\Phi$}.
\end{definition}
From now on, let $\mathsf \Phi$ be a Leonard system of dual $q$-Hahn type as in Definition \ref{Def:LS.dualqHahn}.
From \eqref{i.n.Phi;b}, \eqref{i.n.Phi;c}, the intersection numbers of $\mathsf \Phi$ are given by
\begin{equation}\label{IntNum(dualqHahn)}
	\sfb_i  = \sfb(1-q^{i-\sfd})(1-\sfr q^{i+1}), 
	\qquad
	\sfc_i  = (1-q^{i})(\sfc-\sfb\sfr q^{i-\sfd}), 
\end{equation}
for $0 \leq i \leq \sfd$.
Evaluate \eqref{poly;f} at $\lambda=\mytheta_j$ using Definition \ref{Def:LS.dualqHahn}. 
Then we get (cf. \cite[Example 5.5]{2005TerDCC})
\begin{equation}
	\mathsf f_i(\mytheta_j) 
	= {}_3\phi_2 \left( \left. \begin{array}{c} q^{-i}, \ q^{-j}, \ \mathsf t^2 q^j \\ \sfr q, \ q^{-\sfd} \end{array}\right| q, q\right), \qquad 0 \leq i, j \leq \sfd,
\end{equation}
where
\begin{equation}\label{scalar;t.b.c}
	\mathsf t^2 = \sfb^{-1}\sfc.
\end{equation}
The polynomials $\mathsf f_i$ form the \emph{dual $q$-Hahn polynomials} \cite[Section 14.7]{2010KLS} in a variable $\lambda(x) = \sfa+\sfb q^{-x} + \sfc q^x$.
For notational convenience, fix a square root $\mathsf t$ of $\mathsf t^2$.
Set $x = \log_q(\mathsf t^{-1}\zeta)$ in $\lambda(x)$ so that
\begin{equation}
	\lambda=\lambda(\log_q(\mathsf t^{-1}\zeta))=\sfa + \sfb\mathsf{t}\zeta^{-1} + \sfc\mathsf{t}^{-1}\zeta.
\end{equation}
We renormalize $\mathsf f_i(\lambda)$ by setting
\begin{equation}\label{eq;poly.h(i)}
	\mathsf h_i(\zeta) 
	:= \frac{(\sfr q;q)_i(q^{-\sfd};q)_i}{\mathsf{t}^i} \mathsf f_i(\lambda) 
	= \frac{(\sfr q;q)_i(q^{-\sfd};q)_i}{\mathsf{t}^i} {}_3\phi_2 \left( \left. \begin{array}{c} q^{-i}, \ \mathsf{t}\zeta^{-1}, \ \mathsf{t}\zeta \\ \sfr q, \ q^{-\sfd} \end{array}\right| q, q\right),
\end{equation}
for  $0 \leq i \leq \sfd$.
We note that $\mathsf h_i(\zeta)$ are \emph{monic} symmetric Laurent polynomials in a variable $\zeta$, i.e., the coefficient of their highest degree term in $\zeta$ is one, and note also that the $\mathsf h_i(\zeta)$ has the highest degree  $i$ and the lowest degree $-i$.
Since $\mathsf h_i(\zeta)$ depends on the parameters $\mathsf b$, $\mathsf c$, $\mathsf r$, $\mathsf d$, and $q$, we write 
\begin{equation}\label{LaurentPoly;h(i)}
	\mathsf h_i = \mathsf h_i(\zeta) := \mathsf h_i(\zeta; \mathsf b, \mathsf c,\mathsf r, \sfd; q), \qquad 0 \leq i \leq \sfd,
\end{equation}
and say that $\mathsf{h}_i$ is \emph{associated with} $\mathsf\Phi$.

\begin{lemma}
Let $\widetilde{\mathsf V}$ be a $\mathbb{C}$-vector space containing $\mathsf V$ as a subspace.
Let $\mathsf X$ be an invertible element of $\mathrm{End}(\widetilde{\mathsf V})$ such that $\mathsf V$ is invariant under $\mathsf X+\mathsf X^{-1}$.
Suppose that the action of $\mathsf A$ on $\mathsf V$ is same as the action of $\sfa + \sfb\mathsf t(\mathsf X + \mathsf X^{-1})= \sfa + \sfb\mathsf{t}\mathsf X^{-1} + \sfc\mathsf{t}^{-1}\mathsf X$ on $\mathsf V$, where $\mathsf A$ is an element of $\mathsf\Phi$ as in Definition \ref{Def:LS.dualqHahn} and $\sfa$, $\sfb$, $\sfc$ are parameters of $\mathsf\Phi$ and $\mathsf t$ is from \eqref{scalar;t.b.c}. Then, on $\mathsf V$
\begin{equation}\label{hi(X)=vi(A)}
	\mathsf h_i(\mathsf X) = \mathsf t^i(q;q)_i(\mathsf t^{-2}\sfr q^{1-\sfd};q)_i\mathsf v_i(\mathsf A), \qquad 0 \leq i \leq \sfd,
\end{equation}
where $\mathsf h_i$ and $\mathsf v_i$ are from \eqref{LaurentPoly;h(i)} and \eqref{eq:poly.v(i)}, respectively.
Moreover, for a non-zero vector $u \in \mathsf E_0\mathsf V$,
\begin{equation}\label{hi(X)=vi(A).E*iu}
	\mathsf h_i(\mathsf X).\mathsf E^*_0u = \mathsf t^i(q;q)_i(\mathsf t^{-2}\sfr q^{1-\sfd};q)_i\mathsf E^*_iu, \qquad 0 \leq i \leq \sfd.
\end{equation}
\end{lemma}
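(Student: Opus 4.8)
The plan is to establish \eqref{hi(X)=vi(A)} as an identity of operators on $\mathsf{V}$, and then deduce \eqref{hi(X)=vi(A).E*iu} by applying both sides to $\mathsf{E}^*_0 u$ and invoking \eqref{vi(A)E*i}. For the operator identity, the starting point is that, on $\mathsf{V}$, the hypothesis says $\mathsf{A}$ acts as $\sfa + \sfb\mathsf t\,\mathsf X^{-1} + \sfc\mathsf t^{-1}\mathsf X$; since $\mathsf{V}$ is invariant under $\mathsf X+\mathsf X^{-1}$, it is invariant under any polynomial in $\mathsf X+\mathsf X^{-1}$, hence under any symmetric Laurent polynomial in $\mathsf X$, and in particular under $\mathsf h_i(\mathsf X)$. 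So both sides of \eqref{hi(X)=vi(A)} make sense as elements of $\mathrm{End}(\mathsf V)$. Now recall from \eqref{eq;poly.h(i)} that $\mathsf h_i(\zeta) = \tfrac{(\sfr q;q)_i(q^{-\sfd};q)_i}{\mathsf t^i}\,\mathsf f_i(\lambda)$ where $\lambda = \sfa + \sfb\mathsf t\zeta^{-1} + \sfc\mathsf t^{-1}\zeta$, and from \eqref{poly:f} that $\mathsf f_i = \mathsf v_i/\mathsf k_i$ with $\mathsf k_i = \sfb_0\cdots\sfb_{i-1}/\sfc_1\cdots\sfc_i$. Since $\mathsf h_i$ is a symmetric Laurent polynomial in $\zeta$, it equals $\mathsf f_i$ evaluated at $\lambda$ times a scalar, and substituting $\zeta \mapsto \mathsf X$ corresponds exactly to substituting $\lambda \mapsto \sfa + \sfb\mathsf t\mathsf X^{-1} + \sfc\mathsf t^{-1}\mathsf X = \mathsf A|_{\mathsf V}$. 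Therefore, on $\mathsf V$,
\begin{equation*}
	\mathsf h_i(\mathsf X) = \frac{(\sfr q;q)_i(q^{-\sfd};q)_i}{\mathsf t^i}\,\mathsf f_i(\mathsf A) = \frac{(\sfr q;q)_i(q^{-\sfd};q)_i}{\mathsf t^i\,\mathsf k_i}\,\mathsf v_i(\mathsf A).
\end{equation*}

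It remains to check that the scalar $\tfrac{(\sfr q;q)_i(q^{-\sfd};q)_i}{\mathsf t^i\,\mathsf k_i}$ equals $\mathsf t^i(q;q)_i(\mathsf t^{-2}\sfr q^{1-\sfd};q)_i$. This is the one genuine computation in the proof. Using the dual $q$-Hahn intersection numbers \eqref{IntNum(dualqHahn)}, namely $\sfb_j = \sfb(1-q^{j-\sfd})(1-\sfr q^{j+1})$ and $\sfc_j = (1-q^j)(\sfc - \sfb\sfr q^{j-\sfd})$, I would compute $\mathsf k_i = \prod_{j=0}^{i-1}\sfb_j \big/ \prod_{j=1}^{i}\sfc_j$. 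The numerator is $\sfb^i \prod_{j=0}^{i-1}(1-q^{j-\sfd})(1-\sfr q^{j+1}) = \sfb^i (q^{-\sfd};q)_i(\sfr q;q)_i$. The denominator is $\prod_{j=1}^{i}(1-q^j)\cdot\prod_{j=1}^{i}(\sfc - \sfb\sfr q^{j-\sfd}) = (q;q)_i \cdot \sfc^i\prod_{j=1}^{i}(1 - \sfb\sfc^{-1}\sfr q^{j-\sfd})$. Recalling $\mathsf t^2 = \sfb^{-1}\sfc$ from \eqref{scalar;t.b.c}, we have $\sfb\sfc^{-1} = \mathsf t^{-2}$ and $\sfc = \sfb\mathsf t^2$, so the last product is $\prod_{j=1}^{i}(1 - \mathsf t^{-2}\sfr q^{j-\sfd}) = (\mathsf t^{-2}\sfr q^{1-\sfd};q)_i$. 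Assembling, $\mathsf k_i = \sfb^i (q^{-\sfd};q)_i(\sfr q;q)_i \big/ \big( (q;q)_i\,\sfc^i\,(\mathsf t^{-2}\sfr q^{1-\sfd};q)_i \big)$, and hence $\tfrac{(\sfr q;q)_i(q^{-\sfd};q)_i}{\mathsf t^i\,\mathsf k_i} = \tfrac{\sfc^i (q;q)_i (\mathsf t^{-2}\sfr q^{1-\sfd};q)_i}{\mathsf t^i \sfb^i} = \mathsf t^{2i}\mathsf t^{-i}(q;q)_i(\mathsf t^{-2}\sfr q^{1-\sfd};q)_i = \mathsf t^i(q;q)_i(\mathsf t^{-2}\sfr q^{1-\sfd};q)_i$, as claimed, using $\sfc^i/\sfb^i = \mathsf t^{2i}$. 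This proves \eqref{hi(X)=vi(A)}.

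For \eqref{hi(X)=vi(A).E*iu}, simply apply both sides of \eqref{hi(X)=vi(A)} to the vector $\mathsf E^*_0 u$, where $u \in \mathsf E_0\mathsf V$ is nonzero; the right side becomes $\mathsf t^i(q;q)_i(\mathsf t^{-2}\sfr q^{1-\sfd};q)_i\,\mathsf v_i(\mathsf A).\mathsf E^*_0 u$, and by \eqref{vi(A)E*i} we have $\mathsf v_i(\mathsf A).\mathsf E^*_0 u = \mathsf E^*_i u$, which gives the stated formula. The only point requiring a word of care is the legitimacy of the substitution $\zeta \mapsto \mathsf X$: one must note that $\mathsf h_i(\mathsf X)$ is unambiguously defined because $\mathsf h_i$ is a Laurent polynomial and $\mathsf X$ is invertible, and that "substituting a symmetric Laurent polynomial in $\mathsf X$" agrees with "substituting the corresponding ordinary polynomial in $\mathsf X + \mathsf X^{-1}$" — this is what lets us pass from the identity $\mathsf h_i(\zeta) = \text{const}\cdot\mathsf f_i(\lambda(\zeta))$ in $\mathbb{C}[\zeta,\zeta^{-1}]$ to the operator identity on $\mathsf V$, using that $\mathsf A$ acts as $\lambda(\mathsf X)$ there. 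I expect the main (and only real) obstacle to be bookkeeping in the $q$-Pochhammer manipulation above; everything else is formal.
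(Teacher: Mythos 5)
Your proposal is correct and follows essentially the same route as the paper: both start from \eqref{eq;poly.h(i)} to write $\mathsf h_i(\mathsf X)=\frac{(\sfr q;q)_i(q^{-\sfd};q)_i}{\mathsf t^i}\mathsf f_i(\mathsf A)$ on $\mathsf V$, then use \eqref{poly:f} together with the intersection numbers \eqref{IntNum(dualqHahn)} to convert $\mathsf f_i$ into $\mathsf v_i/\mathsf k_i$ and identify the scalar, and finally apply \eqref{vi(A)E*i} to get \eqref{hi(X)=vi(A).E*iu}. Your explicit $q$-Pochhammer evaluation of $\mathsf k_i$ is exactly the simplification the paper leaves implicit, and it checks out.
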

\begin{proof}
From \eqref{eq;poly.h(i)}, we have
\begin{equation}\label{pf:eq.hi(X)=fi(A)}
	\mathsf h_i(\mathsf X) = \frac{(\sfr q;q)_i(q^{-\sfd};q)_i}{\mathsf t^i}\mathsf f_i(\mathsf A),
\end{equation}
on $\mathsf V$.
Evaluate $\mathsf f_i(\mathsf A)$ in \eqref{pf:eq.hi(X)=fi(A)} using \eqref{poly:f}, \eqref{IntNum(dualqHahn)} and simplify the result to get \eqref{hi(X)=vi(A)}. 
To obtain \eqref{hi(X)=vi(A).E*iu}, use \eqref{vi(A)E*i} and \eqref{hi(X)=vi(A)}.
\end{proof}

We finish this section with a comment. 
With reference to $\mathsf \Phi$, we define the scalars
\begin{equation}\label{eq:scalar m(i)}
	\mathsf m_i=\mathrm{trace}(\mathsf E_i \mathsf E^*_0), \qquad 0 \leq i \leq \sfd.
\end{equation}
By \cite[Theorem 17.12]{2004TerLAA}, the $\mathsf m_i$, $0 \leq i \leq \sfd$, are given in terms of the parameter array of $\mathsf \Phi$ by
\begin{equation}\label{formula.mi:PA}
	\mathsf m_i = \frac{\myvarphi_1\myvarphi_2\cdots\myvarphi_i \myphi_1\myphi_2\cdots \myphi_{\sfd-i}}{(\mytheta^*_0-\mytheta^*_1)\cdots(\mytheta^*_0-\mytheta^*_\sfd)(\mytheta_i-\mytheta_0)\cdots (\mytheta_i-\mytheta_{i-1})(\mytheta_i-\mytheta_{i+1})\cdots(\mytheta_i-\mytheta_\sfd)}.
\end{equation}
Applying the formulas in Definition \ref{Def:LS.dualqHahn} with \eqref{scalar;t.b.c} to \eqref{formula.mi:PA},  the $\mathsf m_i$ are given by
\begin{equation}\label{m_formula}
	\mathsf m_i = 
	q^{\sfd-i}\mathsf r^{\sfd-i} 
	\frac{(q^{i+1};q)_{\sfd-i} (\mathsf r^{-1}\mathsf t^2q^i;q)_{\sfd-i}(\sfr q;q)_i(1-\mathsf t^2q^{2i})}{(q;q)_{\sfd-i}(\mathsf t^2q^i;q)_{\sfd+1}}.
\end{equation}

\section{The primary $\mathbf T$-module $\mathbf{W}$}\label{Section:primary T-module}

Recall the primary $\mathbf{T}$-module $\mathbf{W}$ of $\Gamma$ from Section \ref{Section:algebraT}. 
In this section, we treat four dual $q$-Hahn Leonard systems that naturally arise from the structure of $\mathbf{W}$.
Since $\mathbf{W}$ is a module for both $T$ and $\widetilde T$, it contains both $M\hat{x}$ (as a $T$-module) and $M\hat{C}$ (as a $\widetilde T$-module).
Let $M\hat{x}^\perp$ (resp. $M\hat{C}^\perp$) denote the orthogonal complement of $M\hat{x}$ (resp. $M\hat{C}$) in $\mathbf{W}$.
Note that $M\hat{x}^\perp$ is an irreducible $T$-submodule of $\mathbf{W}$ with dimension $D-1$  and $M\hat{C}^\perp$ is an irreducible $\widetilde T$-submodule of $\mathbf{W}$ with dimension $D$; cf. \cite[Sections 6, 7]{2013LeeLAA}. Therefore, $\mathbf{W}$ decomposes in two ways:
\begin{align}
	&&\mathbf{W} & = M\hat{x} \oplus M\hat{x}^\perp && \text{(orthogonal direct sum of irreducible $T$-modules)} && \label{ODS:Mx,Mxp}\\
	&&& = M\hat{C} \oplus M\hat{C}^\perp && \text{(orthogonal direct sum of irreducible $\widetilde T$-modules)}\label{ODS:MC,MCp}.
\end{align}
For the rest of the paper, we set a non-zero scalar
\begin{equation}\label{scalar;tau}
	\tau=-q^{(-N-1)/2}.
\end{equation}
Indeed, it turns out that $\tau^2=b^{-1}c$, where $b,c$ are from Proposition \ref{prop:LSonMx,Mxp}(I); cf. \eqref{scalar;t.b.c}.

\begin{proposition}\label{prop:LSonMx,Mxp}
Recall the matrices $A$, $A^*$,  $\{E_i\}^D_{i=0}$, $\{E^*_i\}^D_{i=0}$ in $T$ and the irreducible $T$-submodules $M\hat{x}$, $M\hat{x}^\perp$ of $\mathbf{W}$ from \eqref{ODS:Mx,Mxp}.
Define the following sequences of matrices by
\begin{equation*}
	\Phi:=(A, A^*,\{E_i\}^D_{i=0}, \{E^*_i\}^D_{i=0}) |_{M\hat{x}}, \quad
	\Phi^\perp:=(A, A^*,\{E_i\}^{D-1}_{i=1}, \{E^*_i\}^{D-1}_{i=1}) |_{M\hat{x}^\perp}, 
\end{equation*}
where $|_{Z}$ means that each of the matrices in the sequence is restricted to the subspace $Z$ of $\mathbf{W}$. 
The following {\rm (I)}, {\rm (II)} hold.
\begin{itemize}
\item[\rm{(I)}]
The sequence $\Phi$ is a Leonard system on  $M\hat{x}$  that has dual $q$-Hahn type. 
The parameter sequence of $\Phi$ is
$
	(a, a^*, b, b^*,c,r; q, D),
$
where
\begin{align*}
	& a = \frac{q-q^{N-D+1}-q^{D+1}-1}{(q-1)^2}, && a^* = \frac{(q^N-q)(2-q^D-q^{N-D})}{(q-1)(q^D-1)(q^{N-D}-1)},\\
	& b=\frac{q^{N+1}}{(q-1)^2}, && b^*=\frac{(q^N-q)(q^N-1)}{(q-1)(q^D-1)(q^{N-D}-1)}, \\
	& c=\frac{1}{(q-1)^2}, &&  r = q^{D-N-1}.
\end{align*}
Moreover, for $0 \leq i \leq D$, the folloinwg (i)--(iii) hold.
\begin{itemize}
	\item[\rm (i)] The vectors $A_i\hat{x}$ $(=\hat{C}^+_{i-1}+\hat{C}^-_i)$ form a $\Phi$-standard basis for $M\hat{x}$.
	\item[\rm (ii)] The intersection numbers $b_i$, $c_i$ of $\Phi$ are given by
	\begin{equation*}
	b_i  = q^{2i+1}\gauss{D-i}{1}\gauss{N-D-i}{1}, \qquad  c_i  = \gauss{i}{1}^2. \qquad
	\end{equation*} 
	\item[\rm (iii)] The monic dual $q$-Hahn polynomials $h_i$ associated with $\Phi$ (cf. \eqref{LaurentPoly;h(i)}) are given by
	\begin{equation}\label{prop:eq:h(i)}
		h_i(\zeta) = h_i(\zeta; b, c, r, D;q)=
		\tau^i(q;q)^2_i v_i(a + b\tau \zeta^{-1} + c\tau^{-1}\zeta), \qquad
	\end{equation}
	where $v_i$ are the polynomials associated with $\Phi$ as in \eqref{eq:poly.v(i)}.
\end{itemize}

\item[\rm{(II)}]
The sequence $\Phi^\perp$ is a Leonard system on $M\hat{x}^\perp$ that has dual $q$-Hahn type. 
The parameter sequence of $\Phi^\perp$ is
$
	(a^\perp, a^{*\perp}, b^\perp, b^{*\perp},c^\perp,r^\perp; q, D-2),
$
where
\begin{equation*}
	(a^\perp, a^{*\perp}, b^\perp, b^{*\perp},c^\perp,r^\perp)=(a, a^*, bq^{-1}, b^*q^{-1}, cq, rq).
\end{equation*}
Moreover, for $0 \leq i \leq D-2$, the folloinwg (i)--(iii) hold.
\begin{itemize}
	\item[\rm (i)] 
		The vectors 
	\begin{equation}\label{prop:eq:u-perp(i)}
		u^\perp_i = (q^{D-i-1}-1)\hat{C}^+_i + (q^{-i-1}-1)\hat{C}^-_{i+1}  \qquad
	\end{equation}
	form a $\Phi^\perp$-standard basis for $M\hat{x}^\perp$.
	\item[\rm (ii)] 
	The intersection numbers $b^\perp_i$, $c^\perp_i$ of $\Phi^\perp$ are given by
	\begin{equation*}
	b^\perp_i  = q^{2i+3}\gauss{D-i-2}{1}\gauss{N-D-i-1}{1}, \qquad  c^\perp_i  = q\gauss{i}{1}\gauss{i+1}{1}. \qquad
	\end{equation*}
	\item[\rm (iii)] The monic dual $q$-Hahn polynomials $h^\perp_i$ associated with $\Phi^\perp$ are given by
	\begin{equation}\label{prop:eq:h-perp(i)}
		h^\perp_i(\zeta) = h^\perp_i(\zeta; bq^{-1}, cq, rq, D-2;q) = 
		\tau^iq^i(q;q)_i(q^2;q)_i v^\perp_i(a + b\tau \zeta^{-1} + c\tau^{-1}\zeta), \qquad
	\end{equation}
	where $v^\perp_i$ are the polynomials associated with $\Phi^\perp$ as in \eqref{eq:poly.v(i)}.
\end{itemize}
\end{itemize}
\end{proposition}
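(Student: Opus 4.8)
The plan is to treat parts (I) and (II) in turn: in each case I first identify the given sequence as a Leonard system, then compute its parameter array (hence its type and parameters $a,a^*,b,b^*,c,r$), and finally deduce the standard basis, the intersection numbers, and the associated monic dual $q$-Hahn polynomials.

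\emph{Part (I).} Since $M\hat x$ is the primary $T$-module, Section~\ref{Section:DRGs} gives that $\{A_i\hat x\}_{i=0}^D$ (with $A_i\hat x=E^*_i\hat X$) and $\{E_i\hat x\}_{i=0}^D$ are bases of $M\hat x$, that $A^*$ acts on $A_i\hat x$ as multiplication by $\theta^*_i$, and that $A$ acts by the three-term recurrence with off-diagonal entries $b_{i-1},c_{i+1}\ne 0$; together with the $Q$-polynomial relations $E_iA^*E_j=0$ for $|i-j|>1$ and the fact that $\theta_0,\dots,\theta_D$ and $\theta^*_0,\dots,\theta^*_D$ are distinct, this shows $\Phi$ satisfies Definition~\ref{Def:LS}, so $\Phi$ is a Leonard system of diameter $D$. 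Since a Leonard system is determined up to isomorphism by its parameter array, it suffices to compute that array: expanding the Gaussian binomials in the eigenvalue formula for $\theta_i$ of Section~\ref{Section:Grassmann graphs} and in \eqref{Gr.dualEigval} rewrites $\theta_i=a+bq^{-i}+cq^i$ and $\theta^*_i=a^*+b^*q^{-i}$ with exactly the $a,b,c,a^*,b^*$ of~(I), so by Definition~\ref{Def:LS.dualqHahn} $\Phi$ has dual $q$-Hahn type up to the single parameter $r$, which I fix by equating $b_0=q\gauss{D}{1}\gauss{N-D}{1}$ with $b(1-q^{-D})(1-rq)$ from \eqref{IntNum(dualqHahn)}, obtaining $r=q^{D-N-1}$; one checks that with this $r$ the formulas \eqref{IntNum(dualqHahn)} reproduce \eqref{Gr.IntNum;a,b,c}, namely the $b_i,c_i$ of~(ii). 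For~(i): $\hat X=\sum_i A_i\hat x$ lies in $M\hat x$ and is the Perron eigenvector of $A$ for $\theta_0=b_0$, so $\hat X\in E_0 M\hat x$, while $A_i\hat x\in E^*_i M\hat x$; hence $\{A_i\hat x\}$ is a $\Phi$-standard basis, and then (ii) is immediate by comparing $A.A_i\hat x=b_{i-1}A_{i-1}\hat x+a_iA_i\hat x+c_{i+1}A_{i+1}\hat x$ (Section~\ref{Section:DRGs}) with \eqref{eq;3-term.AE*i}. For~(iii) I apply the lemma of Section~\ref{Section:LS of dual q-Hahn} containing \eqref{hi(X)=vi(A)}: let $\mathsf X\in\mathrm{End}(M\hat x)$ act as multiplication by $\tau q^i$ on $E_i M\hat x$; then $\mathsf X$ is invertible and, using $c=b\tau^2$ (equivalently $\tau^2=b^{-1}c$, cf.\ \eqref{scalar;tau},~\eqref{scalar;t.b.c}), one has $a+b\tau(\mathsf X+\mathsf X^{-1})=A$ on $M\hat x$. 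Since $\tau^{-2}rq^{1-D}=q^{N+1}\cdot q^{D-N-1}\cdot q^{1-D}=q$, so that $(\tau^{-2}rq^{1-D};q)_i=(q;q)_i$, formula \eqref{hi(X)=vi(A)} (with $\mathsf t=\tau$) gives $h_i(\mathsf X)=\tau^i(q;q)_i^2\,v_i(A)$ on $M\hat x$, which on the $\mathsf X$-eigenvector with eigenvalue $\zeta$ (where $A$ acts as $a+b\tau\zeta^{-1}+c\tau^{-1}\zeta$) is exactly \eqref{prop:eq:h(i)}.

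\emph{Part (II).} By \cite[Sections 5--7]{2013LeeLAA}, $M\hat x^\perp$ is a thin irreducible $T$-module of dimension $D-1$ with $A$-endpoint $1$ and dual endpoint $1$; hence $A^*|_{M\hat x^\perp}$ has the distinct eigenvalues $\theta^*_1,\dots,\theta^*_{D-1}$, $A|_{M\hat x^\perp}$ acts irreducibly and tridiagonally along the corresponding flag, and $E_iA^*E_j|=0$ for $|i-j|>1$, so $\Phi^\perp$ is a Leonard system of diameter $D-2$. Re-indexing by $0\le j\le D-2$, its eigenvalues are $\theta_{j+1}$ and its dual eigenvalues $\theta^*_{j+1}$; substituting into the formulas $\theta_i=a+bq^{-i}+cq^i$, $\theta^*_i=a^*+b^*q^{-i}$ from part~(I) gives $\theta_{j+1}=a+(bq^{-1})q^{-j}+(cq)q^{j}$ and $\theta^*_{j+1}=a^*+(b^*q^{-1})q^{-j}$, so $(a^\perp,a^{*\perp},b^\perp,b^{*\perp},c^\perp)=(a,a^*,bq^{-1},b^*q^{-1},cq)$ and $\Phi^\perp$ is again of dual $q$-Hahn type. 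Each $u^\perp_i$ of \eqref{prop:eq:u-perp(i)} is supported on $C^+_i\cup C^-_{i+1}\subseteq\Gamma_{i+1}(x)$, hence lies in $E^*_{i+1}\mathbb{C}^X$, and it is orthogonal to $M\hat x$ since $M\hat x\cap E^*_{i+1}\mathbb{C}^X=\mathbb{C}(\hat C^+_i+\hat C^-_{i+1})$ and, by Lemma~\ref{cardinality:C+-}, $(q^{D-i-1}-1)\lVert\hat C^+_i\rVert^2=(1-q^{-i-1})\lVert\hat C^-_{i+1}\rVert^2$; thus $u^\perp_i\in E^*_{i+1}M\hat x^\perp$ and the $D-1$ vectors $u^\perp_i$ form a basis of $M\hat x^\perp$. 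Running the two-row recurrence of Lemma~\ref{lem:actionAdjMatA} one computes $A.u^\perp_i$ and verifies it equals $b^\perp_{i-1}u^\perp_{i-1}+a^\perp_i u^\perp_i+c^\perp_{i+1}u^\perp_{i+1}$ with $b^\perp_i=q^{2i+3}\gauss{D-i-2}{1}\gauss{N-D-i-1}{1}$, $c^\perp_i=q\gauss{i}{1}\gauss{i+1}{1}$, and $a^\perp_i=\theta_1-b^\perp_i-c^\perp_i$; summing these relations (with $b^\perp_{D-2}=c^\perp_0=0$) shows $A\bigl(\sum_i u^\perp_i\bigr)=\theta_1\sum_i u^\perp_i$, so $\sum_i u^\perp_i$ spans $E_1 M\hat x^\perp$ and $\{u^\perp_i\}$ is a $\Phi^\perp$-standard basis, proving (i) and~(ii). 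Matching $c^\perp_i$ against \eqref{IntNum(dualqHahn)} with diameter $D-2$ and parameters $(a,a^*,bq^{-1},b^*q^{-1},cq,r^\perp)$ forces $r^\perp=rq=q^{D-N}$. Finally (iii) proceeds as in part~(I)(iii): with $\mathsf X^\perp\in\mathrm{End}(M\hat x^\perp)$ acting as $\tau q^i$ on $E_i M\hat x^\perp$, so that $a^\perp+b^\perp(\tau q)(\mathsf X^\perp+(\mathsf X^\perp)^{-1})=A$ on $M\hat x^\perp$, and since $(\tau q)^{-2}\,r^\perp q^{3-D}=q^{N-1}\cdot q^{D-N}\cdot q^{3-D}=q^2$, formula \eqref{hi(X)=vi(A)} with $\mathsf t=\tau q$ gives $h^\perp_i(\mathsf X^\perp)=(\tau q)^i(q;q)_i(q^2;q)_i\,v^\perp_i(A)=\tau^iq^i(q;q)_i(q^2;q)_i\,v^\perp_i(A)$, i.e.\ \eqref{prop:eq:h-perp(i)}.

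The main obstacle I anticipate is part~(II): verifying that the explicit vectors $u^\perp_i$, with the precise coefficients $q^{D-i-1}-1$ and $q^{-i-1}-1$, genuinely assemble into a $\Phi^\perp$-standard basis — in particular that $A.u^\perp_i$ has the stated tridiagonal expansion — forces one to run the full two-row recurrence of Lemma~\ref{lem:actionAdjMatA} and check a chain of identities in $q$, after which keeping the normalizations of $u^\perp_i$, of $\tau q$, and of the $(q;q)$- and $(q^2;q)$-factors mutually consistent throughout (i)--(iii) is the bookkeeping-heavy part. All of this can alternatively be obtained by specializing the corresponding $q$-Racah computations of \cite[Sections 5--7]{2013LeeLAA} to the intersection numbers \eqref{Gr.IntNum;a,b,c} and Lemma~\ref{tilde;a,b,c(i)}.
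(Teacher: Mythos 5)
Your proposal is correct and takes essentially the same route as the paper: the paper's own proof defers the structural facts to \cite{2013LeeLAA} (Section 6; or \cite{2018LeeTanakaSIGMA}, Section 6), declares (i)--(ii) routine, and gets (iii) by evaluating \eqref{eq;poly.h(i)} via \eqref{poly:f} and the parameter sequences, which is exactly the content of your argument, your use of \eqref{hi(X)=vi(A)} with an auxiliary diagonal operator being the same normalization computation in operator form. The details you add beyond the paper (matching $\theta_i$, $\theta^*_i$, $b_i$, $c_i$ to pin down the dual $q$-Hahn parameters and $r$, the orthogonality check placing $u^\perp_i$ in $E^*_{i+1}M\hat{x}^\perp$, and the tridiagonal action of $A$ on the $u^\perp_i$) are consistent and correctly set up, so they fill in rather than replace the paper's outsourced computations.
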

\begin{proof} 
(I): Refer to \cite[Section 6]{2013LeeLAA} (or \cite[Section 6]{2018LeeTanakaSIGMA}).
Parts (i) and (ii) routinely follows.
For (iii), evaluate \eqref{eq;poly.h(i)} using \eqref{poly:f}, part (ii), and the parameter sequence of $\Phi$.\\
(II): Similar. 
\end{proof}

\begin{remark}
We note that for each irreducible $T$-module $W$ the restrictions of $A$ and $A^*$ on $W$ induce a Leonard system of dual $q$-Hahn type; cf. \cite[Theorem 4.6]{2015GaGaHoLAA}.
\end{remark}

\begin{proposition}\label{prop:LSonMC,MCp}
Recall the matrices $A$, $\widetilde A^*$,  $\{E_i\}^D_{i=0}$, $\{\widetilde E^*_i\}^{D-1}_{i=0}$ in $\widetilde T$ and the irreducible $\widetilde T$-submodules $M\hat{C}$, $M\hat{C}^\perp$ of $\mathbf{W}$ from \eqref{ODS:MC,MCp}.
Define the following sequences of matrices by
\begin{equation*}
	\widetilde\Phi:=(A, \widetilde A^*,\{E_i\}^{D-1}_{i=0}, \{\widetilde E^*_i\}^{D-1}_{i=0})|_{M\hat{C}}, \quad
	\widetilde\Phi^\perp:=(A, A^*,\{E_i\}^{D}_{i=1}, \{\widetilde E^*_i\}^{D-1}_{i=0})|_{M\hat{C}^\perp},
\end{equation*}
where $|_{Z}$ means that each of the matrices in the sequence is restricted to the subspace $Z$ of $\mathbf{W}$. 
Recall the parameter sequence $(a, a^*, b, b^*,c,r; q, D)$ of $\Phi$ from Proposition \ref{prop:LSonMx,Mxp}.
The following {\rm (I)}, {\rm (II)} hold.
\begin{itemize}
\item[\rm{(I)}]
The sequence $\widetilde\Phi$ is a Leonard system on $M\hat{C}$ that has dual $q$-Hahn type. 
The parameter sequence of $\widetilde\Phi$ is
$
	(\widetilde a, \widetilde a^{*}, \widetilde b, \widetilde b^{*}, \widetilde c, \widetilde r; q, D-1),
$
where \footnote{See \cite[Proposition 4.6]{2011TanakaE-JC} for a general result.}
\begin{equation*}
	(\widetilde a, \widetilde a^{*}, \widetilde b, \widetilde b^{*}, \widetilde c, \widetilde r) = \left( a, a^*+\frac{q-1}{q^{N-D+1}-1}b^*, b, \frac{q^{N-D}-1}{q^{N-D+1}-1}b^*, c, r  \right).
\end{equation*}
Moreover, $0 \leq i \leq D-1$, the folloinwg (i)--(iii) hold.
\begin{itemize}
	\item[\rm (i)] The vectors $\hat{C}_i$ $(=\hat{C}^-_i+\hat{C}^+_i)$ form a $\widetilde\Phi$-standard basis for $M\hat{C}$.
	\item[\rm (ii)] The intersection numbers $\widetilde b_i$, $\widetilde c_i$ of $\widetilde \Phi$ are given by
	\begin{equation*}
	 \widetilde{b}_i = q^{2i+2}\gauss{D-i-1}{1}\gauss{N-D-i}{1}, \qquad 
	\widetilde{c}_i = \gauss{i+1}{1}\gauss{i}{1}. \qquad
	\end{equation*}
	\item[\rm (iii)] The monic dual $q$-Hahn polynomials $\widetilde h_i$ associated with $\widetilde \Phi$ are given by
	\begin{equation}\label{prop:eq:wt-h(i)}
		\widetilde h_i(\zeta) = \widetilde h_i(\zeta;  b, c, r, D-1;q)=
		\tau^i(q;q)_i(q^2;q)_i\widetilde v_i(a + b\tau \zeta^{-1} + c\tau^{-1}\zeta), \qquad 
	\end{equation}
	where $\widetilde v_i$ are the polynomials associated with $\widetilde \Phi$ as in \eqref{eq:poly.v(i)}.
\end{itemize}

\item[\rm{(II)}]
The matrices of $\widetilde \Phi^\perp$ act on $M\hat{C}^\perp$ as a Leonard system that has dual $q$-Hahn type. 
The parameter sequence of $\widetilde \Phi^\perp$ is
$
	(\widetilde a^\perp, \widetilde a^{*\perp}, \widetilde b^\perp, \widetilde b^{*\perp}, \widetilde c^\perp, \widetilde r^\perp; q, D-1),
$
where
\begin{equation*}
	(\widetilde a^\perp, \widetilde a^{*\perp}, \widetilde b^\perp, \widetilde b^{*\perp}, \widetilde c^\perp, \widetilde r^\perp)=\left( a, a^*+\frac{q-1}{q^{N-D+1}-1}b^*, bq^{-1}, \frac{q^{N-D}-1}{q^{N-D+1}-1}b^*, cq, rq  \right).
\end{equation*}
Moreover, $0 \leq i \leq D-1$, the folloinwg (i)--(iii) hold.
\begin{itemize}
	\item[\rm (i)] The vectors 
	\begin{equation}\label{prop:eq:wt-u-perp(i)}
		\widetilde u^\perp_i = (q^{N-D-i}-1)\hat{C}^-_i + (q^{-i-1}-1)\hat{C}^+_{i} \qquad
	\end{equation}
	form a $\widetilde\Phi^\perp$-standard basis for $M\hat{C}^\perp$.
	\item[\rm (ii)] The intersection numbers $\widetilde b^\perp_i$, $\widetilde c^\perp_i$ of $\widetilde \Phi^\perp$ are given by
	\begin{equation*}
	\widetilde b^\perp_i  = q^{2i+2}\gauss{D-i-1}{1}\gauss{N-D-i-1}{1}, \qquad  c^\perp_i  = q\gauss{i}{1}^2. \qquad 
	\end{equation*}
	\item[\rm (iii)] The monic dual $q$-Hahn polynomials $\widetilde h^\perp_i$ associated with $\widetilde \Phi^\perp$ are given by
	\begin{equation}\label{prop:eq:wt-h-perp(i)}
		\widetilde h^\perp_i(\zeta) = \widetilde h^\perp_i(\zeta; bq^{-1},cq, rq, D-1;q)=
		\tau^iq^i(q;q)^2_i\widetilde v^\perp_i(a+b\tau\zeta^{-1}+c\tau^{-1}\zeta),
	\end{equation}
	where $\widetilde v^\perp_i$ are the polynomials associated with $\widetilde \Phi^\perp$ as in \eqref{eq:poly.v(i)}.
\end{itemize}
\end{itemize}
\end{proposition}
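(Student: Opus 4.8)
The plan is to run the proof of Proposition~\ref{prop:LSonMx,Mxp} with the data $(x,T,M^*,A^*,\{E^*_i\},M\hat x)$ replaced throughout by $(C,\widetilde T,\widetilde M^*,\widetilde A^*,\{\widetilde E^*_i\},M\hat C)$; compare \cite[Sections~6--7]{2013LeeLAA}. Write $(a,a^*,b,b^*,c,r)$ for the parameters of $\Phi$ from Proposition~\ref{prop:LSonMx,Mxp}(I), and recall $\tau^2=b^{-1}c=q^{-N-1}$ by \eqref{scalar;tau}.

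\emph{Part (I).} By Section~\ref{Section:Grassmann graphs}, $M\hat C$ is the primary, hence irreducible, $\widetilde T$-module: on the basis $\{\hat C_i\}_{i=0}^{D-1}$ the matrix $A$ acts tridiagonally with nonzero superdiagonal $\widetilde b_{i-1}$ and subdiagonal $\widetilde c_{i+1}$, and $\widetilde A^*$ acts as the scalar $\widetilde\theta^*_i$ on $\hat C_i$, the $\widetilde\theta^*_i$ being mutually distinct. Thus $A|_{M\hat C}$ (irreducible tridiagonal) and $\widetilde A^*|_{M\hat C}$ are multiplicity-free, and with the relations $\widetilde E^*_iA\widetilde E^*_j=0$, $E_i\widetilde A^*E_j=0$ for $|i-j|>1$ from Section~\ref{Section:Grassmann graphs} (the nonvanishing for $|i-j|=1$ being the usual consequence of $A,\widetilde A^*$ generating $\widetilde T$ and $M\hat C$ being irreducible), the axioms of Definition~\ref{Def:LS} hold, so $\widetilde\Phi$ is a Leonard system. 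Since $\hat C_i\in\widetilde E^*_iM\hat C$ and $\sum_i\hat C_i=\hat X\in E_0\mathbb{C}^X\cap M\hat C=E_0M\hat C$, the basis $\{\hat C_i\}$ is $\widetilde\Phi$-standard; this proves~(i) and identifies the intersection numbers of $\widetilde\Phi$ with the graph-theoretic $\widetilde b_i,\widetilde c_i$ of Lemma~\ref{tilde;a,b,c(i)}, which is~(ii). To read off the parameter sequence: the eigenvalues of $A$ on $M\hat C$ are $\theta_0,\dots,\theta_{D-1}$ (because $\{E_i\hat C\}_{i=0}^{D-1}$ is a basis of $M\hat C$, forcing $E_DM\hat C=0$), which the eigenvalue formula of Section~\ref{Section:Grassmann graphs} rewrites as $a+bq^{-i}+cq^i$; the dual eigenvalues $\widetilde\theta^*_i$ of \eqref{Gr.dualEigval.C} are visibly of the form $\widetilde a^*+\widetilde b^*q^{-i}$, and a one-line comparison with \eqref{Gr.dualEigval} gives $\widetilde a^*=a^*+\tfrac{q-1}{q^{N-D+1}-1}b^*$, $\widetilde b^*=\tfrac{q^{N-D}-1}{q^{N-D+1}-1}b^*$ (equivalently, the averaging identity of \cite[Proposition~4.6]{2011TanakaE-JC}); and matching the intersection numbers $\widetilde b_i,\widetilde c_i$ of Lemma~\ref{tilde;a,b,c(i)} against \eqref{IntNum(dualqHahn)} at $\sfd=D-1$, $\sfb=b$ forces $\widetilde r=r=q^{D-N-1}$ and confirms dual $q$-Hahn type (the side conditions of Definition~\ref{Def:LS.dualqHahn} hold by $N\geq 2D$, $D\geq 3$). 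Part~(iii) then follows from \eqref{hi(X)=vi(A)}: here $\mathsf t^2=\widetilde b^{-1}\widetilde c=b^{-1}c=\tau^2$ and $\mathsf t^{-2}\widetilde r q^{1-\sfd}=q^2$, so $(\mathsf t^{-2}\widetilde r q^{1-\sfd};q)_i=(q^2;q)_i$ and $\widetilde h_i(\zeta)=\tau^i(q;q)_i(q^2;q)_i\,\widetilde v_i(a+b\tau\zeta^{-1}+c\tau^{-1}\zeta)$.

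\emph{Part (II).} Here $M\hat C^\perp$ is an irreducible $\widetilde T$-submodule of $\mathbf W$ of dimension $D$ (Section~\ref{Section:primary T-module}), with $E_0M\hat C^\perp=0$ since $E_0$ projects onto $\mathbb{C}\hat X\subseteq M\hat C$. I would first show the vectors $\widetilde u^\perp_i$ of \eqref{prop:eq:wt-u-perp(i)} form a $\widetilde\Phi^\perp$-standard basis: each is supported on $C_i$, hence lies in $\widetilde E^*_i\mathbf W$, and it lies in $M\hat C^\perp$ because the single identity $(q^{N-D-i}-1)|C^-_i|+(q^{-i-1}-1)|C^+_i|=0$ — a direct check from the cardinality formulas of Lemma~\ref{cardinality:C+-} — makes $\widetilde u^\perp_i$ orthogonal to $\hat C_i$ (and automatically to $\hat C_j$, $j\neq i$), hence to all of $M\hat C$; a dimension count gives a basis. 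Next, using Lemma~\ref{lem:actionAdjMatA} to expand $A.\widetilde u^\perp_i$ and simplifying, one finds $A$ acts tridiagonally on $\{\widetilde u^\perp_i\}$ with superdiagonal $\widetilde b^\perp_{i-1}$ and subdiagonal $\widetilde c^\perp_{i+1}$ as in~(ii); in particular $A|_{M\hat C^\perp}$ is multiplicity-free and $\sum_i\widetilde u^\perp_i$ is a $\theta_1$-eigenvector of $A$, so $\{\widetilde u^\perp_i\}$ is $\widetilde\Phi^\perp$-standard, giving~(i),~(ii). Since $\widetilde A^*$ acts as $\widetilde\theta^*_i$ on $\widetilde u^\perp_i$ by Lemma~\ref{lem:actionMatA*,wtA*}, $\widetilde\Phi^\perp$ is a Leonard system exactly as in~(I); its $A$-eigenvalues are $\theta_1,\dots,\theta_D$ (from $E_0M\hat C^\perp=0$, $\dim M\hat C^\perp=D$, and multiplicity-freeness), again of the form $a+bq^{-i}+cq^i$, its dual eigenvalues are the $\widetilde\theta^*_i$ of~(I), and matching $\widetilde b^\perp_i,\widetilde c^\perp_i$ with \eqref{IntNum(dualqHahn)} (still at $\sfd=D-1$) yields the stated parameter sequence, of dual $q$-Hahn type. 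For~(iii), apply \eqref{hi(X)=vi(A)} with $\mathsf t^2=(\widetilde b^\perp)^{-1}\widetilde c^\perp=b^{-1}c\,q^2=\tau^2q^2$, so $\mathsf t^i=\tau^iq^i$, and $\mathsf t^{-2}\widetilde r^\perp q^{1-\sfd}=q$, whence $(\mathsf t^{-2}\widetilde r^\perp q^{1-\sfd};q)_i=(q;q)_i$ and $\widetilde h^\perp_i(\zeta)=\tau^iq^i(q;q)_i^2\,\widetilde v^\perp_i(a+b\tau\zeta^{-1}+c\tau^{-1}\zeta)$.

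I expect the main obstacle to be the verification in~(II) that $A$ acts tridiagonally on $\{\widetilde u^\perp_i\}$: the cross-terms $\hat C^+_{i-1}$, $\hat C^-_{i+1}$, $\hat C^-_i$, $\hat C^+_i$ produced by Lemma~\ref{lem:actionAdjMatA} must recombine into exact scalar multiples of $\widetilde u^\perp_{i-1}$, $\widetilde u^\perp_{i+1}$, $\widetilde u^\perp_i$, and it is precisely this requirement that pins down the coefficients $q^{N-D-i}-1$, $q^{-i-1}-1$ in \eqref{prop:eq:wt-u-perp(i)} while simultaneously yielding the standard-basis property. The remaining steps — rewriting $\theta_i$, the $\widetilde\theta^*_i$, and the intersection numbers in the parametric form of Definition~\ref{Def:LS.dualqHahn}, and simplifying the scalar prefactors in~(iii) — are routine $q$-binomial manipulations. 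One can avoid the explicit $A$-action computation altogether by invoking the structure theory of irreducible $\widetilde T$-modules for $Q$-polynomial distance-regular graphs admitting a Delsarte clique (\cite[Section~7]{2013LeeLAA}, \cite{2005SuzukiJACO}) to get the Leonard-system structure abstractly, leaving only the parameter array to compute.
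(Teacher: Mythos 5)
Your proposal is correct and follows essentially the same route as the paper, which proves this proposition by transferring the argument of Proposition \ref{prop:LSonMx,Mxp} (itself resting on the explicit actions of $A$, $\widetilde A^*$ from Lemmas \ref{lem:actionAdjMatA}, \ref{lem:actionMatA*,wtA*} and the general framework of \cite[Sections 6--7]{2013LeeLAA}) to the clique setting: identify the standard bases $\{\hat C_i\}$ and $\{\widetilde u^\perp_i\}$, read off the intersection numbers, match eigenvalues, dual eigenvalues and intersection numbers against Definition \ref{Def:LS.dualqHahn} and \eqref{IntNum(dualqHahn)}, and normalize via \eqref{eq;poly.h(i)}--\eqref{hi(X)=vi(A)} to get (iii). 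The details you flag as the main obstacle (orthogonality of $\widetilde u^\perp_i$ to $M\hat C$ via Lemma \ref{cardinality:C+-}, and the recombination of the $A$-action on $\{\widetilde u^\perp_i\}$ into the stated tridiagonal coefficients) do check out, and are exactly the "routine" verifications the paper leaves to the cited references.
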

\begin{proof} 
Similar to Proposition \ref{prop:LSonMx,Mxp}.
\end{proof}

We comment on the decompositions \eqref{ODS:Mx,Mxp} and \eqref{ODS:MC,MCp} of $\mathbf{W}$.
We first consider the orthogonal direct sum of $\mathbf{W}$ from \eqref{ODS:Mx,Mxp}.
Let $\pi\in\mathrm{End}(\mathbf{W})$ be the orthogonal projection onto $M\hat{x}$, i.e., the element $\pi$ satisfies $(\pi-1)M\hat{x}=0$ and $\pi(M\hat{x}^\perp)=0$.
We give an action of $\pi$ on $\mathbf{W}$ as follows.
Consider a $\Phi$-standard basis $\{A_i\hat{x}\}^D_{i=0}$ for $M\hat{x}$ and a $\Phi^\perp$-standard basis $\{u^\perp_i\}^{D-2}_{i=0}$ for $M\hat{x}^\perp$. 
From them, we find that
\begin{align}
	& \hat{C}^+_{i-1} = \frac{q^i-1}{q^D-1}A_i\hat{x} + \frac{q^i}{q^D-1}u^\perp_{i-1}, \label{eq(1):C+-;Aix,upi}\\
	& \hat{C}^-_i = \frac{q^D-q^i}{q^D-1}A_i\hat{x} + \frac{q^i}{1-q^D} u^\perp_{i-1}, \label{eq(2):C+-;Aix,upi}
\end{align}
 for $1 \leq i \leq D-1$.
From \eqref{eq(1):C+-;Aix,upi} and \eqref{eq(2):C+-;Aix,upi}, the action of $\pi$ on $\hat{C}^\pm_j$ is given by
\begin{equation}\label{eq:pi.Cpm(i)}
	\pi.\hat{C}^+_{i-1} = \frac{q^i-1}{q^D-1}(\hat{C}^+_{i-1}+\hat{C}^-_i),	 
	\qquad
 	\pi.\hat{C}^-_i = \frac{q^D-q^i}{q^D-1}(\hat{C}^+_{i-1}+\hat{C}^-_i),
\end{equation}
for $1 \leq i \leq D-1$. Moreover, we have $\pi.\hat{C}^-_0 = \hat{C}^-_0$ and $\pi.\hat{C}^+_{D-1} = \hat{C}^+_{D-1}$.

Next, we consider the orthogonal direct sum of $\mathbf{W}$ from \eqref{ODS:MC,MCp}.
Let $\widetilde\pi\in\mathrm{End}(\mathbf{W})$ be the orthogonal projection onto $M\hat{C}$, i.e., the element $\widetilde\pi$ satisfies $(\widetilde\pi-1)M\hat{C}=0$ and $\widetilde\pi(M\hat{C}^\perp)=0$.
We give an action of $\widetilde\pi$ on $\mathbf{W}$ as follows.
Consider a $\widetilde\Phi$-standard basis $\{\hat{C}_i\}^{D-1}_{i=0}$ for $M\hat{C}$ and a $\widetilde\Phi^\perp$-standard basis $\{\widetilde u^\perp_i\}^{D-1}_{i=0}$ for $\widetilde M\hat{x}^\perp$. 
From them, we find that
\begin{align}
	& \hat{C}^-_i = \frac{q^{i+1}-1}{q^{N-D+1}-1}\hat{C}_i + \frac{q^{i+1}}{q^{N-D+1}-1}\widetilde u^\perp_i,\label{eq(1):C+-;Ci,tilde upi} \\
	& \hat{C}^+_i = \frac{q^{N-D+1}-q^{i+1}}{q^{N-D+1}-1}\hat{C}_i - \frac{q^{i+1}}{q^{N-D+1}-1}\widetilde u^\perp_i, \label{eq(2):C+-;Ci,tilde upi}
\end{align}
for $0 \leq i \leq D-1$.
From \eqref{eq(1):C+-;Ci,tilde upi} and \eqref{eq(2):C+-;Ci,tilde upi}, the action of $\widetilde \pi$ on $\hat{C}^\pm_j$ is given by
\begin{equation}\label{eq:wtpi.Cpm}
	\widetilde\pi(\hat{C}^-_i) = \frac{q^{i+1}-1}{q^{N-D+1}-1}(\hat{C}^-_i+\hat{C}^+_i), 
	\qquad
	\widetilde\pi(\hat{C}^+_i) = \frac{q^{N-D+1}-q^{i+1}}{q^{N-D+1}-1}(\hat{C}^-_i+\hat{C}^+_i),
\end{equation}
for $0 \leq i \leq D-1$.

\section{The confluent Cherednik algebra $\mathcal{H}_{\mathrm V}$}\label{Section:Hv}

The double affine Hecke algebra (DAHA), or Cherednik algebra, for a reduced affine root system was defined by Cherednik \cite{1992Cherednik}, and the definition was extended to non-reduced affine root systems of type $(C^\vee_n, C_n)$ by Sahi \cite{1999SahiAnnMath}.
In \cite{2016Mazzocco} Mazzocco introduced seven new algebras as degenerations of the DAHAs of type $(C^\vee_1, C_1)$ and established a new relation between the theory of the Painlev\'e equations and the theory of the $q$-Askey scheme. 
Among the seven algebras, the confluent Cherednik algebra $\mathcal{H}_{\mathrm{III}}$ \cite[(3.86)--(3.91)]{2016Mazzocco} has been shown to be recognized to a certain nil-DAHA of type $(C^\vee_1, C_1)$, which is associated with dual polar graphs; cf. \cite[Remark 8.4]{2018LeeTanakaSIGMA}.\footnote{In \cite[Definition 8.1]{2018LeeTanakaSIGMA} we overlooked the relation $\mathcal{U}'=q\mathcal{U}\mathcal{X}$ which is obtained by applying double-dot normalization to $\mathcal{T}_1=\mathcal{T}'^{-1}\mathcal{X}^{-1}$.
}
In the present paper, we shall focus our attention on the algebra $\mathcal{H}_\mathrm{V}$, another confluence Cherednik algebra among the seven,  and discuss how $\mathcal{H}_\mathrm{V}$ is related to our Grassmann graph $\Gamma$.
\begin{definition}[{\cite[Theorem 3.2.(3.73)--(3.78)]{2016Mazzocco}}]\label{Def:CheAlgHv}
Let $k,k',u,q$ be non-zero scalars in $\mathbb{C}$.
The algebra $\mathcal{H}_{\mathrm V}=\mathcal{H}_{\mathrm V}(k,k',u;q)$ is the associative $\mathbb{C}$-algebra with generators $\mathcal{T}, \mathcal{T}'$, $\mathcal{U}$, $\mathcal{U}'$ and relations
\begin{align}
	&&&(\mathcal{T}-k)(\mathcal{T}+k^{-1})=0, 	& & \mathcal{U}'(\mathcal{U}'+1)=0, && \label{Def:algHv.rel(1)} \\	
	&&&(\mathcal{T}'-k')(\mathcal{T}'+k'^{-1})=0, 	& & \mathcal{U}(\mathcal{U}+u^{-1})=0, && \label{Def:algHv.rel(2)}\\	
	&&&  q^{1/2}\mathcal{T}'\mathcal{T}\mathcal{U}' = \mathcal{U} + u^{-1}, 
	&&  q^{1/2}\mathcal{U}\mathcal{T}'\mathcal{T} = \mathcal{U}' + 1. \label{Def:algHv.rel(3)}
\end{align}
We remark that $\mathcal{T}$, $\mathcal{T}'$ are invertible.
\end{definition}

We now construct an $\mathcal{H}_\mathrm{V}$-module structure.
Recall the prime power $q$.
In what follows, we set 
\begin{equation}\label{scalars:k,k',u}
	k=\sqrt{-1}q^{(D-N-1)/2}, \qquad k' = \sqrt{-1}q^{-D/2}, \qquad u=q^{D-\frac{N}{2}}.
\end{equation}
Using these scalars, define the matrices as follows: for $0 \leq i \leq D-1$
\begin{equation}\label{matrices:t(i),u'(i)}
	t(i) := k\begin{bmatrix}
	1-q^{i+1}+q^{N-D+1} & q^{N-D+1}(q^{D-N+i}-1) \\
	1-q^{i+1} & q^{i+1}
	\end{bmatrix},
	\qquad
	u'(i) := \begin{bmatrix}
	-1 & 0 \\
	q^{-i-1} & 0
	\end{bmatrix},
\end{equation}
and for $1 \leq i \leq D-1$,
\begin{equation}\label{matrices:t'(i),u(i)}
	t'(i) := k'\begin{bmatrix}
	1-q^i+q^D & q^i-q^D \\
	1-q^i & q^i
	\end{bmatrix},
	\qquad 
	u(i) := u^{-1}\begin{bmatrix}
	-1 & 1-q^{D-i} \\
	0 & 0
	\end{bmatrix}.
\end{equation}
Moreover, define
\begin{equation}\label{matrices:t'(0,D),u(0,D)}
	t'(0) := \begin{bmatrix} k'  \end{bmatrix}, \quad
	t'(D) := \begin{bmatrix} k'  \end{bmatrix}, \quad
	u(0) := \begin{bmatrix} 0  \end{bmatrix}, \quad
	u(D) :=\begin{bmatrix} -u^{-1}  \end{bmatrix}. 
\end{equation}

\begin{lemma}\label{lem:Hv.rels}
The following (i), (ii) hold.
\begin{itemize}
\item[(i)] Let $0 \leq i \leq D-1$. 
Then $\mathrm{trace}(t(i)) = k-k^{-1}$ and $\det(t(i)) = -1$. Moreover,
\begin{equation*}
	(t(i)-k)(t(i)+k^{-1})=0, \qquad u'(i)(u'(i)+1) = 0.
\end{equation*}
\item[(ii)] Let $1 \leq i \leq D-1$.
Then $\mathrm{trace}(t'(i)) = k'-k'^{-1}$ and $\det(t'(i)) = -1$. Moreover,
\begin{equation*}
	(t'(i)-k')(t'(i)+k'^{-1})=0, \qquad u(i)(u(i)+u^{-1}) = 0. 
\end{equation*}
\end{itemize}
\end{lemma}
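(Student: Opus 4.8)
The plan is to verify Lemma \ref{lem:Hv.rels} by direct matrix computation, treating the $2\times 2$ cases and the degenerate $1\times 1$ cases separately. The key observation that organizes the work is that for a $2\times 2$ matrix $M$, the Cayley--Hamilton theorem gives $M^2 - \mathrm{trace}(M)\,M + \det(M)\,I = 0$, so any polynomial identity of the form $(M - \alpha)(M + \beta) = 0$ with $\alpha,\beta$ scalars is equivalent to the pair of scalar conditions $\mathrm{trace}(M) = \alpha - \beta$ and $\det(M) = -\alpha\beta$. Thus once I compute the trace and determinant of each matrix, the quadratic relations follow for free.

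First I would handle part (i). For $t(i)$ as in \eqref{matrices:t(i),u'(i)}, the trace of the inner matrix is $(1 - q^{i+1} + q^{N-D+1}) + q^{i+1} = 1 + q^{N-D+1}$, so $\mathrm{trace}(t(i)) = k(1 + q^{N-D+1})$; using $k = \sqrt{-1}\,q^{(D-N-1)/2}$ from \eqref{scalars:k,k',u} one checks $k^{-1} = -\sqrt{-1}\,q^{(N-D+1)/2} = -k q^{N-D+1}$, whence $k - k^{-1} = k(1 + q^{N-D+1})$, matching. For the determinant, the inner matrix has determinant $(1 - q^{i+1} + q^{N-D+1})q^{i+1} - q^{N-D+1}(q^{D-N+i}-1)(1 - q^{i+1})$; expanding and cancelling yields $q^{N-D+1}$ (the $q^i$ terms telescope), so $\det(t(i)) = k^2 q^{N-D+1} = (\sqrt{-1})^2 q^{D-N-1} q^{N-D+1} = -1$. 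Since $\alpha = k$, $\beta = k^{-1}$ satisfy $\alpha - \beta = k - k^{-1}$ and $-\alpha\beta = -1$, Cayley--Hamilton gives $(t(i)-k)(t(i)+k^{-1}) = 0$. For $u'(i)$, the matrix is already triangular with eigenvalues $-1$ and $0$, so $u'(i)(u'(i)+1)$ annihilates both eigenvalues; one can also just multiply the two $2\times 2$ matrices directly and get the zero matrix.

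Part (ii) is entirely analogous. For $t'(i)$ the inner-matrix trace is $(1 - q^i + q^D) + q^i = 1 + q^D$, so $\mathrm{trace}(t'(i)) = k'(1+q^D)$, and from $k' = \sqrt{-1}\,q^{-D/2}$ one has $k'^{-1} = -\sqrt{-1}\,q^{D/2} = -k'q^D$, giving $k' - k'^{-1} = k'(1+q^D)$. The determinant of the inner matrix is $(1 - q^i + q^D)q^i - (q^i - q^D)(1 - q^i)$, which simplifies to $q^D$ after the $q^i$ and $q^{2i}$ terms cancel, so $\det(t'(i)) = k'^2 q^D = -q^{-D}q^D = -1$; Cayley--Hamilton then yields $(t'(i)-k')(t'(i)+k'^{-1}) = 0$. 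For $u(i)$, the matrix $u^{-1}\begin{bmatrix} -1 & 1-q^{D-i} \\ 0 & 0 \end{bmatrix}$ is triangular with eigenvalues $-u^{-1}$ and $0$, so $u(i)(u(i)+u^{-1}) = 0$, which one also verifies by a direct $2\times 2$ multiplication. Finally, the boundary cases \eqref{matrices:t'(0,D),u(0,D)} are $1\times 1$ matrices: $t'(0) = t'(D) = [k']$ satisfies $(k' - k')(k' + k'^{-1}) = 0$ trivially, $u(0) = [0]$ satisfies $0 \cdot (0 + u^{-1}) = 0$, and $u(D) = [-u^{-1}]$ satisfies $(-u^{-1})(-u^{-1} + u^{-1}) = 0$.

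There is no real obstacle here; the only thing requiring care is the bookkeeping in the determinant expansions of the $2\times 2$ blocks of $t(i)$ and $t'(i)$, where one must confirm that all the $i$-dependent terms cancel and leave a constant ($q^{N-D+1}$ and $q^D$ respectively), together with keeping track of the branch conventions for the square roots in $k$, $k'$, $u$ so that $k^2 = q^{D-N-1}$, $k'^2 = q^{-D}$, and the reciprocals come out with the correct sign. Once those elementary identities are in place, the quadratic relations are immediate from Cayley--Hamilton.
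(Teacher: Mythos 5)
Your proposal is correct and follows essentially the same route as the paper, whose proof is simply the routine verification from the explicit matrices \eqref{matrices:t(i),u'(i)}--\eqref{matrices:t'(0,D),u(0,D)}; your use of Cayley--Hamilton (trace $=k-k^{-1}$, $\det=-1$) is just a tidy way of organizing that same direct computation, and your checks of the trace, determinant, and triangular $u'(i)$, $u(i)$ blocks (plus the $1\times 1$ boundary blocks, which lie outside the stated range but are harmless to include) are all accurate.
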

\begin{proof}
	Follows from \eqref{matrices:t(i),u'(i)} and \eqref{matrices:t'(i),u(i)}.
\end{proof}
Using the matrices \eqref{matrices:t(i),u'(i)}--\eqref{matrices:t'(0,D),u(0,D)}, define the block diagonal matrices in $\mathbb{C}^{2D\times 2D}$:
\begin{align*}
	&\pmb{T} := \mathrm{blockdiag}\Big[ t(0), t(1), \ldots , t(D-1) \Big], 
	&&\pmb{T}' := \mathrm{blockdiag}\Big[ t'(0), t'(1), \ldots , t'(D-1), t'(D) \Big], \\
	& \pmb{U}' := \mathrm{blockdiag}\Big[ u'(0), u'(1), \ldots , u'(D-1) \Big],
	&&{\pmb U} := \mathrm{blockdiag}\Big[ u(0), u(1), \ldots , u(D-1), u(D) \Big].
\end{align*}

\begin{lemma}\label{lem:mat.A*,wtA*}
We have both
\begin{align*}
	qk (q^{-1/2}\pmb{U}\pmb{T}' + \pmb{T}\pmb{U}') & = \mathrm{diag}\Big( 1, q^{-1}, q^{-1}, q^{-2}, q^{-2}, \ldots, q^{-(D-1)}, q^{-(D-1)}, q^{-D} \Big), \\
	qk (q^{1/2}\pmb{U}\pmb{T}' + \pmb{T}\pmb{U}') & = \mathrm{diag}\Big( 1, 1, q^{-1}, q^{-1}, q^{-2}, q^{-2}, \ldots, q^{-(D-1)}, q^{-(D-1)} \Big).
\end{align*}
\end{lemma}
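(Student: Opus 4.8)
The plan is to compute the two block-diagonal products directly, block by block. Both matrices $q^{-1/2}\pmb U\pmb T'+\pmb T\pmb U'$ and $q^{1/2}\pmb U\pmb T'+\pmb T\pmb U'$ are built from the generators $\pmb T,\pmb T',\pmb U,\pmb U'$, but one must be slightly careful about how the block structures line up: $\pmb T$ and $\pmb U'$ are blockdiag over the $D$ two-by-two blocks indexed $0,\dots,D-1$, whereas $\pmb T'$ and $\pmb U$ are blockdiag over the $D+1$ blocks $t'(0),u(0)$ (size $1$), then $D-1$ blocks of size $2$, then $t'(D),u(D)$ (size $1$). So the products $\pmb T\pmb U'$ and $\pmb U\pmb T'$ are not literally products of same-shaped blocks; rather, when one writes everything out in the fixed ordered basis $\mathcal C=\{\hat C^-_0,\hat C^+_0,\hat C^-_1,\hat C^+_1,\dots,\hat C^-_{D-1},\hat C^+_{D-1}\}$ of $2D$ vectors, the matrix $\pmb U$ acts on the consecutive pair $(\hat C^+_{i-1},\hat C^-_i)$ (an ``$\hat x$-type'' pair indexed by $\Gamma_i(x)$) via $u(i)$, while $\pmb T,\pmb U'$ act on the pair $(\hat C^-_i,\hat C^+_i)$ (a ``$C$-type'' pair indexed by $C_i$) via $t(i),u'(i)$. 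First I would make this indexing explicit — i.e., identify precisely which $2\times 2$ minor of the $2D\times 2D$ ambient matrix each of $t(i),t'(i),u(i),u'(i)$ occupies — since getting this bookkeeping right is the real content.

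Once the bookkeeping is fixed, the computation reduces to multiplying $2\times 2$ matrices (with two $1\times 1$ corrections at the two ends coming from \eqref{matrices:t'(0,D),u(0,D)}). For the bulk index range one computes $u(i)\,t'(i)$ and $t(i)\,u'(i)$ from \eqref{matrices:t(i),u'(i)} and \eqref{matrices:t'(i),u(i)}; each is a rank-one $2\times 2$ matrix (because $u'(i)$ and $u(i)$ are rank one), and forming $q^{\mp1/2}u(i)t'(i)+t(i)u'(i)$ should collapse — after using $kk'=-q^{(-N-1)/2}\cdot$(appropriate power), i.e. the numerical identities from \eqref{scalars:k,k',u} — to a diagonal matrix whose entries are the requested powers $q^{-j}$. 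I would present one such block computation in full and remark that the others are identical up to shifting $i$; the two endpoint blocks ($t'(0),u(0)$ and $t'(D),u(D)$ with $u(0)=0$, $t'(0)=t'(D)=k'$) are checked by hand and produce the first/last diagonal entries $1$ (resp. $q^{-D}$ and the doubled $1$).

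The only genuine obstacle is verifying that the off-diagonal entries of each $2\times2$ block of $q^{\mp1/2}u(i)t'(i)+t(i)u'(i)$ really vanish and that the two diagonal entries really coincide (so the block is scalar $q^{-j}I_2$, except that the ``shift by one'' between the two requested answers comes precisely from the factor $q^{\pm1/2}$ in front of $\pmb U\pmb T'$, which moves the doubled entry). This is a finite explicit check: expand $t(i)u'(i)=k\begin{bmatrix}-(1-q^{i+1}+q^{N-D+1})+q^{-i-1}q^{N-D+1}(q^{D-N+i}-1)&0\\-(1-q^{i+1})+q^{-i-1}q^{i+1}&0\end{bmatrix}$ and similarly $u(i)t'(i)=u^{-1}k'\begin{bmatrix}-(1-q^i+q^D)+(1-q^{D-i})(1-q^i)&\;-(q^i-q^D)+(1-q^{D-i})q^i\\0&0\end{bmatrix}$, then add with the weight $q^{\mp1/2}$ and simplify using $k=\sqrt{-1}\,q^{(D-N-1)/2}$, $k'=\sqrt{-1}\,q^{-D/2}$, $u=q^{D-N/2}$ so that $qk\cdot q^{\mp1/2}u^{-1}k'=-q^{\,(1+D-N-1)/2 \mp 1/2 -D+N/2 -D/2}=-q^{\mp1/2+\cdots}$ works out to $-1$ times the right power; I expect the cross-terms $(1-q^{D-i})(1-q^i)$ etc.\ to telescope so that the two surviving diagonal entries are $qk\cdot q^{-j}$ and $qk\cdot q^{-j-1}$ in one case and $qk\cdot q^{-j}$ twice in the other. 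After multiplying through by $(qk)^{-1}$ one reads off exactly the two stated diagonal matrices. I would organize the write-up as: (1) fix the basis and block indexing; (2) record the bulk-block product identity with its proof; (3) dispose of the two endpoint corrections; (4) assemble.
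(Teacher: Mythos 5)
Your overall strategy (verify the identity entrywise from the explicit block matrices \eqref{matrices:t(i),u'(i)}--\eqref{matrices:t'(0,D),u(0,D)}) is exactly what the paper does, but the middle of your plan does not hold together. You correctly note at the outset that $\pmb T,\pmb U'$ are block-diagonal over the pairs $(\hat C^-_i,\hat C^+_i)$ (ambient coordinates $2i+1,2i+2$) while $\pmb T',\pmb U$ are block-diagonal over the staggered pairs $(\hat C^+_{i-1},\hat C^-_i)$ (coordinates $2i,2i+1$) plus two $1\times1$ end blocks. But then your ``bulk-block product identity'' $q^{\mp1/2}u(i)t'(i)+t(i)u'(i)=q^{-j}I_2$ forgets this: $u(i)t'(i)$ and $t(i)u'(i)$ occupy \emph{different} $2\times2$ minors of the ambient matrix, overlapping only in the single coordinate $2i+1$, so adding them as $2\times2$ matrices is not a block of $q^{\mp1/2}\pmb U\pmb T'+\pmb T\pmb U'$. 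Moreover, since the possible off-diagonal positions of $\pmb U\pmb T'$ (namely $(2i,2i+1),(2i+1,2i)$) and of $\pmb T\pmb U'$ (namely $(2i+1,2i+2),(2i+2,2i+1)$) are disjoint, no cancellation between the two summands can ever occur; the only way the lemma can hold is that \emph{each} product $\pmb U\pmb T'$ and $\pmb T\pmb U'$ is separately diagonal, after which the stated diagonals are obtained by interleaving (odd coordinates fed by $\pmb T\pmb U'$, even coordinates by $q^{\mp1/2}\pmb U\pmb T'$, the factor $q^{\mp1/2}$ accounting for the shift between the two displays). Your proposal never isolates this, and instead relies on an expected ``telescoping'' that is structurally impossible.

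The gap is visible in your own displayed expansion: the $(2,1)$ entry of $t(i)u'(i)$ that you wrote, $k\bigl(-(1-q^{i+1})+q^{-i-1}q^{i+1}\bigr)=kq^{i+1}$, is not zero, it sits at the ambient position (row $\hat C^+_i$, column $\hat C^-_i$), and $\pmb U\pmb T'$ has no entry there to cancel it --- so the computation, carried out as you set it up with the matrices as printed, does not produce a diagonal matrix at all. Any honest write-up must confront this entry rather than assume it disappears. (As a pointer: if one instead reads the $(2,1)$ entry of $u'(i)$ as $q^{-i-1}-1$, which is what consistency with the defining relation $q^{1/2}\mathcal U\mathcal T'\mathcal T=\mathcal U'+1$ on $\mathbf W$ forces and which still satisfies $u'(i)(u'(i)+1)=0$, then one finds $t(i)u'(i)=k\,\mathrm{diag}(-q^{N-D-i},0)$ and $u(i)t'(i)=u^{-1}k'\,\mathrm{diag}(-q^{D-i},0)$, each genuinely diagonal, and the lemma follows by the interleaving argument together with $qk^2=-q^{D-N}$, $kk'=\tau$, $u^{-1}=q^{N/2-D}$.) So: fix the bookkeeping so that the two staggered block structures are never conflated, prove diagonality of each product separately, and resolve the nonvanishing $(2,1)$ entry explicitly instead of appealing to an expected collapse.
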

\begin{proof}
Use \eqref{matrices:t(i),u'(i)}--\eqref{matrices:t'(0,D),u(0,D)}.
The result routinely follows.
\end{proof}

\begin{proposition}\label{prop:alg-homo:HtoC(2D)}
	There exists a $\mathbb{C}$-algebra homomorphism from $\mathcal{H}_{\mathrm V}$ to the full matrix algebra $\mathbb{C}^{2D\times 2D}$ that sends
\begin{equation*}
	\mathcal{T} \mapsto \pmb T, \qquad \mathcal{T}' \mapsto \pmb{T}', \qquad \mathcal{U} \mapsto \pmb{U}, \qquad \mathcal{U}' \mapsto \pmb{U}'.
\end{equation*}
\end{proposition}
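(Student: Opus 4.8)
The plan is to appeal to the universal property of the presented algebra $\mathcal{H}_{\mathrm V}$. By Definition \ref{Def:CheAlgHv}, the assignment $\mathcal T \mapsto \pmb T$, $\mathcal T' \mapsto \pmb T'$, $\mathcal U \mapsto \pmb U$, $\mathcal U' \mapsto \pmb U'$ extends to a (necessarily unique) $\mathbb{C}$-algebra homomorphism $\mathcal{H}_{\mathrm V} \to \mathbb{C}^{2D\times 2D}$ if and only if the matrices defined in \eqref{matrices:t(i),u'(i)}--\eqref{matrices:t'(0,D),u(0,D)} satisfy the six defining relations \eqref{Def:algHv.rel(1)}--\eqref{Def:algHv.rel(3)}. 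So the whole proof reduces to verifying these six matrix identities in $\mathbb{C}^{2D\times 2D}$.

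The four relations in \eqref{Def:algHv.rel(1)} and \eqref{Def:algHv.rel(2)} are easy: each involves only one of $\pmb T, \pmb T', \pmb U, \pmb U'$, and each of these four matrices is block diagonal, so the relation holds provided it holds on every diagonal block. On the $2\times 2$ blocks $t(i), t'(i), u(i), u'(i)$ this is precisely Lemma \ref{lem:Hv.rels}(i),(ii); on the $1\times 1$ blocks $t'(0)=t'(D)=[k']$, $u(0)=[0]$, $u(D)=[-u^{-1}]$ the identities are immediate.

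The substance of the proof is the pair of mixed relations \eqref{Def:algHv.rel(3)}, and the bookkeeping there is where I expect the main difficulty. The obstruction is that $\pmb T$ and $\pmb U'$ are block diagonal with respect to the partition of the index set into the consecutive pairs $\{1,2\},\{3,4\},\dots$, whereas $\pmb T'$ and $\pmb U$ are block diagonal with respect to the shifted partition $\{1\},\{2,3\},\{4,5\},\dots,\{2D-2,2D-1\},\{2D\}$; hence the triple products $\pmb T'\pmb T\pmb U'$ and $\pmb U\pmb T'\pmb T$ are not block diagonal for either partition, and a naive entrywise verification is unpleasant. I would instead compute the two-fold products first: $\pmb U\pmb T'$ and $\pmb T\pmb U'$ are block diagonal (for the shifted and the consecutive-pair partitions, respectively), with blocks $u(i)t'(i)$ and $t(i)u'(i)$, and a direct calculation from \eqref{matrices:t(i),u'(i)}--\eqref{matrices:t'(0,D),u(0,D)} shows that these blocks collapse to a very simple form — this is essentially the computation recorded in Lemma \ref{lem:mat.A*,wtA*}. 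Substituting these simple forms into $\pmb T'(\pmb T\pmb U')$ and $(\pmb U\pmb T')\pmb T$ and using once more that $\pmb T'$ and $\pmb T$ are block diagonal, the two relations in \eqref{Def:algHv.rel(3)} reduce to a short list of elementary identities among powers of $q$; the normalizations in \eqref{scalars:k,k',u} — equivalently the consequences $kk'=-q^{(-N-1)/2}$ and $-q^{1/2}u^{-1}kk'q^{D}=1$ — are exactly what forces every scalar prefactor to cancel. Checking these $q$-identities block by block, together with a separate check on the two $1\times 1$ end blocks, completes the verification and hence the proof.
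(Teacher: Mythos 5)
Your proposal is correct and follows essentially the same route as the paper: the paper's proof likewise just verifies that $\pmb T$, $\pmb T'$, $\pmb U$, $\pmb U'$ satisfy the defining relations \eqref{Def:algHv.rel(1)}--\eqref{Def:algHv.rel(3)}, invoking Lemma \ref{lem:Hv.rels} for the quadratic relations and leaving the mixed relations \eqref{Def:algHv.rel(3)} as a routine block computation. Your extra detail on handling the two interleaved block structures (and the scalar identities forced by \eqref{scalars:k,k',u}) is simply a more explicit account of that same verification.
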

\begin{proof}
The matrices $\pmb T$, $\pmb T'$, $\pmb U$, $\pmb U'$ satisfy the defining relations \eqref{Def:algHv.rel(1)}--\eqref{Def:algHv.rel(3)} by Lemma \ref{lem:Hv.rels}. The result follows.
\end{proof}

\noindent
Recall the $2D$-dimensional subspace $\mathbf{W}$ and its ordered basis $\mathcal{C}$ (cf. \eqref{Basis.W}) from Section \ref{Section:algebraT}.

\begin{corollary}\label{cor:Hv-moduleW}
There exists an $\mathcal{H}_{\mathrm{V}}$-module structure on $\mathbf{W}$ such that the matrices representing $\mathcal{T}$, $\mathcal{T}'$, $\mathcal{U}$, $\mathcal{U}'$ with respect to the ordered basis $\mathcal{C}$ are $\pmb{T}$, $\pmb{T}'$, $\pmb{U}$, $\pmb{U}'$, respectively.
\end{corollary}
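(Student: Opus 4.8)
The plan is to obtain the corollary as an immediate consequence of Proposition \ref{prop:alg-homo:HtoC(2D)}. Since $\mathbf{W}$ is a $2D$-dimensional $\mathbb{C}$-vector space equipped with the ordered basis $\mathcal{C}$ of \eqref{Basis.W}, the map sending each $f \in \mathrm{End}(\mathbf{W})$ to its matrix representation with respect to $\mathcal{C}$ is a $\mathbb{C}$-algebra isomorphism $\mathrm{End}(\mathbf{W}) \xrightarrow{\sim} \mathbb{C}^{2D\times 2D}$. I would precompose the inverse of this isomorphism with the $\mathbb{C}$-algebra homomorphism $\mathcal{H}_{\mathrm V} \to \mathbb{C}^{2D\times 2D}$ supplied by Proposition \ref{prop:alg-homo:HtoC(2D)}, obtaining a $\mathbb{C}$-algebra homomorphism $\rho\colon \mathcal{H}_{\mathrm V} \to \mathrm{End}(\mathbf{W})$. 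By definition such a homomorphism is exactly an $\mathcal{H}_{\mathrm V}$-module structure on $\mathbf{W}$, and by construction $\rho(\mathcal{T})$, $\rho(\mathcal{T}')$, $\rho(\mathcal{U})$, $\rho(\mathcal{U}')$ are the operators whose matrices relative to $\mathcal{C}$ are $\pmb{T}$, $\pmb{T}'$, $\pmb{U}$, $\pmb{U}'$, which is the assertion.

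There is essentially no obstacle: all the content lies in Proposition \ref{prop:alg-homo:HtoC(2D)}, hence in Lemma \ref{lem:Hv.rels} and the block-diagonal bookkeeping, and the corollary is merely the translation of that result through the chosen basis. The one point worth noting is that the ordering of $\mathcal{C}$ — namely $\hat{C}^-_0, \hat{C}^+_0, \hat{C}^-_1, \hat{C}^+_1, \ldots, \hat{C}^-_{D-1}, \hat{C}^+_{D-1}$ — is compatible with the block structure of $\pmb{T}$, $\pmb{T}'$, $\pmb{U}$, $\pmb{U}'$, so that the $2\times 2$ blocks $t(i)$, $u'(i)$ and the blocks $t'(i)$, $u(i)$ (with the $1\times 1$ corner blocks $t'(0)$, $t'(D)$, $u(0)$, $u(D)$) occupy the coordinates one expects; this is already built into the definitions \eqref{matrices:t(i),u'(i)}--\eqref{matrices:t'(0,D),u(0,D)}, and nothing further needs to be checked. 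I would therefore present the proof in one or two sentences, referring back to Proposition \ref{prop:alg-homo:HtoC(2D)}.
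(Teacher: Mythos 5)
Your proposal is correct and matches the paper's own argument: the paper likewise identifies $\mathrm{End}(\mathbf{W})$ with $\mathbb{C}^{2D\times 2D}$ via the ordered basis $\mathcal{C}$ and pulls the homomorphism of Proposition \ref{prop:alg-homo:HtoC(2D)} back to a representation of $\mathcal{H}_{\mathrm V}$ on $\mathbf{W}$. Your extra remark about the compatibility of the ordering of $\mathcal{C}$ with the block structure is a harmless elaboration of what the paper leaves implicit.
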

\begin{proof}
Identifying $\mathrm{End}(\mathbf{W})$ with $\mathbb{C}^{2D\times 2D}$, we obtain a representation of $\mathcal{H}_\mathrm{V}$ on $\mathbf{W}$ by Proposition \ref{prop:alg-homo:HtoC(2D)}.
The result follows.
\end{proof}
By the comments below \eqref{Basis.W} and Corollary \ref{cor:Hv-moduleW},  the space $\mathbf{W}$ has a module structure for both $\mathbf{T}$ and $\mathcal{H}_{\mathrm V}$.
We shall discuss how the $\mathcal{H}_{\mathrm V}$-action on $\mathbf{W}$ is related to the $\mathbf{T}$-action on $\mathbf{W}$.
Recall the scalar $\tau$ from \eqref{scalar;tau}. Observe that $\tau=kk'$.
For notational convenience, we define
\begin{equation}\label{eq:X}
	\mathcal{X} := \mathcal{T}'\mathcal{T}.
\end{equation}
Observe that $\mathcal{X}$ is invertible since $\mathcal{T}'$, $\mathcal{T}$ are invertible.
We also define 
\begin{equation*}\label{eq:A,A*,wtA*}
	\mathcal{A}  := \mathcal{X}+\mathcal{X}^{-1}, \qquad \mathcal{A}^* := qk(q^{-1/2}\mathcal{U}\mathcal{T}'+\mathcal{T}\mathcal{U}'),  \qquad \widetilde{\mathcal{A}}^* := qk(q^{1/2}\mathcal{U}\mathcal{T}'+\mathcal{T}\mathcal{U}').
\end{equation*}
We give the actions of the elements $\mathcal{A}$, $\mathcal{A}^*$, $\widetilde{\mathcal{A}}^*$ of $\mathcal{H}_\mathrm{V}$ on $\mathbf{W}$ with the basis $\mathcal{C}$.
\begin{lemma}\label{lem:actionX}
The following (i), (ii) hold.
\begin{itemize}
\item[(i)]
The actions of $\mathcal{X}$ and $\mathcal{X}^{-1}$ on $\hat{C}^-_i$, $0 \leq i \leq D-1$, are given as linear combination with the following terms and coefficients.
\begin{align*}
	& \mathcal{X}.\hat{C}^-_i : && \mathcal{X}^{-1}.\hat{C}^-_i : \\
	&\begin{tabular}{c|c}
	term & coefficient \\
	\hline \hline \\ [-0.8em]
	$\hat C^+_{i-1}$ & $\tau(q^i-q^D)(q^{N-D+1}-q^{i+1}+1) $\\[0.5em]
	$\hat C^-_{i}$ & $\tau q^i(q^{N-D+1}-q^{i+1}+1)$ \\[0.5em]
	$\hat C^+_{i}$ & $\tau (1-q^{i+1})(q^D-q^{i+1}+1)$ \\[0.5em]
	$\hat C^-_{i+1}$ & $\tau (q^{i+1}-1)^2$ \\[0.5em]
	\end{tabular} \quad , 
	&&
	\begin{tabular}{c|c}
	term & coefficient \\
	\hline \hline \\ [-0.8em]
	$\hat{C}^-_{i-1}$ & $\tau(1-q^{i-D})(q^{N+1}-q^{D+i}) $\\[0.5em]
	$\hat C^+_{i-1}$ & $\tau (q^D-q^{i})(q^{N-D+1}-q^i+1)$ \\[0.5em]
	$\hat C^-_{i}$ & $\tau q^{i+1}(q^D-q^i+1)$ \\[0.5em]
	$\hat C^+_{i}$ & $\tau (q^{i+1}-1)(q^D-q^i+1)$ \\[0.5em]
	\end{tabular}.
\end{align*} 
\item[(ii)]
The actions of $\mathcal{X}$ and $\mathcal{X}^{-1}$ on $\hat{C}^+_i$, $0 \leq i \leq D-1$, are given as linear combination with the following terms and coefficients.
\begin{align*}
	& \mathcal{X}.\hat{C}^+_i : && \mathcal{X}^{-1}.\hat{C}^+_i : \\
	& \begin{tabular}{c|c}
	term & coefficient \\
	\hline \hline \\ [-0.8em]
	$\hat C^+_{i-1}$ & $\tau q^{N+1}(q^{i-D}-1)(q^{D-N+i}-1)$\\[0.5em]
	$\hat C^-_{i}$ & $\tau q^{N-D+1+i}(q^{D-N+i}-1)$ \\[0.5em]
	$\hat C^+_{i}$ & $\tau q^{i+1}(q^D-q^{i+1}+1)$ \\[0.5em]
	$\hat C^-_{i+1}$ & $\tau q^{i+1}(1-q^{i+1})$ \\[0.5em]
	\end{tabular} \quad , 
	&&\begin{tabular}{c|c}
	term & coefficient \\
	\hline \hline \\ [-0.8em]
	$\hat C^-_{i}$ & $\tau q^{N-D+2+i}(1-q^{D-N+i})$\\[0.5em]
	$\hat C^+_{i}$ & $\tau q^{i+1}(q^{N-D+1}-q^{i+1}+1)$ \\[0.5em]
	$\hat C^-_{i+1}$ & $\tau q^{i+2}(q^{i+1}-1)$ \\[0.5em]
	$\hat C^+_{i+1}$ & $\tau (q^{i+1}-1)(q^{i+2}-1)$ \\[0.5em]
	\end{tabular}.
\end{align*} 
\end{itemize}
\end{lemma}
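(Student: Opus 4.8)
The plan is to compute the matrix of $\mathcal{X} = \mathcal{T}'\mathcal{T}$ (and its inverse) with respect to the ordered basis $\mathcal{C}$ by working inside the block structure, then translate the resulting $2\times 2$ blocks into the term/coefficient tables. Recall from Corollary \ref{cor:Hv-moduleW} that $\mathcal{T}$ and $\mathcal{T}'$ act on $\mathbf{W}$ as the block diagonal matrices $\pmb T$ and $\pmb T'$. The subtlety is that $\pmb T$ is blocked as $\mathrm{blockdiag}[t(0),\ldots,t(D-1)]$ with each block of size $2$, indexed by the pairs $(\hat C^-_i,\hat C^+_i)$ for $0\le i\le D-1$, whereas $\pmb T'$ is blocked as $\mathrm{blockdiag}[t'(0),t'(1),\ldots,t'(D-1),t'(D)]$ with $t'(0),t'(D)$ of size $1$ (acting on $\hat C^-_0$ and $\hat C^+_{D-1}$ respectively) and the interior $t'(i)$ of size $2$, indexed by the pairs $(\hat C^+_{i-1},\hat C^-_i)$ for $1\le i\le D-1$. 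Thus the two block decompositions are \emph{staggered}: the product $\pmb T'\pmb T$ is tridiagonal in the pair-grouping but not block diagonal, which is exactly why $\mathcal{X}$ has a four-term action.

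First I would fix an index $i$ with $1\le i\le D-2$ (the generic case) and restrict attention to the five basis vectors $\hat C^+_{i-1},\hat C^-_i,\hat C^+_i,\hat C^-_{i+1}$ that can appear. Apply $\mathcal T$ first: on $\hat C^-_i$ and $\hat C^+_i$ the matrix $\pmb T$ acts through the single block $t(i)$ from \eqref{matrices:t(i),u'(i)}, so $\mathcal T.\hat C^-_i = k(1-q^{i+1}+q^{N-D+1})\hat C^-_i + k(1-q^{i+1})\hat C^+_i$. Next apply $\mathcal T'$: now $\hat C^-_i$ sits in the block $t'(i)$ paired with $\hat C^+_{i-1}$, while $\hat C^+_i$ sits in the block $t'(i+1)$ paired with $\hat C^-_{i+1}$. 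Reading off the relevant columns of \eqref{matrices:t'(i),u(i)} and multiplying by $k$ gives the four coefficients, which one then simplifies using $\tau = kk'$ (recorded just above \eqref{eq:X}) and $k^2 = -q^{D-N-1}$, $k'^2=-q^{-D}$ from \eqref{scalars:k,k',u}. The same recipe applied to $\hat C^+_i$ yields part (ii). For $\mathcal X^{-1}$, either invert the $2\times 2$ blocks directly — using $\det t(i)=-1$, $\det t'(i)=-1$ from Lemma \ref{lem:Hv.rels}, so $t(i)^{-1}=-(t(i)-\mathrm{trace}(t(i))\,\mathrm{Id})$ by Cayley–Hamilton — or compute $\mathcal X^{-1}.\hat C^\pm_i$ directly as $\mathcal T^{-1}\mathcal T'^{-1}.\hat C^\pm_i$ in the analogous staggered fashion.

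Second, I would handle the boundary cases separately: $i=0$ (where $\hat C^-_0$ lies in the $1\times 1$ block $t'(0)=[k']$, so the $\hat C^-_{i-1}$ and $\hat C^+_{i-1}$ terms are absent) and $i=D-1$ (where $\hat C^+_{D-1}$ lies in the $1\times 1$ block $t'(D)=[k']$, so the $\hat C^+_{i+1}$ and $\hat C^-_{i+1}$ terms are absent). In both cases the tables are consistent once one adopts the convention $\hat C^\pm_{-1}=\hat C^\pm_D=0$, and the relevant coefficients specialize correctly because the exponents $q^{i-D}-1$, $q^{i+1}-1$ etc.\ vanish at the appropriate endpoints; one should check that no spurious nonzero term survives. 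As a consistency check, I would verify that $\mathcal X.\hat C + \mathcal X^{-1}.\hat C$ reproduces the action of $\mathcal A = \mathcal X + \mathcal X^{-1}$ and, after the identification with $A$ to be established later, matches Lemma \ref{lem:actionAdjMatA} up to the affine reparametrization $\lambda = a + b\tau\zeta^{-1} + c\tau^{-1}\zeta$.

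The main obstacle is purely bookkeeping rather than conceptual: keeping the two staggered block decompositions straight so that each application of $\pmb T$ and $\pmb T'$ reads from the correct block and the correct column, and then reconciling the four-term outputs with the sign and $q$-power conventions baked into $\tau$, $k$, $k'$ via \eqref{scalars:k,k',u}. The boundary blocks $t'(0),t'(D)$ of size one are the easiest place to make an off-by-one error, so those deserve the most care.
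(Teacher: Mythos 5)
Your plan is essentially the paper's own proof, which dismisses the verification as routine from the block matrices \eqref{matrices:t(i),u'(i)}--\eqref{matrices:t'(0,D),u(0,D)} together with Corollary \ref{cor:Hv-moduleW}; your identification of the two staggered block decompositions of $\pmb{T}$ and $\pmb{T}'$ (with the $1\times 1$ boundary blocks $t'(0)$, $t'(D)$ on $\hat{C}^-_0$, $\hat{C}^+_{D-1}$) is exactly the bookkeeping needed, and it reproduces the tabulated coefficients via $\tau=kk'$. One small correction: since $\det t(i)=\det t'(i)=-1$, Cayley--Hamilton gives $t(i)^{-1}=t(i)-\mathrm{trace}(t(i))\,\mathrm{Id}$, not its negative; this slip does not affect your alternative route of computing $\mathcal{X}^{-1}=\mathcal{T}^{-1}\mathcal{T}'^{-1}$ directly in the same staggered fashion.
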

\begin{proof}
Routine using \eqref{matrices:t(i),u'(i)}--\eqref{matrices:t'(0,D),u(0,D)} and Corollary \ref{cor:Hv-moduleW}.
\end{proof}

\begin{lemma}\label{lem:actionA}
The following (i), (ii) hold.
\begin{itemize}
\item[(i)] The action of $\mathcal{A}$ on $\hat{C}^{\pm}_i$, $0 \leq i \leq D-1$, is given as linear combination with the following terms and coefficients.
\begin{align*}
	& \mathcal{A}.\hat{C}^-_i : && \mathcal{A}.\hat{C}^+_i : \\
	& \begin{tabular}{c|c}
	term & coefficient \\
	\hline \hline \\ [-0.8em]
	$\hat C^-_{i-1}$ & $\tau q^{2i}(q^{D-i}-1)(q^{N-D-i+1}-1)$\\[0.5em]
	$\hat C^+_{i-1}$ & $\tau q^{2i}(q-1)(q^{D-i}-1)$\\[0.5em]
	$\hat C^-_{i}$ & $\tau q^i(q^{N-D+1}+q^{D+1}-2q^{i+1}+q+1)$ \\[0.5em]
	$\hat C^+_{i}$ & $\tau q^i(q-1)(q^{i+1}-1)$ \\[0.5em]
	$\hat C^-_{i+1}$ & $\tau (q^{i+1}-1)^2$ \\[0.5em]
	\end{tabular} \quad , 
	&& \begin{tabular}{c|c}
	term & coefficient \\
	\hline \hline \\ [-0.8em]
	$\hat C^+_{i-1}$ & $\tau q^{2i+1}(q^{D-i}-1)(q^{N-D-i}-1)$\\[0.5em]
	$\hat C^-_{i}$ & $\tau q^{2i+1}(q-1)(q^{N-D-i}-1)$\\[0.5em]
	$\hat C^+_{i}$ & $\tau q^{i+1}(q^{N-D+1}+q^D-2q^{i+1}+2)$ \\[0.5em]
	$\hat C^-_{i+1}$ & $\tau q^{i+1}(q-1)(q^{i+1}-1)$ \\[0.5em]
	$\hat C^+_{i+1}$ & $\tau (q^{i+1}-1)(q^{i+2}-1)$ \\[0.5em]
	\end{tabular}.
\end{align*} 

\item[(ii)] The actions of $\mathcal{A}^*$ and $\widetilde{\mathcal{A}}^{*}$ on $\hat{C}^\pm_i$, $0 \leq i \leq D-1$, are given by
\begin{align*}
	&&& \mathcal{A}^*.\hat{C}^-_i = q^{-i}\hat{C}^-_i, && \mathcal{A}^*.\hat{C}^+_i = q^{-i-1}\hat{C}^+_i, &&\\
	&&& \widetilde{\mathcal{A}}^*.\hat{C}^-_i = q^{-i}\hat{C}^-_i, && \widetilde{\mathcal{A}}^*.\hat{C}^+_i = q^{-i}\hat{C}^+_i. 
\end{align*}
\end{itemize}
\end{lemma}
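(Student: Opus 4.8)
The plan is to obtain both parts as short consequences of results already established. Part (ii) is essentially immediate: Lemma \ref{lem:mat.A*,wtA*} records the two products $qk(q^{-1/2}\pmb U\pmb T'+\pmb T\pmb U')$ and $qk(q^{1/2}\pmb U\pmb T'+\pmb T\pmb U')$ as explicit diagonal matrices in $\mathbb{C}^{2D\times 2D}$, and by Corollary \ref{cor:Hv-moduleW} these are precisely the matrices representing $\mathcal A^*$ and $\widetilde{\mathcal A}^*$ with respect to the ordered basis $\mathcal C=(\hat C^-_0,\hat C^+_0,\hat C^-_1,\hat C^+_1,\ldots,\hat C^-_{D-1},\hat C^+_{D-1})$ of $\mathbf W$ (recall $\mathcal X=\mathcal T'\mathcal T$, $\mathcal A^*=qk(q^{-1/2}\mathcal U\mathcal T'+\mathcal T\mathcal U')$, $\widetilde{\mathcal A}^*=qk(q^{1/2}\mathcal U\mathcal T'+\mathcal T\mathcal U')$). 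Reading the $(2i+1)$-st and $(2i+2)$-nd diagonal entries of the first matrix against this ordering yields $\mathcal A^*.\hat C^-_i=q^{-i}\hat C^-_i$ and $\mathcal A^*.\hat C^+_i=q^{-i-1}\hat C^+_i$, and doing the same with the second matrix yields $\widetilde{\mathcal A}^*.\hat C^\pm_i=q^{-i}\hat C^\pm_i$.

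For part (i), I would start from Lemma \ref{lem:actionX}, which gives the expansions of $\mathcal X.\hat C^\pm_i$ and $\mathcal X^{-1}.\hat C^\pm_i$ in the basis $\mathcal C$, and simply add them coefficient-by-coefficient using $\mathcal A=\mathcal X+\mathcal X^{-1}$. For $\hat C^-_i$ the basis vectors appearing are $\hat C^-_{i-1}$ (only from $\mathcal X^{-1}$), $\hat C^+_{i-1}$, $\hat C^-_i$, $\hat C^+_i$ (each from both $\mathcal X$ and $\mathcal X^{-1}$), and $\hat C^-_{i+1}$ (only from $\mathcal X$); the situation for $\hat C^+_i$ is analogous. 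So the only computation is a handful of Laurent-polynomial identities in $q,N,D$ — for instance, for the $\hat C^+_{i-1}$-coefficient of $\mathcal A.\hat C^-_i$ one checks that $\tau(q^i-q^D)(q^{N-D+1}-q^{i+1}+1)+\tau(q^D-q^i)(q^{N-D+1}-q^i+1)$ collapses to $\tau q^{2i}(q-1)(q^{D-i}-1)$, and the remaining entries of both tables are verified the same way.

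The only real obstacle is the bookkeeping: each target coefficient is recovered as a sum of two factored expressions of comparable size, and one must keep track of the boundary indices $i=0$ and $i=D-1$, where some of the $\mathcal X^{\pm1}$-terms vanish (because $\hat C^\pm_{-1}=\hat C^\pm_D=0$) and the one-dimensional blocks \eqref{matrices:t'(0,D),u(0,D)} are in force. Since everything is elementary algebra, I would carry out the termwise simplification and cite Lemma \ref{lem:actionX} for the inputs to part (i) and Lemma \ref{lem:mat.A*,wtA*} together with Corollary \ref{cor:Hv-moduleW} for part (ii).
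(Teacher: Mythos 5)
Your proposal is correct and follows exactly the paper's route: part (i) is obtained by adding the $\mathcal{X}$- and $\mathcal{X}^{-1}$-expansions from Lemma \ref{lem:actionX} via $\mathcal{A}=\mathcal{X}+\mathcal{X}^{-1}$ (your sample coefficient check is right), and part (ii) is read off from the diagonal matrices in Lemma \ref{lem:mat.A*,wtA*} via Corollary \ref{cor:Hv-moduleW}. The paper's own proof is just a terser statement of the same argument.
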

\begin{proof}
(i): Routine using Lemma \ref{lem:actionX}. \\
(ii): By Lemma \ref{lem:mat.A*,wtA*}.
\end{proof}

\begin{theorem}\label{thm:1stResult}
Recall the generators $A$, $A^*$, $\widetilde A^*$ of $\mathbf{T}$ and the elements $\mathcal{A}$, $\mathcal{A}^*$, $\widetilde{\mathcal{A}}^*$ of $\mathcal{H}_\mathrm{V}$.
On $\mathbf{W}$, we have
\begin{align}
	&A = \tau b \mathcal{A} + a, \label{thm;eq(1);A,A*,wtA*} \\
	&A^*  = b^* \mathcal{A}^* + a^*, \label{thm;eq(2);A,A*,wtA*} \\
	&\widetilde A^*  = \widetilde b^* \widetilde{\mathcal{A}}^* + \widetilde a^*,\label{thm;eq(3);A,A*,wtA*} 
\end{align}
where $\tau$ is from \eqref{scalar;tau} and $a$,  $a^*$, $\widetilde a^*$, $b$, $b^*$, $\widetilde b^*$ are from Propositions \ref{prop:LSonMx,Mxp}(I) and \ref{prop:LSonMC,MCp}(I).
\end{theorem}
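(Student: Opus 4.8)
The plan is to verify each of the three identities \eqref{thm;eq(1);A,A*,wtA*}--\eqref{thm;eq(3);A,A*,wtA*} by comparing the action of both sides on the ordered basis $\mathcal{C}$ of $\mathbf{W}$. Since $\mathcal{C}$ is a basis, it suffices to check that the two sides agree on each $\hat{C}^\pm_i$, $0 \leq i \leq D-1$. For the dual-adjacency identities \eqref{thm;eq(2);A,A*,wtA*} and \eqref{thm;eq(3);A,A*,wtA*} this will be essentially immediate: Lemma \ref{lem:actionMatA*,wtA*} gives the action of $A^*$ and $\widetilde A^*$ (with eigenvalues $\theta^*_i$, $\theta^*_{i+1}$ and $\widetilde\theta^*_i$ from \eqref{Gr.dualEigval}, \eqref{Gr.dualEigval.C}), while Lemma \ref{lem:actionA}(ii) gives $\mathcal{A}^*.\hat{C}^-_i = q^{-i}\hat{C}^-_i$, $\mathcal{A}^*.\hat{C}^+_i = q^{-i-1}\hat{C}^+_i$, and $\widetilde{\mathcal{A}}^*.\hat{C}^\pm_i = q^{-i}\hat{C}^\pm_i$. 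So one only needs the scalar identities $\theta^*_i = b^*q^{-i}+a^*$ and $\widetilde\theta^*_i = \widetilde b^* q^{-i}+\widetilde a^*$; the first follows directly from the displayed form of $\theta^*_i$ in \eqref{Gr.dualEigval} together with the values $a^*$, $b^*$ in Proposition \ref{prop:LSonMx,Mxp}(I), and the second from \eqref{Gr.dualEigval.C} together with the values $\widetilde a^*$, $\widetilde b^*$ in Proposition \ref{prop:LSonMC,MCp}(I). These are one-line checks once the formulas are lined up.

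For the adjacency identity \eqref{thm;eq(1);A,A*,wtA*}, the approach is the same but the bookkeeping is heavier. Lemma \ref{lem:actionAdjMatA} gives the five-term expansion of $A.\hat{C}^\pm_i$ in terms of $\hat{C}^\pm_{i-1}$, $\hat{C}^\pm_i$, $\hat{C}^\pm_{i+1}$ with explicit Gaussian-binomial coefficients, and Lemma \ref{lem:actionA}(i) gives the five-term expansion of $\mathcal{A}.\hat{C}^\pm_i$ with coefficients written in the $\tau q^{\cdots}$ normalization. I would multiply the $\mathcal{A}$-coefficients by $\tau b$ (with $\tau = -q^{(-N-1)/2}$ from \eqref{scalar;tau} and $b = q^{N+1}/(q-1)^2$ from Proposition \ref{prop:LSonMx,Mxp}(I), so that $\tau^2 b = -q^{N/2}\cdot q^{(-N-1)/2}\cdot\dots$ — in fact $\tau b\cdot\tau = \tau^2 b$, and one checks $\tau b$ times each tabulated coefficient, which itself carries a factor $\tau$, yields $\tau^2 b = q^N/(q-1)^2 \cdot$ sign, clearing the $q^{(\cdot)/2}$ powers), add the diagonal shift $a$ on the $\hat{C}^\pm_i$ term, and confront the result term-by-term with Lemma \ref{lem:actionAdjMatA}. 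Converting $\gauss{m}{1} = (q^m-1)/(q-1)$ everywhere, each of the five coefficient matches (for $\hat{C}^-$) and five (for $\hat{C}^+$) reduces to a polynomial identity in $q$, $q^N$, $q^D$, $q^i$ that can be expanded and checked directly; the diagonal terms are where the constant $a = (q-q^{N-D+1}-q^{D+1}-1)/(q-1)^2$ enters, and this is the one place a small amount of care is needed to see that $\tau^2 b\cdot[\text{tabulated diagonal coeff}] + a$ equals the $\hat{C}^-_i$ (resp. $\hat{C}^+_i$) coefficient $q\gauss{D}{1}\gauss{N-D}{1} - q^{2i+1}\gauss{D-i}{1}\gauss{N-D-i}{1} - \gauss{i+1}{1}\gauss{i}{1}$ in Lemma \ref{lem:actionAdjMatA}.

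The main obstacle is purely computational: matching the diagonal coefficient in \eqref{thm;eq(1);A,A*,wtA*}, where one must reconcile the compact expression $\tau q^i(q^{N-D+1}+q^{D+1}-2q^{i+1}+q+1)$ (scaled by $\tau b$ and offset by $a$) with the expanded triple-product expression from Lemma \ref{lem:actionAdjMatA}; the cross-terms $q\gauss{D}{1}\gauss{N-D}{1}$ and $\gauss{i+1}{1}\gauss{i}{1}$ produce several monomials that must cancel against the $a$-shift and the $-2q^{i+1}$ cross-term. I expect this to be a routine but slightly tedious expansion, and I would relegate it to ``a direct computation,'' noting only that $\tau^2 b = q^{N}/(1-q)^2$ is the key normalization constant that makes the half-integer powers of $q$ disappear. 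The off-diagonal matches ($\hat{C}^\pm_{i\pm1}$ coefficients) are comparatively clean factored expressions and should fall out immediately after the substitution $\gauss{m}{1}=(q^m-1)/(q-1)$.

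\begin{proof}
We verify \eqref{thm;eq(1);A,A*,wtA*}--\eqref{thm;eq(3);A,A*,wtA*} by comparing the actions of both sides on the ordered basis $\mathcal{C}$ of $\mathbf{W}$.

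For \eqref{thm;eq(2);A,A*,wtA*}: by Lemma \ref{lem:actionA}(ii), $(b^*\mathcal{A}^*+a^*).\hat{C}^-_i = (b^*q^{-i}+a^*)\hat{C}^-_i$ and $(b^*\mathcal{A}^*+a^*).\hat{C}^+_i = (b^*q^{-i-1}+a^*)\hat{C}^+_i$. By \eqref{Gr.dualEigval} and the values of $a^*$, $b^*$ in Proposition \ref{prop:LSonMx,Mxp}(I), we have $\theta^*_i = b^*q^{-i}+a^*$ for $0\leq i \leq D$. Hence $(b^*\mathcal{A}^*+a^*).\hat{C}^-_i = \theta^*_i\hat{C}^-_i$ and $(b^*\mathcal{A}^*+a^*).\hat{C}^+_i = \theta^*_{i+1}\hat{C}^+_i$, which by Lemma \ref{lem:actionMatA*,wtA*} equals $A^*.\hat{C}^-_i$ and $A^*.\hat{C}^+_i$ respectively. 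This proves \eqref{thm;eq(2);A,A*,wtA*}.

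For \eqref{thm;eq(3);A,A*,wtA*}: by Lemma \ref{lem:actionA}(ii), $(\widetilde b^*\widetilde{\mathcal{A}}^*+\widetilde a^*).\hat{C}^\pm_i = (\widetilde b^*q^{-i}+\widetilde a^*)\hat{C}^\pm_i$. By \eqref{Gr.dualEigval.C} and the values of $\widetilde a^*$, $\widetilde b^*$ in Proposition \ref{prop:LSonMC,MCp}(I), we have $\widetilde\theta^*_i = \widetilde b^*q^{-i}+\widetilde a^*$ for $0\leq i \leq D-1$. Hence $(\widetilde b^*\widetilde{\mathcal{A}}^*+\widetilde a^*).\hat{C}^\pm_i = \widetilde\theta^*_i\hat{C}^\pm_i$, which by Lemma \ref{lem:actionMatA*,wtA*} equals $\widetilde A^*.\hat{C}^\pm_i$. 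This proves \eqref{thm;eq(3);A,A*,wtA*}.

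For \eqref{thm;eq(1);A,A*,wtA*}: recall from \eqref{scalar;tau} that $\tau = -q^{(-N-1)/2}$, and from Proposition \ref{prop:LSonMx,Mxp}(I) that $b = q^{N+1}/(q-1)^2$, so $\tau^2 b = q^{-N-1}\cdot q^{N+1}/(q-1)^2 = 1/(q-1)^2$; also $a = (q-q^{N-D+1}-q^{D+1}-1)/(q-1)^2$. Using $\gauss{m}{1} = (q^m-1)/(q-1)$, we compute the action of $\tau b\mathcal{A}+a$ on $\hat{C}^-_i$ via Lemma \ref{lem:actionA}(i). The coefficient of $\hat{C}^-_{i-1}$ is $\tau b\cdot\tau q^{2i}(q^{D-i}-1)(q^{N-D-i+1}-1) = \tfrac{q^{2i}(q^{D-i}-1)(q^{N-D+1-i}-1)}{(q-1)^2} = q^{2i}\gauss{D-i}{1}\gauss{N-D+1-i}{1}$, matching Lemma \ref{lem:actionAdjMatA}. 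The coefficient of $\hat{C}^+_{i-1}$ is $\tau b\cdot\tau q^{2i}(q-1)(q^{D-i}-1) = \tfrac{q^{2i}(q^{D-i}-1)}{q-1} = q^{2i}\gauss{D-i}{1}$, again matching. The coefficient of $\hat{C}^+_i$ is $\tau b\cdot\tau q^i(q-1)(q^{i+1}-1) = \tfrac{q^i(q^{i+1}-1)}{q-1} = q^i\gauss{i+1}{1}$, matching. The coefficient of $\hat{C}^-_{i+1}$ is $\tau b\cdot\tau(q^{i+1}-1)^2 = \tfrac{(q^{i+1}-1)^2}{(q-1)^2} = \gauss{i+1}{1}^2$, matching. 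Finally the coefficient of $\hat{C}^-_i$ is $\tau b\cdot\tau q^i(q^{N-D+1}+q^{D+1}-2q^{i+1}+q+1) + a = \tfrac{q^i(q^{N-D+1}+q^{D+1}-2q^{i+1}+q+1) + q - q^{N-D+1} - q^{D+1} - 1}{(q-1)^2}$; a direct expansion shows this equals $q\gauss{D}{1}\gauss{N-D}{1} - q^{2i+1}\gauss{D-i}{1}\gauss{N-D-i}{1} - \gauss{i+1}{1}\gauss{i}{1}$, as in Lemma \ref{lem:actionAdjMatA}. The computation for $\hat{C}^+_i$ is entirely analogous: the coefficient of $\hat{C}^+_{i-1}$ becomes $q^{2i+1}\gauss{D-i}{1}\gauss{N-D-i}{1}$, that of $\hat{C}^-_i$ becomes $q^{2i+1}\gauss{N-D-i}{1}$, that of $\hat{C}^-_{i+1}$ becomes $q^{i+1}\gauss{i+1}{1}$, that of $\hat{C}^+_{i+1}$ becomes $\gauss{i+2}{1}\gauss{i+1}{1}$, and the diagonal coefficient $\tau^2 b\, q^{i+1}(q^{N-D+1}+q^D-2q^{i+1}+2) + a$ simplifies to $q\gauss{D}{1}\gauss{N-D}{1} - q^{2i+2}\gauss{D-1-i}{1}\gauss{N-D-i}{1} - \gauss{i+1}{1}^2$. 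In every case the result agrees with Lemma \ref{lem:actionAdjMatA}, so $\tau b\mathcal{A}+a = A$ on $\mathbf{W}$. This proves \eqref{thm;eq(1);A,A*,wtA*}.
\end{proof}
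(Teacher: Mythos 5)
Your proposal is correct and follows essentially the same route as the paper, whose proof simply observes that \eqref{thm;eq(1);A,A*,wtA*} follows by comparing Lemma \ref{lem:actionAdjMatA} with Lemma \ref{lem:actionA}(i), and \eqref{thm;eq(2);A,A*,wtA*}, \eqref{thm;eq(3);A,A*,wtA*} by comparing Lemma \ref{lem:actionMatA*,wtA*} with Lemma \ref{lem:actionA}(ii). You have merely written out the coefficient-by-coefficient verification (correctly, including the key normalization $\tau^2 b = 1/(q-1)^2$ and the diagonal match) that the paper leaves implicit.
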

\begin{proof}
The identity \eqref{thm;eq(1);A,A*,wtA*} follows from Lemma \ref{lem:actionAdjMatA} and Lemma \ref{lem:actionA}(i). 
The identities \eqref{thm;eq(2);A,A*,wtA*} and  \eqref{thm;eq(3);A,A*,wtA*} follow from Lemma \ref{lem:actionMatA*,wtA*} and Lemma \ref{lem:actionA}(ii). 
\end{proof}

\begin{remark}
(i) By Theorem \ref{thm:1stResult} and since $\mathbf{W}$ is irreducible as a $\mathbf{T}$-module, it follows that an $\mathcal{H}_\mathrm{V}$-module $\mathbf{W}$ is \emph{irreducible}.

\smallskip \noindent
(ii) On the $\mathcal{H}_\mathrm{V}$-module $\mathbf{W}$, the elements $\mathcal{T}$ and $\mathcal{T}'$ are both diagonalizable.
Moreover, the element $(\mathcal{T}+k^{-1})/(k+k^{-1})$ (resp. $(\mathcal{T}'+k'^{-1})/(k'+k'^{-1})$) acts as the projection from $\mathbf{W}$ onto the eigenspace of $\mathcal{T}$ (resp. $\mathcal{T}'$) corresponding to $k$ (resp. $k'$).
\end{remark}

\begin{theorem}\label{thm:2ndResult}
Recall the orthogonal projection $\pi$ (resp. $\widetilde \pi$) from $\mathbf{W}$ onto $M\hat{x}$ (resp. $M\hat{C}$). On $\mathbf{W}$, we have
\begin{equation}\label{thm;eq;pi=T'}
	\pi = \frac{\mathcal{T}'+k'^{-1}}{k'+k'^{-1}}, \qquad \qquad
	\widetilde\pi = \frac{\mathcal{T}+k^{-1}}{k+k^{-1}}.
\end{equation}
\end{theorem}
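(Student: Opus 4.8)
The plan is to realize each of the two Hecke projectors on the right-hand side of \eqref{thm;eq;pi=T'} as the spectral projector of $\mathcal{T}'$ (resp. $\mathcal{T}$) onto one of its two eigenspaces, and then to identify that eigenspace with $M\hat{x}$ (resp. $M\hat{C}$). First I would record the scalar identities coming from \eqref{scalars:k,k',u}, namely $k'^{2}=-q^{-D}$ and $k^{2}=-q^{D-N-1}$, whence $k'^{-1}=-q^{D}k'$ and $k^{-1}=-q^{N-D+1}k$; note also that $k'+k'^{-1}=k'(1-q^{D})\ne0$ and $k+k^{-1}=k(1-q^{N-D+1})\ne0$ since $q\ge2$, $D\ge3$ and $N\ge2D$, so the two fractions make sense. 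I then claim: (a) $\mathcal{T}'$ acts as the scalar $k'$ on $M\hat{x}$ and as the scalar $-k'^{-1}$ on $M\hat{x}^{\perp}$; and (b) $\mathcal{T}$ acts as $k$ on $M\hat{C}$ and as $-k^{-1}$ on $M\hat{C}^{\perp}$. Granting (a), the operator $(\mathcal{T}'+k'^{-1})/(k'+k'^{-1})$ restricts to the identity on $M\hat{x}$ (there it equals $(k'+k'^{-1})/(k'+k'^{-1})$) and to $0$ on $M\hat{x}^{\perp}$ (there it equals $(-k'^{-1}+k'^{-1})/(k'+k'^{-1})$); since $\mathbf{W}=M\hat{x}\oplus M\hat{x}^{\perp}$ is an orthogonal direct sum, that operator is precisely the orthogonal projection $\pi$ onto $M\hat{x}$. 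The identity for $\widetilde{\pi}$ follows from (b) in the same way, using $\mathbf{W}=M\hat{C}\oplus M\hat{C}^{\perp}$.

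For claim (a) I would work in the ordered basis $\mathcal{C}$ of \eqref{Basis.W}, in which $\mathcal{T}'$ is the block-diagonal matrix $\pmb{T}'$ of Corollary \ref{cor:Hv-moduleW}: the block $t'(i)$ with $1\le i\le D-1$ occupies the pair of coordinates $\hat{C}^{+}_{i-1},\hat{C}^{-}_{i}$, and the two boundary blocks are $t'(0)=t'(D)=[\,k'\,]$, sitting on $\hat{C}^{-}_{0}$ and on $\hat{C}^{+}_{D-1}$ respectively. Using the $\Phi$-standard basis $\{A_{i}\hat{x}\}_{i=0}^{D}$ of $M\hat{x}$ from Proposition \ref{prop:LSonMx,Mxp}(I)(i) together with $A_{i}\hat{x}=\hat{C}^{+}_{i-1}+\hat{C}^{-}_{i}$, the assertion $\mathcal{T}'|_{M\hat{x}}=k'$ reduces to the elementary fact that the column vector with both entries equal to $1$ is a $k'$-eigenvector of $t'(i)$ for $1\le i\le D-1$ --- the two rows of the bracketed $2\times2$ matrix in \eqref{matrices:t'(i),u(i)} each sum to $1$ --- plus the trivial boundary cases. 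Using the $\Phi^{\perp}$-standard basis $\{u^{\perp}_{i}\}_{i=0}^{D-2}$ of $M\hat{x}^{\perp}$ from Proposition \ref{prop:LSonMx,Mxp}(II)(i), and noting that $u^{\perp}_{i}=(q^{D-i-1}-1)\hat{C}^{+}_{i}+(q^{-i-1}-1)\hat{C}^{-}_{i+1}$ lies in the block $t'(i+1)$, the assertion $\mathcal{T}'|_{M\hat{x}^{\perp}}=-k'^{-1}$ reduces to checking, via \eqref{matrices:t'(i),u(i)} and $k'^{-1}=-q^{D}k'$, that the column vector with entries $q^{D-i-1}-1$ and $q^{-i-1}-1$ is a $(-k'^{-1})$-eigenvector of $t'(i+1)$; this is a short computation with powers of $q$.

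Claim (b) is proved identically with $\pmb{T}$ in place of $\pmb{T}'$, where now every block $t(i)$ is $2\times2$ and occupies the coordinates $\hat{C}^{-}_{i},\hat{C}^{+}_{i}$: for $M\hat{C}$ one uses the $\widetilde{\Phi}$-standard basis $\{\hat{C}_{i}=\hat{C}^{-}_{i}+\hat{C}^{+}_{i}\}_{i=0}^{D-1}$ of Proposition \ref{prop:LSonMC,MCp}(I)(i) and the fact that the all-ones column is a $k$-eigenvector of $t(i)$ (again the rows of the bracketed matrix in \eqref{matrices:t(i),u'(i)} sum to $1$), while for $M\hat{C}^{\perp}$ one uses the $\widetilde{\Phi}^{\perp}$-standard basis $\{\widetilde{u}^{\perp}_{i}=(q^{N-D-i}-1)\hat{C}^{-}_{i}+(q^{-i-1}-1)\hat{C}^{+}_{i}\}_{i=0}^{D-1}$ of Proposition \ref{prop:LSonMC,MCp}(II)(i) and checks, via $k^{-1}=-q^{N-D+1}k$, that the column with entries $q^{N-D-i}-1$ and $q^{-i-1}-1$ is a $(-k^{-1})$-eigenvector of $t(i)$. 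I do not expect a conceptual obstacle here; the only thing demanding care is the bookkeeping that aligns the staggered block pattern of $\pmb{T}'$ (in particular its two scalar boundary blocks $t'(0),t'(D)$) with the $\Phi$- and $\Phi^{\perp}$-standard bases, together with the routine $q$-power algebra in the four eigenvector identities. As an alternative one could sidestep the Leonard-system bases altogether and verify \eqref{thm;eq;pi=T'} directly on $\mathcal{C}$ by comparing the explicit matrices $\pmb{T}'$, $\pmb{T}$ from \eqref{matrices:t(i),u'(i)}--\eqref{matrices:t'(0,D),u(0,D)} against the actions \eqref{eq:pi.Cpm(i)} and \eqref{eq:wtpi.Cpm} of $\pi$ and $\widetilde{\pi}$; that route is slightly longer but entirely mechanical.
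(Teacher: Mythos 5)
Your proposal is correct; I checked the four eigenvector identities it relies on, and they all hold (e.g.\ $t'(i+1)$ applied to the column $(q^{D-i-1}-1,\,q^{-i-1}-1)^{t}$ gives $q^{D}k'$ times that column, and $-k'^{-1}=q^{D}k'$ indeed follows from $k'^{2}=-q^{-D}$; similarly for $t(i)$ and $(q^{N-D-i}-1,\,q^{-i-1}-1)^{t}$ with $-k^{-1}=q^{N-D+1}k$). Your primary route differs from the paper's in organization rather than substance: the paper's proof verifies the two identities directly on the basis $\mathcal{C}$ by comparing the blocks of $\pmb{T}'$ and $\pmb{T}$ with the explicit actions \eqref{eq:pi.Cpm(i)} and \eqref{eq:wtpi.Cpm} of $\pi$ and $\widetilde{\pi}$ --- exactly the ``alternative'' you mention at the end --- whereas you instead identify $M\hat{x}$ and $M\hat{x}^{\perp}$ (resp.\ $M\hat{C}$ and $M\hat{C}^{\perp}$) as the $k'$- and $(-k'^{-1})$-eigenspaces of $\mathcal{T}'$ (resp.\ the $k$- and $(-k^{-1})$-eigenspaces of $\mathcal{T}$) via the standard bases $\{A_{i}\hat{x}\}$, $\{u^{\perp}_{i}\}$, $\{\hat{C}_{i}\}$, $\{\widetilde{u}^{\perp}_{i}\}$, and then read off the Hecke idempotent as the spectral projector, using the direct sums \eqref{ODS:Mx,Mxp}, \eqref{ODS:MC,MCp}. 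The underlying data are the same (the expressions of the standard basis vectors in the $\hat{C}^{\pm}_{i}$ coordinates are precisely what produced \eqref{eq:pi.Cpm(i)} and \eqref{eq:wtpi.Cpm} in Section 6), so neither route is shorter in an essential way; what your version buys is a cleaner conceptual statement --- it proves at the same time that $\mathcal{T}$, $\mathcal{T}'$ are diagonalizable on $\mathbf{W}$ with the stated eigenspaces, which is the content of Remark 7.10(ii) --- while the paper's version recycles the already-derived projection formulas and so needs no new eigenvector computations. Your bookkeeping of the staggered block structure of $\pmb{T}'$ (blocks on $\hat{C}^{+}_{i-1},\hat{C}^{-}_{i}$ with scalar blocks at $\hat{C}^{-}_{0}$ and $\hat{C}^{+}_{D-1}$) versus the aligned blocks of $\pmb{T}$ (on $\hat{C}^{-}_{i},\hat{C}^{+}_{i}$) is also accurate, and the nonvanishing of $k'+k'^{-1}=k'(1-q^{D})$ and $k+k^{-1}=k(1-q^{N-D+1})$ is correctly justified.
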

\begin{proof}
Use \eqref{eq:pi.Cpm(i)} and the matrix $\pmb T'$ to obtain the first identity in \eqref{thm;eq;pi=T'}.
Use \eqref{eq:wtpi.Cpm} and the matrix $\pmb T$ to obtain the second identity. The result follows.
\end{proof}

We should like to make a comment on a nil-DAHA of type $(C^\vee_1, C_1)$.
We first recall the definition of the (ordinary) DAHA of type $(C^\vee_1, C_1)$.
The DAHA $\mathcal{H}=\mathcal{H}(\kappa_0, \kappa_1,\kappa_0',\kappa_1';q)$ is the associative $\mathbb{C}$-algebra with generators $\mathbf{T}^{\pm1}_0$, $\mathbf{T}^{\pm1}_1$, and $\mathbf{X}^{\pm1}$ and relations (cf. \cite[Section 6.4]{2003Macdonald}, \cite[Section 3]{1999SahiAnnMath})
\begin{equation}\label{DAHArels(1)}
	(\mathbf{T}_i-\kappa_i)(\mathbf{T}_i+\kappa_i^{-1})=0, \qquad
	(\mathbf{T}'_i-\kappa'_i)(\mathbf{T}'_i+\kappa'^{-1}_i)=0, \qquad 
	i = 0,1,
\end{equation}
where,
\begin{equation*}\label{DAHArels(2)}
	\mathbf{T}'_0 := q^{-1/2}\mathbf{X}\mathbf{T}^{-1}_0, \qquad
	\mathbf{T}'_1 := \mathbf{X}^{-1}\mathbf{T}^{-1}_1.
\end{equation*}

In \cite[Remark 8.2]{2018LeeTanakaSIGMA}, we specialized some defining relations of $\mathcal{H}$ using the so-called ``double-dot normalization'' method; cf. \cite[Section 2.5]{2015CheOrrMathZ}.
We then obtained a certain nil-DAHA, which is isomorphic to the algebra $\mathcal{H}_{\mathrm{III}}$.
In the present paper, by employing the techniques used in \cite{2018LeeTanakaSIGMA}, we shall specialize the algebra $\mathcal{H}$ to obtain a new nil-DAHA, denoted by $\overline{\mathcal{H}}$, which is well-suited in the context of Grassmann graphs. 
Set 
\begin{equation}\label{ddot:T_1,T'_1}
	\ddot{\mathbf{T}}_1 := \kappa_1\mathbf{T}_1, \qquad
	\ddot{\mathbf{T}}'_1 :=\kappa_1\mathbf{T}'_1.
\end{equation}
Apply \eqref{ddot:T_1,T'_1} to the relations \eqref{DAHArels(1)} for $i=1$ to get
\begin{equation}\label{ddot-normal;eq(2)}
	(\ddot{\mathbf{T}}_1-\kappa_1^2)(\ddot{\mathbf{T}}_1+1)=0, \qquad
	(\ddot{\mathbf{T}}'_1-\kappa_1\kappa'_1)(\ddot{\mathbf{T}}'_1+\kappa_1\kappa'^{-1}_1)=0.
\end{equation}
Observe that $\mathbf{T}'_1 = \mathbf{X}^{-1}(\mathbf{T}_1-\kappa_1+\kappa_1^{-1})$ and $\mathbf{T}_1 = (\mathbf{T}'_1-\kappa'_1+\kappa'^{-1}_1)\mathbf{X}^{-1}$.  
Use these and \eqref{ddot:T_1,T'_1} to get
\begin{equation}\label{ddot-normal;eq(1)}
	\ddot{\mathbf{T}}'_1 = \mathbf{X}^{-1}(\ddot{\mathbf{T}}_1-\kappa^2_1+1), \qquad
	\ddot{\mathbf{T}}_1 = (\ddot{\mathbf{T}}'_1-\kappa_1\kappa'_1+\kappa_1\kappa'^{-1}_1)\mathbf{X}^{-1}.
\end{equation}
Thus, $\mathcal{H}$ has a presentation with new generators $\mathbf{T}^{\pm1}_0$, $\ddot{\mathbf{T}}_1$, and $\mathbf{X}^{\pm1}$ and relations \eqref{DAHArels(1)} at $i=1$ and \eqref{ddot-normal;eq(2)} and \eqref{ddot-normal;eq(1)}.
We now specialize the parameters $\kappa_1, \kappa'_1$. 
Let $u \in \mathbb{C}$ be a nonzero scalar.
Set $\kappa'_1=u^{-1}\kappa_1$ in \eqref{ddot-normal;eq(2)} and \eqref{ddot-normal;eq(1)}.
Then, letting $\kappa_1 \to 0$, the relations \eqref{ddot-normal;eq(2)} and \eqref{ddot-normal;eq(1)} become
$$
	\ddot{\mathbf{T}}_1(\ddot{\mathbf{T}}_1+1)=0, \qquad
	\ddot{\mathbf{T}}'_1(\ddot{\mathbf{T}}'_1+u)=0, \qquad
	\ddot{\mathbf{T}}'_1=\mathbf{X}^{-1}(\ddot{\mathbf{T}}_1+1), \qquad
	\ddot{\mathbf{T}}_1=(\ddot{\mathbf{T}}'_1+u)\mathbf{X}^{-1}.
$$
Define 
$$
	\mathcal{T}:=\mathbf{T}_0, \qquad
	\mathcal{U}:=u^{-1}\ddot{\mathbf{T}}_1, \qquad
	\mathcal{X}:=q^{-1/2}\mathbf{X}, \qquad
	k:=\kappa_0, \qquad
	k':=\kappa_0'
$$
Then the algebra $\overline{\mathcal{H}}=\overline{\mathcal{H}}(k,k',u;q)$ obtained from this specialization has a presentation with generators $\mathcal{T}^{\pm1}$, $\mathcal{U}$, $\mathcal{X}^{\pm1}$ and relations
\begin{align*}
	(\mathcal{T}-k)(\mathcal{T}+k^{-1})&=0, \\
	 (\mathcal{T}'-k')(\mathcal{T}'+k'^{-1})&=0,\\
	 \mathcal{U}(\mathcal{U}+u^{-1})&=0, \\
	 \mathcal{U}'(\mathcal{U}'+1)&=0, \\
 	q^{1/2}\mathcal{U}\mathcal{X} &= \mathcal{U}'+1.
\end{align*}
where 
$$
	\mathcal{T}'=\mathcal{X}\mathcal{T}^{-1}, \qquad \qquad 
	\mathcal{U}'=q^{-1/2}\mathcal{X}^{-1}(\mathcal{U}+u^{-1}).
$$
We call $\overline{\mathcal{H}}$ a \emph{nil-DAHA} of type $(C^\vee_1, C_1)$.
We shall remark that the nil-DAHA $\overline{\mathcal{H}}$ is isomorphic to the algebra $\mathcal{H}_\mathrm{V}(k,k',u;q)$ from Definition \ref{Def:CheAlgHv}.

\section{Non-symmetric dual $q$-Hahn polynomials}\label{Section:nonsym dual q-Hahn poly}

In this section, we shall define non-symmetric dual $q$-Hahn polynomials and give them a combinatorial interpretation.
Recall the Leonard systems $\Phi$, $\Phi^\perp$, $\widetilde \Phi$, and $\widetilde \Phi^\perp$ from Propositions \ref{prop:LSonMx,Mxp} and \ref{prop:LSonMC,MCp}.
Recall the sequences of monic dual $q$-Hahn polynomials
\begin{equation*}
	\{h_i\}^D_{i=0}, \qquad \{h^\perp_i\}^{D-2}_{i=0}, \qquad \{\widetilde h_i\}^{D-1}_{i=0}, \qquad \{\widetilde h^\perp_i\}^{D-1}_{i=0}
\end{equation*}
associated with $\Phi$, $\Phi^\perp$, $\widetilde \Phi$, $\widetilde \Phi^\perp$, respectively, from \eqref{prop:eq:h(i)}, \eqref{prop:eq:h-perp(i)}, \eqref{prop:eq:wt-h(i)}, \eqref{prop:eq:wt-h-perp(i)}.
Define the following monic Laurent polynomials in $\mathbb{C}[\zeta, \zeta^{-1}]$ by
\begin{equation}\label{eq:p-perp,p-tilde,p-tildeperp}
	p^\perp = \zeta^{-1}(\zeta-\tau)(\zeta-\tau^{-1}q^{-D}), \qquad
	\widetilde p = \zeta^{-1}(\zeta-\tau^{-1}q^{-D}), \qquad
	\widetilde p^\perp = \zeta^{-1}(\zeta-\tau).
\end{equation}

\begin{lemma}\label{lem:p-perp,p-tilde,p-tildeperp}
On the $\mathcal{H}_\mathrm{V}$-module $\mathbf{W}$, we have
\begin{equation}\label{lem:eq:three-poly-action}
	p^\perp(\mathcal{X}).\hat{x} = \tau q(1-q) u_0^\perp, \qquad
	\widetilde p(\mathcal{X}).\hat{x} = (1-q) \hat{C}, \qquad
	\widetilde p^\perp(\mathcal{X}).\hat{x} = q^{D-N}\widetilde u^\perp_0,
\end{equation}
where $u^\perp_0$ is from \eqref{prop:eq:u-perp(i)} and $\widetilde u^\perp_0$ is from \eqref{prop:eq:wt-u-perp(i)}.
\end{lemma}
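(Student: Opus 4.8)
The plan is to verify each of the three identities in \eqref{lem:eq:three-poly-action} directly from the action of $\mathcal{X}$ on the basis $\mathcal{C}$. The key observation is that $\hat{x}=\hat{C}^-_0$, so everything reduces to computing $\mathcal{X}.\hat{C}^-_0$ and $\mathcal{X}^{-1}.\hat{C}^-_0$ (and, for $p^\perp$, one more application), which are given explicitly by Lemma \ref{lem:actionX}(i). Specializing $i=0$ there: the only nonzero terms in $\mathcal{X}.\hat{C}^-_0$ are on $\hat{C}^-_0$, $\hat{C}^+_0$, $\hat{C}^-_1$ (the $\hat{C}^+_{i-1}$ term vanishes since $\hat{C}^+_{-1}=0$), and similarly $\mathcal{X}^{-1}.\hat{C}^-_0$ lands on $\hat{C}^-_0$ and $\hat{C}^+_0$ only. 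So $\widetilde p(\mathcal{X}).\hat{x}=(\mathcal{X}-\tau^{-1}q^{-D})\hat{x}$ and $\widetilde p^\perp(\mathcal{X}).\hat{x}=(\mathcal{X}-\tau)\hat{x}$ are immediate once one plugs in $\tau=-q^{(-N-1)/2}$ and collects coefficients of $\hat{C}^-_0$, $\hat{C}^+_0$, $\hat{C}^-_1$.

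First I would handle $\widetilde p$: compute $(\mathcal{X}-\tau^{-1}q^{-D})\hat{C}^-_0$ using Lemma \ref{lem:actionX}(i) at $i=0$. The coefficient of $\hat{C}^-_1$ is $\tau(q-1)^2$; the coefficient of $\hat{C}^-_0$ is $\tau q^{N-D+1}-\tau^{-1}q^{-D}$, which should simplify (using $\tau^2=q^{-N-1}$) to $(1-q)\cdot(\text{something})$; the coefficient of $\hat{C}^+_0$ is $\tau(1-q)(q^D-q+1)$. The claim $\widetilde p(\mathcal{X}).\hat{x}=(1-q)\hat{C}=(1-q)(\hat{C}^-_0+\hat{C}^+_0)$ forces the $\hat{C}^-_1$-coefficient to vanish — but $\tau(q-1)^2\ne 0$. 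This means $\widetilde p(\mathcal{X})$ as written cannot be $(\mathcal{X}-\tau^{-1}q^{-D})$ alone; rather, since $\widetilde p=\zeta^{-1}(\zeta-\tau^{-1}q^{-D})=1-\tau^{-1}q^{-D}\zeta^{-1}$ is a \emph{Laurent} polynomial, $\widetilde p(\mathcal{X})=\mathcal{I}-\tau^{-1}q^{-D}\mathcal{X}^{-1}$. So I must use $\mathcal{X}^{-1}.\hat{C}^-_0$ from the right-hand table: its nonzero terms at $i=0$ are on $\hat{C}^-_0$ with coefficient $\tau q(q^D-1+1)=\tau q^{D+1}$ and on $\hat{C}^+_0$ with coefficient $\tau(q-1)(q^D-1+1)=\tau q^D(q-1)$. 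Then $(\mathcal{I}-\tau^{-1}q^{-D}\mathcal{X}^{-1})\hat{C}^-_0=\hat{C}^-_0-\tau^{-1}q^{-D}(\tau q^{D+1}\hat{C}^-_0+\tau q^D(q-1)\hat{C}^+_0)=\hat{C}^-_0-q\hat{C}^-_0-(q-1)\hat{C}^+_0=(1-q)(\hat{C}^-_0+\hat{C}^+_0)=(1-q)\hat{C}$, as desired. The same pattern works for $\widetilde p^\perp=1-\tau\zeta^{-1}$: $(\mathcal{I}-\tau\mathcal{X}^{-1})\hat{C}^-_0=(1-\tau^2 q^{D+1})\hat{C}^-_0-\tau^2 q^D(q-1)\hat{C}^+_0$, which using $\tau^2=q^{-N-1}$ becomes $(1-q^{D-N})\hat{C}^-_0-q^{D-N-1}(q-1)\hat{C}^+_0=q^{D-N}\big((1-q^{D-N})q^{N-D}\hat{C}^-_0 + \ldots\big)$ — more cleanly, factor $q^{D-N}$ to match $q^{D-N}\widetilde u^\perp_0=q^{D-N}\big((q^{N-D}-1)\hat{C}^-_0+(q^{-1}-1)\hat{C}^+_0\big)$; checking the two coefficients $(1-q^{D-N})=q^{D-N}(q^{N-D}-1)$ and $-q^{D-N-1}(q-1)=q^{D-N}(q^{-1}-1)$ confirms it.

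Finally, for $p^\perp=1-(\tau+\tau^{-1}q^{-D})\zeta^{-1}+q^{-D}\zeta^{-2}$, so $p^\perp(\mathcal{X})=\mathcal{I}-(\tau+\tau^{-1}q^{-D})\mathcal{X}^{-1}+q^{-D}\mathcal{X}^{-2}$. I would first compute $\mathcal{X}^{-1}.\hat{C}^-_0$ (as above, giving a combination of $\hat{C}^-_0,\hat{C}^+_0$), then apply $\mathcal{X}^{-1}$ once more using both parts (i) and (ii) of Lemma \ref{lem:actionX} to get $\mathcal{X}^{-2}.\hat{C}^-_0$ as a combination of $\hat{C}^-_0,\hat{C}^+_0,\hat{C}^-_1,\hat{C}^+_1,\hat{C}^-_2$. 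Assembling the linear combination, the coefficients of $\hat{C}^-_0$, $\hat{C}^-_1$, $\hat{C}^+_1$, $\hat{C}^-_2$ must all vanish (by the Cayley–Hamilton-type structure, since $p^\perp$ is the minimal Laurent polynomial of $\mathcal{X}+\mathcal{X}^{-1}$ cutting $M\hat{x}$ down to the one-dimensional $\mathcal{T}'$-eigenline, reflecting $M\hat{x}^\perp\cap M\hat{x}$-type vanishing), leaving only a $\hat{C}^+_0$-contribution; since $\hat{C}^+_0$ appears with the same coefficient in $u^\perp_0=(q^{D-1}-1)\hat{C}^+_0+(q^{-1}-1)\hat{C}^-_1$ up to the $\hat{C}^-_1$ term which must also survive — so actually the surviving combination should be a multiple of $u^\perp_0$, not of $\hat{C}^+_0$ alone. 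I would match the claimed $\tau q(1-q)u^\perp_0$ by checking the $\hat{C}^+_0$- and $\hat{C}^-_1$-coefficients. \textbf{The main obstacle} is purely bookkeeping: $\mathcal{X}^{-2}$ requires two rounds of the four-term recursions in Lemma \ref{lem:actionX}, so a handful of terms must be expanded and the prefactors $\tau$, $\tau+\tau^{-1}q^{-D}$, $q^{-D}$ carefully tracked, repeatedly using $\tau^2=q^{-N-1}$ to collapse mixed powers of $q$; there is no conceptual difficulty, only the risk of a sign or exponent slip, so I would organize the computation term-by-term in $\hat{C}^\pm_j$ and verify each coefficient independently.
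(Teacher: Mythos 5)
Your overall method is the same as the paper's: evaluate the Laurent polynomials on $\hat{x}=\hat{C}^-_0$ using the $i=0$ rows of Lemma \ref{lem:actionX}(i), and your verifications of the second and third identities are correct. However, your treatment of the first identity contains a concrete error: you expand $p^\perp=\zeta^{-1}(\zeta-\tau)(\zeta-\tau^{-1}q^{-D})$ as $1-(\tau+\tau^{-1}q^{-D})\zeta^{-1}+q^{-D}\zeta^{-2}$, which is $\zeta^{-1}p^\perp(\zeta)$, not $p^\perp(\zeta)$. The correct expansion is
\begin{equation*}
	p^\perp(\zeta)=\zeta-(\tau+\tau^{-1}q^{-D})+q^{-D}\zeta^{-1},
	\qquad\text{so}\qquad
	p^\perp(\mathcal{X})=\mathcal{X}-(\tau+\tau^{-1}q^{-D})\,\mathcal{I}+q^{-D}\mathcal{X}^{-1},
\end{equation*}
and no $\mathcal{X}^{-2}$ is needed at all; only the two actions you already wrote down, $\mathcal{X}.\hat{x}$ and $\mathcal{X}^{-1}.\hat{x}$, enter. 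With your operator you would be computing $\mathcal{X}^{-1}p^\perp(\mathcal{X}).\hat{x}=\mathcal{X}^{-1}\bigl(\tau q(1-q)u^\perp_0\bigr)$, which is not $\tau q(1-q)u^\perp_0$ (the eigenvalues of $\mathcal{X}$ on $\mathbf{W}$ are $\tau q^i$, $\tau^{-1}q^{-i}$, never $1$), so the step as proposed would fail, and your closing paragraph in fact never completes the check but only speculates about which coefficients survive.

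To repair it, run the same direct computation you did for $\widetilde p$ and $\widetilde p^\perp$: using $\mathcal{X}.\hat{x}=\tau(q^{N-D+1}-q+1)\hat{C}^-_0+\tau(1-q)(q^D-q+1)\hat{C}^+_0+\tau(q-1)^2\hat{C}^-_1$ and $\mathcal{X}^{-1}.\hat{x}=\tau q^{D+1}\hat{C}^-_0+\tau q^D(q-1)\hat{C}^+_0$, the coefficient of $\hat{C}^-_0$ in $p^\perp(\mathcal{X}).\hat{x}$ is $\tau q^{N-D+1}-\tau^{-1}q^{-D}=0$ (using $\tau^2=q^{-N-1}$), the coefficient of $\hat{C}^+_0$ is $\tau(1-q)(q^D-q+1)+\tau(q-1)=\tau q(1-q)(q^{D-1}-1)$, and the coefficient of $\hat{C}^-_1$ is $\tau(q-1)^2=\tau q(1-q)(q^{-1}-1)$; these match $\tau q(1-q)u^\perp_0$ with $u^\perp_0=(q^{D-1}-1)\hat{C}^+_0+(q^{-1}-1)\hat{C}^-_1$ exactly. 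Your appeal to a ``Cayley--Hamilton-type'' vanishing is unnecessary and, as stated (only a $\hat{C}^+_0$-contribution surviving), incorrect: both $\hat{C}^+_0$ and $\hat{C}^-_1$ survive, in the precise proportion dictated by $u^\perp_0$.
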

\begin{proof}
Note that $\hat{x}=\hat{C}^-_0$.
Setting $i=0$ in Lemma \ref{lem:actionX}(i), the actions of $\mathcal{X}$ and $\mathcal{X}^{-1}$ on $\hat{C}^-_0$ are given by
\begin{align*}
	& \mathcal{X}.\hat{x}  = \tau (q^{N-D+1}-q+1)\hat C^-_{0} + \tau (1-q)(1-q+q^D)\hat C^+_{0} + \tau (1-q)^2\hat C^-_{1}, \\
	& \mathcal{X}^{-1}.\hat{x}  = \tau q^{D+1}\hat C^-_{0} + \tau q^D(q-1)\hat C^+_{0}.
\end{align*}
Evaluate $p^\perp(\mathcal{X}).\hat{x}$ using these equations and simplify the result using \eqref{prop:eq:u-perp(i)} at $i=0$ to get the first equation in \eqref{lem:eq:three-poly-action}.
The remaining two equations in \eqref{lem:eq:three-poly-action} are similarly obtained.
\end{proof}

We define the non-symmetric Laurent polynomials $\ell^\pm_i$ in $\mathbb{C}[\zeta, \zeta^{-1}]$ as follows. For $0 \leq i \leq D-1$,
\begin{align}
	& \ell^-_i (\zeta) := \frac{q^D-q^i}{\tau^{i}(q^D-1)(q;q)^2_i}\left( h_i - \frac{1-q^i}{q^D-q^i}p^\perp h^\perp_{i-1} \right), \label{eq(1):NonSymDqHahn} \\
	& \ell^+_i (\zeta) := \frac{q^{i+1}-1}{\tau^{i+1}(q^D-1)(q;q)^2_{i+1}}\left( h_{i+1} -p^\perp h^\perp_i \right),\label{eq(2):NonSymDqHahn}
\end{align}
where 
\begin{equation}\label{hperp(D-1)}
	h^\perp_{D-1}(\zeta) := \prod^{D-1}_{j=1} (\zeta+\zeta^{-1}-\tau q^j - \tau^{-1} q^{-j}) = \zeta^{1-D}\prod^{D-1}_{j=1} (\zeta - \tau q^j)(\zeta - \tau^{-1}q^{-j}).
\end{equation}

\begin{lemma}\label{lem:h-perp(D-1)vanish}
Recall the subspace $M\hat{x}^\perp$ of $\mathbf{W}$ from \eqref{ODS:Mx,Mxp}.
Then $h^\perp_{D-1}(\mathcal{X})$ vanishes on $M\hat{x}^\perp$.
\end{lemma}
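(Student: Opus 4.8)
The plan is to turn $h^\perp_{D-1}(\mathcal{X})$ into a polynomial in the adjacency matrix $A$ and then invoke the structure of the Leonard system $\Phi^\perp$ from Proposition \ref{prop:LSonMx,Mxp}(II). First I would note that, by \eqref{hperp(D-1)}, $h^\perp_{D-1}(\zeta)$ is a symmetric Laurent polynomial, i.e.\ a degree-$(D-1)$ polynomial in $\lambda=\zeta+\zeta^{-1}$; hence on $\mathbf{W}$
\begin{equation*}
	h^\perp_{D-1}(\mathcal{X}) = \prod_{j=1}^{D-1}\bigl(\mathcal{A} - \tau q^{j} - \tau^{-1}q^{-j}\bigr), \qquad \mathcal{A}=\mathcal{X}+\mathcal{X}^{-1}.
\end{equation*}
By Theorem \ref{thm:1stResult} we have $A = \tau b\,\mathcal{A} + a$ on $\mathbf{W}$ with $\tau b\neq 0$, so $\mathcal{A} = (\tau b)^{-1}(A-a)$ and therefore, on $\mathbf{W}$,
\begin{equation*}
	h^\perp_{D-1}(\mathcal{X}) = (\tau b)^{-(D-1)}\prod_{j=1}^{D-1}\bigl(A - a - \tau b(\tau q^{j}+\tau^{-1}q^{-j})\bigr).
\end{equation*}

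Next I would identify the $j$-th factor. Using $\tau^{2}=b^{-1}c$ (the line following \eqref{scalar;tau}), one computes $a + \tau b(\tau q^{j}+\tau^{-1}q^{-j}) = a + b\tau^{2}q^{j} + b q^{-j} = a + cq^{j} + bq^{-j}$, which by Definition \ref{Def:LS.dualqHahn} together with the parameter sequence of $\Phi$ in Proposition \ref{prop:LSonMx,Mxp}(I) is exactly the graph eigenvalue $\theta_{j}$. Hence, on $\mathbf{W}$ (and in particular on $M\hat{x}^\perp$),
\begin{equation*}
	h^\perp_{D-1}(\mathcal{X}) = (\tau b)^{-(D-1)}\prod_{j=1}^{D-1}(A - \theta_{j}I).
\end{equation*}

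Finally, by Proposition \ref{prop:LSonMx,Mxp}(II) the sequence $\Phi^\perp=(A,A^{*},\{E_i\}^{D-1}_{i=1},\{E^{*}_i\}^{D-1}_{i=1})|_{M\hat{x}^\perp}$ is a Leonard system; in particular $\{E_i|_{M\hat{x}^\perp}\}^{D-1}_{i=1}$ are the primitive idempotents of $A|_{M\hat{x}^\perp}$, so $\sum_{i=1}^{D-1}E_i=I$ on $M\hat{x}^\perp$ and $(A-\theta_{j}I)E_i=(\theta_i-\theta_j)E_i$. Taking the product over $j=1,\dots,D-1$ annihilates every $E_i|_{M\hat{x}^\perp}$, so $\prod_{j=1}^{D-1}(A-\theta_{j}I)$, and hence $h^\perp_{D-1}(\mathcal{X})$, vanishes on $M\hat{x}^\perp$.

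The argument is short; the only point needing care is the bookkeeping of eigenvalue labels — one must be sure that the $D-1$ roots $\tau q^{j}+\tau^{-1}q^{-j}$ of $h^\perp_{D-1}$ match, under $A=\tau b\mathcal{A}+a$, precisely the eigenvalues $\theta_1,\dots,\theta_{D-1}$ occurring in $\Phi^\perp$ (rather than $\theta_0,\dots,\theta_{D-2}$). This is exactly the content of the parameter shift $(a,a^{*},b,b^{*},c,r)\mapsto(a,a^{*},bq^{-1},b^{*}q^{-1},cq,rq)$ recorded in Proposition \ref{prop:LSonMx,Mxp}(II), so no genuine obstacle remains.
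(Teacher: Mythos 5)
Your proposal is correct and follows essentially the same route as the paper's proof: both rewrite $h^\perp_{D-1}(\mathcal{X})=\prod_{j=1}^{D-1}(\mathcal{X}+\mathcal{X}^{-1}-\tau q^j-\tau^{-1}q^{-j})$ as $(\tau b)^{1-D}\prod_{j=1}^{D-1}(A-\theta_j)$ via \eqref{thm;eq(1);A,A*,wtA*} and $\tau^2=b^{-1}c$, and then use that $M\hat{x}^\perp$ is the sum of the $E_j$-eigenspaces for $j=1,\dots,D-1$. Your explicit check that $a+cq^j+bq^{-j}=\theta_j$ and the remark about the eigenvalue labels just spell out details the paper leaves implicit.
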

\begin{proof}
On the $\mathcal{H}_\mathrm{V}$-module $\mathbf{W}$, using \eqref{thm;eq(1);A,A*,wtA*} we find
\begin{align*}
	h^\perp_{D-1}(\mathcal{X}) = \prod^{D-1}_{j=1} (\mathcal{X}+\mathcal{X}^{-1}-\tau q^j - \tau^{-1} q^{-j}) = (\tau b)^{1-D}\prod^{D-1}_{j=1} (A-\theta_j),
\end{align*}
where we recall $\tau^2=b^{-1}c$ and $\theta_j = a + bq^{-j}+cq^{j}$.
Since $M\hat{x}^\perp=\sum^{D-1}_{j=1}E_j(M\hat{x}^\perp)$, the result follows.
\end{proof}

We define another non-symmetric Laurent polynomials $\widetilde \ell^\pm_i$ as follows. For $0 \leq i \leq D-1$,
\begin{align*}
	& \widetilde \ell^-_i (\zeta) := \frac{1}{\tau^i(1-q^{N-D+1})(q;q)^2_i}\left( \widetilde{p} \widetilde{h}_i  - q^{N-D+1} \widetilde{p}^\perp \widetilde{h}^\perp_i \right),\\
	& \widetilde \ell^+_i (\zeta) := \frac{q^{N-D+1}}{\tau^i(q^{N-D+1}-1)(q;q)^2_i}\left( \frac{1-q^{D-N+i}}{1-q^{i+1}}\widetilde{p} \widetilde{h}_i  - \widetilde{p}^\perp \widetilde{h}^\perp_i \right).
\end{align*}
We shall now give a description of the role of the Laurent polynomials $\ell^{\pm}_i$, $\widetilde \ell^\pm_i$ on $\mathbf{W}$.
Recall the basis $\mathcal{C}$ for $\mathbf{W}$ from \eqref{Basis.W}.
\begin{proposition}\label{prop:C_i=ell_i(X)x}
On the $\mathcal{H}_\mathrm{V}$-module $\mathbf{W}$, 
\begin{equation}\label{lem:eq:C_i=ell_i(X)x}
	\hat{C}^-_i = \ell^-_i(\mathcal{X}).\hat{x} = \widetilde\ell^-_i(\mathcal{X}).\hat{x}, \qquad \hat{C}^+_i = \ell^+_i(\mathcal{X}).\hat{x} = \widetilde \ell^+_i(\mathcal{X}).\hat{x},
\end{equation}
for $0 \leq i \leq D-1$.
\end{proposition}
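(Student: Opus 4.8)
The plan is to verify each of the four identities in \eqref{lem:eq:C_i=ell_i(X)x} by expressing the relevant Laurent polynomial applied to $\hat x$ in terms of known $\mathcal H_{\mathrm V}$-module data, and then matching against the basis vectors $\hat C^\pm_i$. The key observation is that $\mathcal X = \mathcal T'\mathcal T$ acts on $\mathbf W$ as the matrix $\pmb T'\pmb T$, which by Theorem \ref{thm:1stResult} satisfies $\tau b\,\mathcal A + a = A$ on $\mathbf W$, so polynomials in $\mathcal X + \mathcal X^{-1}$ are, up to a linear change of variable, polynomials in the adjacency action $A$ restricted to $\mathbf W$. Since $\hat x = \hat C^-_0$ lies in $M\hat x$ (indeed $A_0\hat x = \hat x$), and since $\{A_i\hat x\}_{i=0}^D$ is the $\Phi$-standard basis for $M\hat x$ while $\{\hat C_i\}_{i=0}^{D-1}$ is the $\widetilde\Phi$-standard basis for $M\hat C$, the symmetric parts of the $\ell$'s and $\widetilde\ell$'s will pick out exactly these standard-basis vectors via \eqref{vi(A)E*i}, \eqref{hi(X)=vi(A).E*iu}, and the analogous statements for $\widetilde\Phi$.

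First I would handle $\hat C^-_i = \ell^-_i(\mathcal X).\hat x$ and $\hat C^+_i = \ell^+_i(\mathcal X).\hat x$ together. Write $h_i(\mathcal X).\hat x$ using Proposition \ref{prop:LSonMx,Mxp}(I)(iii): since $h_i(\zeta) = \tau^i(q;q)_i^2\,v_i(a+b\tau\zeta^{-1}+c\tau^{-1}\zeta)$ and the action of $A$ on $\mathbf W$ equals that of $a+b\tau\mathcal X^{-1}+c\tau^{-1}\mathcal X$, we get $h_i(\mathcal X).\hat x = \tau^i(q;q)_i^2\,v_i(A).\hat x = \tau^i(q;q)_i^2\,A_i\hat x$ by \eqref{vi(A)E*i} applied to the primary $T$-module (using that $v_i$ is the polynomial of the Leonard system $\Phi$ and $\hat x = E^*_0\hat x$ up to scalar — more precisely $v_i(A)\hat x = A_i\hat x$ is already recorded in Section \ref{Section:DRGs}). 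Similarly $p^\perp(\mathcal X)h^\perp_{i-1}(\mathcal X).\hat x$ is computed by first applying Lemma \ref{lem:p-perp,p-tilde,p-tildeperp} to get $p^\perp(\mathcal X).\hat x = \tau q(1-q)u^\perp_0$, then applying $h^\perp_{i-1}(\mathcal X)$ to $u^\perp_0 \in M\hat x^\perp$; by Proposition \ref{prop:LSonMx,Mxp}(II)(iii) and the analogue of \eqref{hi(X)=vi(A).E*iu} for $\Phi^\perp$, this yields a known multiple of $u^\perp_{i-1}$. Then substitute the expressions \eqref{eq(1):C+-;Aix,upi}, \eqref{eq(2):C+-;Aix,upi} that write $\hat C^\pm$ in terms of $A_i\hat x$ and $u^\perp_{i-1}$, and check the scalars in \eqref{eq(1):NonSymDqHahn}, \eqref{eq(2):NonSymDqHahn} have been rigged so that the combination collapses exactly to $\hat C^-_i$ resp. $\hat C^+_i$. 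For the edge cases $i=0$ and the top degree I would invoke Lemma \ref{lem:h-perp(D-1)vanish} (which says $h^\perp_{D-1}(\mathcal X)$ kills $M\hat x^\perp$, so the $p^\perp h^\perp_{D-1}$ term disappears) together with the explicit $\pi.\hat C^\pm_0$, $\pi.\hat C^+_{D-1}$ values below \eqref{eq:pi.Cpm(i)}.

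Next I would run the mirror-image argument for $\widetilde\ell^-_i$ and $\widetilde\ell^+_i$, now using the decomposition \eqref{ODS:MC,MCp} of $\mathbf W$ into $M\hat C \oplus M\hat C^\perp$: compute $\widetilde p(\mathcal X).\hat x = (1-q)\hat C$ and $\widetilde p^\perp(\mathcal X).\hat x = q^{D-N}\widetilde u^\perp_0$ from Lemma \ref{lem:p-perp,p-tilde,p-tildeperp}, then push $\widetilde h_i(\mathcal X)$ through $\hat C$ (landing in $M\hat C$, giving a multiple of $\hat C_i$ by Proposition \ref{prop:LSonMC,MCp}(I)(iii)) and $\widetilde h^\perp_i(\mathcal X)$ through $\widetilde u^\perp_0$ (landing in $M\hat C^\perp$, giving a multiple of $\widetilde u^\perp_i$ by Proposition \ref{prop:LSonMC,MCp}(II)(iii)), and finally substitute \eqref{eq(1):C+-;Ci,tilde upi}, \eqref{eq(2):C+-;Ci,tilde upi} to collapse the combinations to $\hat C^-_i$ and $\hat C^+_i$. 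The main obstacle I anticipate is purely bookkeeping: correctly tracking all the $q$-shifted Pochhammer prefactors — the parameter sequences of $\Phi^\perp$, $\widetilde\Phi$, $\widetilde\Phi^\perp$ involve the shifts $b\mapsto bq^{\mp1}$, $r\mapsto rq$, $D\mapsto D-1,D-2$, so the normalizing constants in \eqref{hi(X)=vi(A)} change accordingly, and one must confirm these exactly cancel the denominators $\tau^i(q^D-1)(q;q)_i^2$ etc.\ appearing in the definitions of $\ell^\pm_i$, $\widetilde\ell^\pm_i$. I would organize this as two short computations (one per decomposition), checking degrees and leading coefficients first to confirm monicity and the correct span, then pinning down the constant by evaluating at a single convenient value or by comparing the coefficient of one basis vector.
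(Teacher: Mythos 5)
Your proposal is correct and follows essentially the same route as the paper's proof: it rests on the decomposition formulas \eqref{eq(1):C+-;Aix,upi}--\eqref{eq(2):C+-;Ci,tilde upi}, the identification of $h_i(\mathcal{X})$, $h^\perp_i(\mathcal{X})$, $\widetilde h_i(\mathcal{X})$, $\widetilde h^\perp_i(\mathcal{X})$ with the standard-basis polynomials via \eqref{vi(A)E*i} and Theorem \ref{thm:1stResult}, the seed computations of Lemma \ref{lem:p-perp,p-tilde,p-tildeperp}, and Lemma \ref{lem:h-perp(D-1)vanish} for the $i=D-1$ case, exactly as in the paper.
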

\begin{proof}
We first show the equation $\hat{C}^+_i = \ell^+_i(\mathcal{X}).\hat{x}$, $0 \leq i \leq D-1$.
Recall the equation \eqref{eq(1):C+-;Aix,upi}.
We assume $0 \leq i \leq D-2$.
Applying \eqref{vi(A)E*i}  and \eqref{thm;eq(1);A,A*,wtA*} to each summand of the right side of \eqref{eq(1):C+-;Aix,upi} and using \eqref{prop:eq:h(i)} and \eqref{prop:eq:h-perp(i)}, we get 
\begin{equation*}\label{lem:pf:eq:C_i=ell_i(X)x}
	\hat{C}^+_i = \frac{q^{i+1}-1}{q^D-1}\frac{h_{i+1}(\mathcal{X})}{\tau^{i+1}(q;q)^2_{i+1}}\hat{x} + \frac{q^{i+1}}{q^D-1}\frac{h^\perp_i(\mathcal{X})}{\tau^iq^i(q;q)_i(q^2;q)_i}u^\perp_0.
\end{equation*}
Simplify the right side of this equation using the first equation in \eqref{lem:eq:three-poly-action}. 
We find $\hat{C}^+_i = \ell^+_i(\mathcal{X}).\hat{x}$, $0 \leq i \leq D-2$.
We now assume $i=D-1$. 
By the first equation of line \eqref{lem:eq:three-poly-action} and Lemma \ref{lem:h-perp(D-1)vanish}, it follows 
\begin{equation}\label{prop:eq:p-perp.h-perp=0}
	p^\perp(\mathcal{X})h^\perp_{D-1}(\mathcal{X}).\hat{x} = \tau q(1-q) h^\perp_{D-1}(\mathcal{X}).u^\perp_0 =0.
\end{equation}
By this comment, we find $\hat{C}^{+}_{D-1}=\ell^+_{D-1}(\mathcal{X}).\hat{x}$.
The desired result follows.
The remaining equations in \eqref{lem:eq:C_i=ell_i(X)x} are obtained in a similar way using \eqref{eq(2):C+-;Aix,upi}, \eqref{eq(1):C+-;Ci,tilde upi} and \eqref{eq(2):C+-;Ci,tilde upi}.
\end{proof}

\begin{remark}\label{rmk:degree.ell}
By \eqref{lem:eq:C_i=ell_i(X)x}, we have $\ell^-_i = \widetilde \ell^-_i$, $\ell^+_i = \widetilde \ell^+_i$, $0 \leq i \leq D-1$. From this, it follows that 
\begin{itemize}
	\itemsep-0.1em
	\item $\ell^-_i$ has the highest degree $i$ and the lowest degree is $-i$;
	\item $\ell^+_i$ has the highest degree $i$ and the lowest degree $-i-1$. 
\end{itemize}
\end{remark}

Consider the subspace $\mathcal{L}$ of $\mathbb{C}[\zeta, \zeta^{-1}]$ defined by
\begin{equation}\label{eq:space L}
	\mathcal{L} = \sum^{D-1}_{i=-D} \mathbb{C}\zeta^i.
\end{equation}
Observe that $\dim{\mathcal{L}}=2D$ and by Remark \ref{rmk:degree.ell} $\{\ell^\pm_i\}^{D-1}_{i=0}$ forms a basis for $\mathcal{L}$.
We call $\ell^\pm_i=\widetilde \ell^\pm_i$, $0 \leq i \leq D-1$, the \emph{non-symmetric dual $q$-Hahn polynomials}; see Figure \ref{NonsymDualqHahn}.
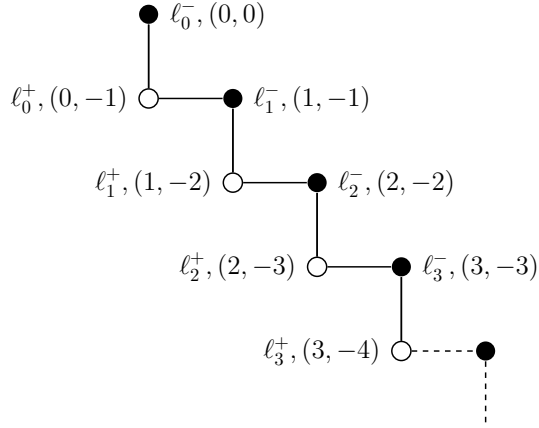
\begin{figure}
\centering
\scalemath{0.7}{
\begin{tikzpicture}
  [scale=.8,thick,auto=left, every node/.style={circle}] 
  \node[fill=black,label=right:{\Large$\ell^-_0, (0,0)$}] (n1) at (0,0) {};
  \node[draw, label=left:{\Large$\ell^+_0, (0,-1)$}] (n2) at (0,-2)  {};
  \node[fill=black,label=right:{\Large$\ell^-_1, (1,-1)$}] (n3) at (2,-2)  {};
  \node[draw,label=left:{\Large$\ell^+_1, (1,-2)$}] (n4) at (2,-4) {};
  \node[fill=black,label=right:{\Large$\ell^-_2, (2,-2)$}] (n5) at (4,-4)  {};
  \node[draw,label=left:{\Large$\ell^+_2, (2,-3)$}] (n6) at (4,-6)  {};
  \node[fill=black,label=right:{\Large$\ell^-_3, (3,-3)$}] (n7) at (6,-6)  {};
  \node[draw,label=left:{\Large$\ell^+_3, (3,-4)$}] (n8) at (6,-8)  {};
  \node[fill=black,label=right:{}] (n9) at (8,-8)  {};
  \node (n10) at (8,-10)  {};

  \foreach \from/\to in 
  {n1/n2,n2/n3,n3/n4,n4/n5,n5/n6,n6/n7,n7/n8}
      \draw (\from) -- (\to);
      \draw (n1) -- (n2) ;
      \draw (n3) -- (n4) ;
      \draw (n5) -- (n6) ;
      \draw (n7) -- (n8) ;
      \draw (n8) -- (n9) [dashed];
      \draw (n9) -- (n10) [dashed];
            
\end{tikzpicture}}
\caption{Non-symmetric dual $q$-Hahn polynomials $\ell^\pm_i$}\label{NonsymDualqHahn}
\end{figure}
In this figure, each black node (resp. white node) represents the non-symmetric dual $q$-Hahn polynomial $\ell^-_i$ (resp. $\ell^+_i$).
For each $\ell^\pm_i$, the corresponding ordered pair $(m,n)$ means that $m$ is the highest degree and $n$ is the lowest degree of $\ell^\pm_i$. 
Figure \ref{NonsymDualqHahn} shows  how  $\ell^\pm_i$ are interpreted in a combinatorial sense; cf. Figure \ref{2-dim'l partition}.
We may regard the non-symmetric dual $q$-Hahn polynomials $\ell^\pm_i$ as a discretization of non-symmetric continuous dual $q$-Hahn polynomials.
We remark that non-symmetric continuous dual $q$-Hahn polynomials were used to prove the faithfulness of a so-called basic representation on the space of Laurent polynomials in one variable for the algebra $\mathcal{H}_\mathrm{V}$; cf. \cite[Section 2]{2014Mazzocco}.

\section{Recurrence and orthogonality relations}\label{Section:rec orth relations}

We continue to discuss non-symmetric dual $q$-Hahn polynomials $\ell^\pm_i$, $0 \leq i \leq D-1$.
In this section, we derive combinatorial recurrence and orthogonality relations for $\ell^\pm_i$ from the $\mathcal{H}_V$-module $\mathbf{W}$. 
We begin with a lemma.
\begin{lemma}\label{lem:min.poly(X)}
Recall $p^\perp$ and $h^\perp_{D-1}$ from \eqref{eq:p-perp,p-tilde,p-tildeperp} and \eqref{hperp(D-1)}, respectively. 
Recall $\mathcal{X}$ from \eqref{eq:X}.
The following (i)-(iii) hold.
\begin{itemize}
\item[(i)] The element $p^\perp(\mathcal{X})h^\perp_{D-1}(\mathcal{X})$ vanishes on $\mathbf{W}$.
\item[(ii)] Let $\mu$ be the polynomial in $\mathbb{C}[\zeta]$ defined by
\begin{equation}\label{eq:min.poly(X)}
	\mu(\zeta) := \zeta^Dp^\perp h^\perp_{D-1} = (\zeta-\tau)(\zeta-\tau^{-1}q^{-D})\prod^{D-1}_{i=1} (\zeta - \tau q^i)(\zeta-\tau^{-1} q^{-i}).
\end{equation}
Then $\mu$ is the minimal polynomial of $\mathcal{X}$ on $\mathbf{W}$.
\end{itemize}
\end{lemma}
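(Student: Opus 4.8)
The plan is to prove part (i) first, then deduce part (ii) from it together with a dimension/degree count. For part (i), recall that $\hat{x}=\hat{C}^-_0$ generates the $\mathcal{H}_\mathrm{V}$-module $\mathbf{W}$, so it suffices to show $p^\perp(\mathcal{X})h^\perp_{D-1}(\mathcal{X}).\hat{x}=0$; but this is exactly equation \eqref{prop:eq:p-perp.h-perp=0} established in the proof of Proposition \ref{prop:C_i=ell_i(X)x}, where we used the first identity of \eqref{lem:eq:three-poly-action} to write $p^\perp(\mathcal{X}).\hat{x}=\tau q(1-q)u^\perp_0$ with $u^\perp_0\in M\hat{x}^\perp$, and then Lemma \ref{lem:h-perp(D-1)vanish} to see $h^\perp_{D-1}(\mathcal{X})$ kills $M\hat{x}^\perp$. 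Since $\mathcal{X}$ commutes with itself, $p^\perp(\mathcal{X})$ and $h^\perp_{D-1}(\mathcal{X})$ commute, so $p^\perp(\mathcal{X})h^\perp_{D-1}(\mathcal{X})$ annihilates $\hat{x}$; applying elements of $\mathcal{H}_\mathrm{V}$ and using that they commute with any polynomial in $\mathcal{X}$ — which they do not in general, so instead I would argue that the kernel of $p^\perp(\mathcal{X})h^\perp_{D-1}(\mathcal{X})$ is not obviously a submodule. The cleaner route is: $\mathbf{W}=M\hat{x}\oplus M\hat{x}^\perp$ by \eqref{ODS:Mx,Mxp}, $M\hat{x}$ has $\mathcal{X}$-spectrum contained in $\{\tau q^j+\tau^{-1}q^{-j}\}$ suitably (it is a dual $q$-Hahn Leonard system of diameter $D$ with $\mathcal{A}$-eigenvalues giving $\mathcal{X}$-eigenvalues $\tau q^{j}$ and $\tau^{-1}q^{-j}$ for $0\le j\le D$), while $M\hat{x}^\perp$ has diameter $D-2$ with the shifted parameters. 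So I would compute the eigenvalues of $\mathcal{X}$ on each summand explicitly from Proposition \ref{prop:LSonMx,Mxp}.

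Concretely: on $M\hat{x}$, by Proposition \ref{prop:LSonMx,Mxp}(I) and Theorem \ref{thm:1stResult}, $A=\tau b\mathcal{A}+a$ acts with eigenvalues $\theta_j=a+bq^{-j}+cq^{j}$, hence $\mathcal{A}=\mathcal{X}+\mathcal{X}^{-1}$ has eigenvalues $\tau^{-1}q^{-j}+\tau^{-1}c b^{-1}q^{j}\cdot$... using $\tau^2=b^{-1}c$ I get $\mathcal{A}$-eigenvalue $\tau q^{j}+\tau^{-1}q^{-j}$ for $0\le j\le D$, so the $\mathcal{X}$-eigenvalues on $M\hat{x}$ are among $\{\tau q^{j},\tau^{-1}q^{-j}: 0\le j\le D\}$ (each such $j$ contributing to the factor $(\zeta-\tau q^{j})(\zeta-\tau^{-1}q^{-j})$). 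Since $\mathcal{X}$ is invertible with distinct eigenvalues on the $(D{+}1)$-dimensional $M\hat{x}$, exactly $D+1$ of these $2(D+1)$ values occur; one checks from the standard-basis action (Lemma \ref{lem:actionX}) or directly that on $M\hat{x}$ the $\mathcal{X}$-eigenvalues are precisely $\tau q^{0},\tau q^{1},\dots,\tau q^{D-1},\tau^{-1}q^{-D}$ — equivalently the zeros of $\zeta^{D}p^\perp h^\perp_{D-1}/\big((\zeta-\tau^{-1}q^{-1})\cdots\big)$; and similarly on $M\hat{x}^\perp$ the $\mathcal{X}$-eigenvalues are a subset of $\{\tau q^{j},\tau^{-1}q^{-j}\}$ with $j$ in the shifted range, all already among the roots of $h^\perp_{D-1}$ by \eqref{hperp(D-1)}. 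Multiplying $\mu(\zeta)=\zeta^{D}p^\perp h^\perp_{D-1}$ out as in \eqref{eq:min.poly(X)}, every eigenvalue of $\mathcal{X}$ on $\mathbf{W}$ is a root of $\mu$, and $\mu(\mathcal{X})=\zeta^{D}$-free since $\mathcal{X}$ invertible gives $\mu(\mathcal{X})=p^\perp(\mathcal{X})h^\perp_{D-1}(\mathcal{X})\mathcal{X}^{D}=0$; this is part (i).

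For part (ii), $\mathcal{X}$ is diagonalizable on $\mathbf{W}$ (it is a product $\mathcal{T}'\mathcal{T}$ of two diagonalizable-with-real-spectrum operators, and more directly $\mathbf{W}=M\hat{x}\oplus M\hat{x}^\perp$ with $\mathcal{X}$ multiplicity-free on each summand by the Leonard-system structure), so its minimal polynomial is $\prod(\zeta-\nu)$ over the distinct eigenvalues $\nu$. I would show this set of eigenvalues is exactly the root set of $\mu$: the $D{+}1$ eigenvalues on $M\hat{x}$ are $\{\tau q^{j}:0\le j\le D-1\}\cup\{\tau^{-1}q^{-D}\}$ and the $D{-}1$ eigenvalues on $M\hat{x}^\perp$ are $\{\tau^{-1}q^{-j}:1\le j\le D-1\}$, these $2D$ values are pairwise distinct (here $\tau=-q^{(-N-1)/2}$, $N\ge 2D$, and $q$ not a root of unity are used to rule out coincidences), and they are precisely the $2D$ roots listed in \eqref{eq:min.poly(X)}. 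Hence $\deg\mu=2D=\dim\mathbf{W}$, $\mu$ is monic, $\mu(\mathcal{X})=0$ by part (i), and no proper divisor of $\mu$ annihilates $\mathcal{X}$ since each root is a genuine eigenvalue; therefore $\mu$ is the minimal polynomial. The main obstacle is pinning down exactly \emph{which} eigenvalues occur on $M\hat{x}$ versus $M\hat{x}^\perp$ — i.e. verifying the two root subsets interlock correctly to give $\mu$ with no repetition and no omission. I would settle this either by reading off the $\mathcal{X}$-action on the standard bases $\{A_i\hat{x}\}$ and $\{u^\perp_i\}$ from Lemma \ref{lem:actionX} and computing characteristic polynomials, or more slickly by noting that $\{\ell^\pm_i\}_{i=0}^{D-1}$ is a basis of $\mathcal{L}=\sum_{i=-D}^{D-1}\mathbb{C}\zeta^i$ (Remark \ref{rmk:degree.ell}), that $\hat{x}\mapsto 1$, $\hat{C}^\pm_i\mapsto \ell^\pm_i(\zeta)$ identifies $\mathbf{W}$ with $\mathcal{L}$ as a module where $\mathcal{X}$ acts by multiplication modulo $\mu$, and that $\dim\mathcal{L}=2D=\deg\mu$ forces $\mu$ to be the minimal polynomial.
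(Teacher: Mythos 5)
Your main line of argument rests on a false premise: that $\mathcal{X}$ preserves the decomposition $\mathbf{W}=M\hat{x}\oplus M\hat{x}^\perp$, so that one may speak of ``the $\mathcal{X}$-eigenvalues on $M\hat{x}$'' and ``on $M\hat{x}^\perp$.'' Only $\mathcal{A}=\mathcal{X}+\mathcal{X}^{-1}$ (equivalently $A$, via \eqref{thm;eq(1);A,A*,wtA*}) leaves these summands invariant; $\mathcal{X}$ itself mixes them. This is visible later in the paper: in the basis $\mathcal{B}$ the blocks $\mathcal{X}(i)$, $1\leq i\leq D-1$, of Lemma \ref{lem:MatX_B} have non-zero off-diagonal entries, and the genuine eigenvectors $\mathbf{y}_{\pm i}$ of \eqref{eq(1):eigvecX}, \eqref{eq(2):eigvecX} are non-trivial combinations of $E_i\hat{x}\in M\hat{x}$ and $E_iu^\perp_0\in M\hat{x}^\perp$. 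Consequently your counts ``exactly $D+1$ of these values occur on $M\hat{x}$'' and the claimed split of the root set of $\mu$ between the two summands are not meaningful, and the diagonalizability of $\mathcal{X}$ is not established either (``product of two diagonalizable operators'' does not imply diagonalizable, and ``multiplicity-free on each summand by the Leonard-system structure'' again presupposes the invariance). Since your proof of (i) is exactly ``$\mathcal{X}$ is diagonalizable and every eigenvalue is a root of $\mu$,'' part (i) is not proved, and your appeal to $\mu(\mathcal{X})=p^\perp(\mathcal{X})h^\perp_{D-1}(\mathcal{X})\mathcal{X}^D=0$ at that point is circular.

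The fix is the simple argument you discarded at the start, and it is the paper's proof: you do not need the kernel of $p^\perp(\mathcal{X})h^\perp_{D-1}(\mathcal{X})$ to be an $\mathcal{H}_\mathrm{V}$-submodule, only to be invariant under polynomials in $\mathcal{X}$, which is automatic. By Proposition \ref{prop:C_i=ell_i(X)x} every basis vector satisfies $\hat{C}^\pm_i=\ell^\pm_i(\mathcal{X}).\hat{x}$, so $p^\perp(\mathcal{X})h^\perp_{D-1}(\mathcal{X}).\hat{C}^\pm_i=\ell^\pm_i(\mathcal{X})\,h^\perp_{D-1}(\mathcal{X})\,p^\perp(\mathcal{X}).\hat{x}=0$ by \eqref{prop:eq:p-perp.h-perp=0} (which is Lemma \ref{lem:h-perp(D-1)vanish} combined with the first identity of \eqref{lem:eq:three-poly-action}); this proves (i) on all of $\mathbf{W}$. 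For (ii), your closing ``slicker'' remark is essentially the right (and the paper's) argument: $\mu$ is monic of degree $2D$ and annihilates $\mathcal{X}$ by (i), while Proposition \ref{prop:C_i=ell_i(X)x} shows $\hat{x}$ is a cyclic vector for $\mathbb{C}[\mathcal{X},\mathcal{X}^{-1}]$ on the $2D$-dimensional space $\mathbf{W}$ (and $\mathcal{X}^{-1}$ is a polynomial in $\mathcal{X}$ once some polynomial with non-zero constant term annihilates $\mathcal{X}$), so no polynomial of degree less than $2D$ can annihilate $\mathcal{X}$; no eigenvalue bookkeeping between $M\hat{x}$ and $M\hat{x}^\perp$ is needed.
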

\begin{proof}
(i) Let $\varepsilon\in \{+,-\}$. By Lemma \ref{lem:h-perp(D-1)vanish}, Proposition \ref{prop:C_i=ell_i(X)x} and \eqref{prop:eq:p-perp.h-perp=0}, we have
\begin{equation*}
	p^\perp(\mathcal{X})h^\perp_{D-1}(\mathcal{X}).\hat{C}^\varepsilon_i = \ell^\varepsilon_i(\mathcal{X})h^\perp_{D-1}(\mathcal{X})p^\perp(\mathcal{X}).\hat{x} = 0, \qquad 0 \leq i \leq D-1.
\end{equation*}
(ii) Observe that $\mu$ has degree $2D$ and $\mu(\mathcal{X})=0$ on $\mathbf{W}$ by part (i). From Proposition \ref{prop:C_i=ell_i(X)x}, the result follows.
\end{proof}
Define the Laurent polynomials $\ell^\pm_{-1}$ and $\ell^\pm_D$ in $\mathbb{C}[\zeta, \zeta^{-1}]$ by $\ell^\pm_{-1}:=0$ and 
\begin{align*}
	& \ell^-_D :=\frac{1}{\tau^D(q;q)^2_D}p^\perp h^\perp_{D-1} = \frac{1}{\tau^D(q;q)^2_D}(\zeta^D+\cdots +q^{-D}\zeta^{-D}), \\
	& \ell^+_D :=\frac{\tau q^{D+1}-\zeta^{-1}}{\tau^{D+1}(q;q)_D(q;q)_{D+1}}p^\perp h^\perp_{D-1} = \frac{\tau q^{D+1}-\zeta^{-1}}{\tau^{D+1}(q;q)_D(q;q)_{D+1}}(\zeta^D+\cdots +q^{-D}\zeta^{-D}).
\end{align*}
Observe that 
\begin{equation*}
	\ell^-_D \equiv \frac{\zeta^D}{\tau^D(q;q)^2_D} \pmod{\mathcal{L}}, 
	\qquad
	\ell^+_D \equiv \frac{\tau q^{D+1}\zeta^D - q^{-D}\zeta^{-D-1}}{\tau^{D+1}(q;q)_D(q;q)_{D+1}} \pmod{\mathcal{L}}.
\end{equation*}
Moreover, by Lemma \ref{lem:min.poly(X)}(i) $\ell^\pm_D(\mathcal{X})$ vanish on $\mathbf{W}$.
By these comments and using \eqref{eq(1):NonSymDqHahn}, \eqref{eq(2):NonSymDqHahn} at $i=D-1$, we routinely find 
\begin{align}
	\zeta \ell^-_{D-1}& \equiv \tau(1-q^D)^2 \ell^-_D \qquad \pmod{\mathcal{L}} \label{eq:zeta.ell-(D-1)} \\
	 \zeta \ell^+_{D-1} &\equiv \tau q^D(1-q^D) \ell^-_D \qquad \pmod{\mathcal{L}} \label{eq:zeta.ell+(D-1)}\\
	 \zeta^{-1}\ell^+_{D-1} &\equiv \tau q^{D+1}(q^D-1) \ell^-_D + \tau(q^D-1)(q^{D+1}-1)\ell^+_D \qquad \pmod{\mathcal{L}}. \label{eq:zeta-1ell+(D-1)}
\end{align}
We shall now give the recurrence relations for $\ell^{\pm}_i$, $0 \leq i \leq D-1$.
\begin{theorem}\label{thm:3rdResult}
The following (i), (ii) hold.
\begin{itemize}
\item[(i)] For $0 \leq i \leq D-1$, $\zeta \ell^-_i$ and $\zeta^{-1} \ell^-_i$ are respectively given as linear combination with the following terms and coefficients.
\begin{align*}
	&\zeta \ell^-_i:  && \zeta^{-1} \ell^-_i:  \\
	& \begin{tabular}{c|c}
	term & coefficient \\
	\hline \hline \\ [-0.8em]
	$\ell^+_{i-1}$ & $\tau(q^i-q^D)(q^{N-D+1}-q^{i+1}+1) $\\[0.5em]
	$\ell^-_i$ & $\tau q^i(q^{N-D+1}-q^{i+1}+1)$ \\[0.5em]
	$\ell^+_i$ & $\tau (1-q^{i+1})(q^D-q^{i+1}+1)$ \\[0.5em]
	$\ell^-_{i+1}$ & $\tau (q^{i+1}-1)^2$ \\[0.5em]
	\end{tabular}, \quad
	&&
	\begin{tabular}{c|c}
	term & coefficient \\
	\hline \hline \\ [-0.8em]
	$\ell^-_{i-1}$ & $\tau(1-q^{i-D})(q^{N+1}-q^{D+i}) $\\[0.5em]
	$\ell^+_{i-1}$ & $\tau (q^D-q^{i})(q^{N-D+1}-q^i+1)$ \\[0.5em]
	$\ell^-_{i}$ & $\tau q^{i+1}(q^D-q^i+1)$ \\[0.5em]
	$\ell^+_{i}$ & $\tau (q^{i+1}-1)(q^D-q^i+1)$ \\[0.5em]
	\end{tabular}.
\end{align*}

\item[(ii)] For $0 \leq i \leq D-1$, $\zeta \ell^+_i$ and $\zeta^{-1} \ell^+_i$ are respectively given as linear combination with the following terms and coefficients.
\begin{align*}
	& \zeta\ell^+_i :  && \zeta^{-1}\ell^+_i : \\
	&\begin{tabular}{c|c}
	term & coefficient \\
	\hline \hline \\ [-0.8em]
	$\ell^+_{i-1}$ & $\tau q^{N+1}(q^{i-D}-1)(q^{D-N+i}-1)$\\[0.5em]
	$\ell^-_{i}$ & $\tau q^{N-D+1+i}(q^{D-N+i}-1)$ \\[0.5em]
	$\ell^+_{i}$ & $\tau q^{i+1}(q^D-q^{i+1}+1)$ \\[0.5em]
	$\ell^-_{i+1}$ & $\tau q^{i+1}(1-q^{i+1})$ \\[0.5em]
	\end{tabular}, \quad
	&&
	\begin{tabular}{c|c}
	term & coefficient \\
	\hline \hline \\ [-0.8em]
	$\ell^-_{i}$ & $\tau q^{N-D+2+i}(1-q^{D-N+i})$\\[0.5em]
	$\ell^+_{i}$ & $\tau q^{i+1}(q^{N-D+1}-q^{i+1}+1)$ \\[0.5em]
	$\ell^-_{i+1}$ & $\tau q^{i+2}(q^{i+1}-1)$ \\[0.5em]
	$\ell^+_{i+1}$ & $\tau (q^{i+1}-1)(q^{i+2}-1)$ \\[0.5em]
	\end{tabular}.
\end{align*} 
\end{itemize}
\end{theorem}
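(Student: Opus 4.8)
The plan is to derive both recurrences by transporting the action of $\mathcal{X}$ and $\mathcal{X}^{-1}$ on the basis $\mathcal{C}$ of $\mathbf{W}$ back to $\mathbb{C}[\zeta,\zeta^{-1}]$ via the correspondence $\hat{C}^{\pm}_i=\ell^{\pm}_i(\mathcal{X}).\hat{x}$ of Proposition~\ref{prop:C_i=ell_i(X)x}. Concretely, I would introduce the evaluation map $\psi\colon\mathbb{C}[\zeta,\zeta^{-1}]\to\mathbf{W}$, $f\mapsto f(\mathcal{X}).\hat{x}$, so that $\psi(\ell^{\pm}_i)=\hat{C}^{\pm}_i$ for $0\le i\le D-1$. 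Applying $\mathcal{X}^{\pm1}$ to $\psi(\ell^{\pm}_i)$ and invoking Lemma~\ref{lem:actionX} expresses $\psi(\zeta^{\pm1}\ell^{\pm}_i)$ as a linear combination of the $\hat{C}^{\pm}_j=\psi(\ell^{\pm}_j)$ with exactly the coefficients tabulated in Theorem~\ref{thm:3rdResult}; indeed, term by term, the coefficient tables of the theorem are literally those of Lemma~\ref{lem:actionX} (with the convention that any $\hat{C}^{\pm}_D$ is $0$, matching the fact that $\ell^{\pm}_D(\mathcal{X})=0$ on $\mathbf{W}$ by Lemma~\ref{lem:min.poly(X)}(i)). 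So all that remains is to promote these identities in $\mathbf{W}$ to genuine identities of Laurent polynomials, i.e.\ to control $\ker\psi$.

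For that I would first record that $\psi$ restricts to a linear isomorphism $\mathcal{L}\to\mathbf{W}$: since $\{\ell^{\pm}_i\}^{D-1}_{i=0}$ is a basis of $\mathcal{L}$ (by the degree data in Remark~\ref{rmk:degree.ell}) and $\psi$ sends it onto the basis $\mathcal{C}$ of $\mathbf{W}$, this is immediate; more precisely, $\hat{x}$ is cyclic for $\mathbb{C}[\mathcal{X},\mathcal{X}^{-1}]$ (the $\hat{C}^{\pm}_i$ span $\mathbf{W}$) and, by Lemma~\ref{lem:min.poly(X)}, $\mu$ of degree $2D$ generates $\ker\psi$, so $\mathcal{L}$ is a complement to it. Then for $0\le i\le D-2$ every monomial appearing on either side of the four asserted recurrences lies in $\mathcal{L}$ — this uses only the bounds "$\ell^{-}_i$ has degrees in $[-i,i]$, $\ell^{+}_i$ in $[-i-1,i]$" from Remark~\ref{rmk:degree.ell} — and the same is true of $\zeta^{-1}\ell^{-}_i$ for every $i$; in all of these cases the equality already holds in $\mathcal{L}$, hence as Laurent polynomials by injectivity of $\psi|_{\mathcal{L}}$.

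The genuinely delicate part, and the one I expect to be the main obstacle, is the three boundary instances $\zeta\ell^{-}_{D-1}$, $\zeta\ell^{+}_{D-1}$, $\zeta^{-1}\ell^{+}_{D-1}$, where the left-hand side picks up a monomial of degree $D$ (or $-D-1$) and thus leaves $\mathcal{L}$; this is precisely where the extension polynomials $\ell^{-}_D,\ell^{+}_D$ appear on the right. Here I would work in the larger space $\mathcal{L}':=\sum^{D}_{i=-D-1}\mathbb{C}\zeta^i$, on which $\{\ell^{\pm}_i\}^{D}_{i=0}$ is a basis (add the genuine $\zeta^{D}$-term of $\ell^{-}_D$ and $\zeta^{-D-1}$-term of $\ell^{+}_D$ to the basis of $\mathcal{L}$), and note $\ker(\psi|_{\mathcal{L}'})=\mathrm{span}\{\ell^{-}_D,\ell^{+}_D\}$ by a dimension count using $\ell^{\pm}_D(\mathcal{X})=0$. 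Applying $\psi$ then shows each of the three left-hand sides agrees with the claimed right-hand side modulo $\mathrm{span}\{\ell^{-}_D,\ell^{+}_D\}$; the coefficients of $\ell^{-}_D$ and $\ell^{+}_D$ themselves are supplied by the congruences \eqref{eq:zeta.ell-(D-1)}, \eqref{eq:zeta.ell+(D-1)}, \eqref{eq:zeta-1ell+(D-1)} proved just before the theorem, and these match the corresponding table entries at $i=D-1$. Combining the two pieces yields equality on the nose. Thus no new computation is needed beyond Lemmas~\ref{lem:actionX} and~\ref{lem:min.poly(X)}; the only real work is the bookkeeping verifying that the overflow from multiplying $\ell^{\pm}_{D-1}$ by $\zeta^{\pm1}$ is absorbed exactly by the $\ell^{\pm}_D$-terms, i.e.\ that the recurrence closes up consistently against the minimal polynomial of $\mathcal{X}$.
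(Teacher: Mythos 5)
Your proposal is correct and takes essentially the same route as the paper's proof: both rest on Lemma \ref{lem:actionX} and Proposition \ref{prop:C_i=ell_i(X)x} to transfer the $\mathcal{X}^{\pm1}$-action on the $\hat{C}^{\pm}_i$ back to Laurent polynomials, use the degree bounds of Remark \ref{rmk:degree.ell} to identify the three boundary cases, and invoke the congruences \eqref{eq:zeta.ell-(D-1)}--\eqref{eq:zeta-1ell+(D-1)} to supply the $\ell^{\pm}_D$-coefficients there. Your explicit bookkeeping with the evaluation map $\psi$ and the enlarged space $\mathcal{L}'$ simply makes precise the injectivity argument that the paper leaves implicit.
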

\begin{proof}
By Remark \ref{rmk:degree.ell}, the Laurent polynomials $\zeta\ell^\pm_i$, $\zeta^{-1}\ell^\pm_i$  belong to $\mathcal{L}$ except $\zeta \ell^-_{D-1}$, $\zeta \ell^+_{D-1}$, and $\zeta^{-1}\ell^+_{D-1}$.
By Lemma \ref{lem:actionX} and Proposition \ref{prop:C_i=ell_i(X)x}, the Laurent polynomials $\zeta\ell^\pm_i$, $\zeta^{-1}\ell^\pm_i$ belonging to $\mathcal{L}$ are given as linear combination as shown in the above tables.
For the remaining three cases, use \eqref{eq:zeta.ell-(D-1)}--\eqref{eq:zeta-1ell+(D-1)}. 
Then again, by Lemma \ref{lem:actionX} and Proposition \ref{prop:C_i=ell_i(X)x}, the desired result follows.
\end{proof}

We now discuss orthogonality relations for $\ell^\pm_i$.
We first find the eigenvalues of $\mathcal{X}$ on $\mathbf{W}$.
From \eqref{eq:min.poly(X)}, $\mu$ has $2D$ mutually distinct zeros 
\begin{equation}\label{eq:eigvaluesXonW}
	\lambda_i := 
	\begin{cases}
	\tau q^i, & i=0,1,\ldots, D-1, \\
	\tau^{-1} q^{i}, & i = -1, -2, \ldots, -D,
	\end{cases}
\end{equation}
and hence $\mathcal{X}$ is multiplicity-free on $\mathbf{W}$.
Next, we find eigenvectors of $\mathcal{X}$ corresponding to $\lambda_i$, $-D \leq i \leq D-1$.
Recall a $\Phi^*$-standard basis $\{E_i\hat{x}\}^D_{i=0}$ for $M\hat{x}$ and a $\Phi^{\perp*}$-standard basis $\{E_iu^\perp_0\}^{D-1}_{i=1}$ for $M\hat{x}^\perp$.
We consider the following ordered basis $\mathcal{B}$ for $\mathbf{W}$:
\begin{equation*}\label{eq:dual-basis.W}
	\mathcal{B} = \{E_0\hat{x}, E_1\hat{x}, E_1{u}^\perp_0, E_2\hat{x}, E_2{u}^\perp_0, \ldots,  E_{D-1}\hat{x}, E_{D-1}{u}^\perp_0, E_D\hat{x} \}.
\end{equation*}
Observe that $\mathcal{B}$ is orthogonal.
Recall the projection $\pi$ (resp. $\widetilde \pi$) from $\mathbf{W}$ onto $M\hat{x}$ (resp. $M\hat{C}$).

\begin{lemma}\label{lem:Mat.wtpi_B} 
The following (i), (ii) hold.
\begin{itemize}
\item[(i)] The matrix representing $\pi$ with respect to $\mathcal{B}$ is
\begin{equation*}
	\mathrm{blockdiag}\Big[ \pi(0), \pi(1), \ldots, \pi(D-1), \pi(D) \Big],
\end{equation*}
where $\pi(0) = \pi(D)=[1]$ and $\pi(i)=\mathrm{diag}(1,0)$ for $1 \leq i \leq D-1$.
\item[(ii)] The matrix representing $\widetilde \pi$ with respect to $\mathcal{B}$ is
\begin{equation*}
	\mathrm{blockdiag}\Big[\widetilde\pi(0), \widetilde\pi(1), \ldots, \widetilde\pi(D-1), \widetilde\pi(D)\Big],
\end{equation*}
where $\widetilde\pi(0) = [1]$, $\widetilde\pi(D)=[0]$, and $\widetilde\pi(i)$, $1 \leq i \leq D-1$, is a $2\times 2$ matrix
\begin{equation*}
	\begin{bmatrix}
	\dfrac{q^i(q^{D-i}-1)(q^{N-D-i+1}-1)}{(q^D-1)(q^{N-D+1}-1)} & \dfrac{q^{i-1}(q^i-1)(q^{D-i}-1)(q^{N-i+1}-1)(q^{N-D-i+1}-1)}{(q-1)(q^D-1)(q^{N-D+1}-1)} \\[1em]
	\dfrac{q(q-1)}{(q^{N-D+1}-1)(q^D-1)} & \dfrac{(q^i-1)(q^{N-i+1}-1)}{(q^{N-D+1}-1)(q^D-1)}
	\end{bmatrix}.
\end{equation*}
\end{itemize}
\end{lemma}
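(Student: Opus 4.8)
The plan is to exploit that both orthogonal projections commute with the adjacency matrix $A$ on $\mathbf{W}$: $\pi$ does because $M\hat{x}$ and $M\hat{x}^\perp$ are $T$-modules with $A\in T$, while $\widetilde\pi$ does because $M\hat{C}$ and $M\hat{C}^\perp$ are $\widetilde T$-modules with $A\in\widetilde T$. Moreover $\mathcal{B}$ is a basis of $A$-eigenvectors, since $AE_i\hat{x}=\theta_i E_i\hat{x}$ and $AE_i u^\perp_0=\theta_i E_i u^\perp_0$; by Propositions \ref{prop:LSonMx,Mxp}(I),(II) the eigenvalue $\theta_0$ (resp. $\theta_D$) occurs only inside $M\hat{x}$, so its eigenspace is the line $\mathbb{C}E_0\hat{x}$ (resp. $\mathbb{C}E_D\hat{x}$), whereas for $1\le i\le D-1$ the eigenvalue $\theta_i$ occurs once on $M\hat{x}$ and once on $M\hat{x}^\perp$, so it has the two-dimensional eigenspace $V_i:=\mathrm{span}\{E_i\hat{x},E_i u^\perp_0\}$. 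Since $\pi$ and $\widetilde\pi$ preserve every $A$-eigenspace, their matrices in the basis $\mathcal{B}$ have exactly the block-diagonal shape $\mathrm{blockdiag}[\cdot(0),\cdot(1),\dots,\cdot(D-1),\cdot(D)]$ asserted, with $1\times 1$ outer blocks (for $\theta_0,\theta_D$) and $2\times 2$ inner blocks (for $\theta_1,\dots,\theta_{D-1}$).

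For part (i): each $E_i\hat{x}$ lies in $M\hat{x}$ and each $E_i u^\perp_0$ lies in $M\hat{x}^\perp$ (because $u^\perp_0\in M\hat{x}^\perp$ and $M\hat{x}^\perp$ is $T$-invariant), so $\pi E_i\hat{x}=E_i\hat{x}$ and $\pi E_i u^\perp_0=0$. This gives $\pi(0)=\pi(D)=[1]$ and $\pi(i)=\mathrm{diag}(1,0)$ for $1\le i\le D-1$, completing part (i).

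For part (ii): first, $E_0\hat{x}$ is a scalar multiple of $\hat{X}$, hence of $E_0\hat{C}$, so $E_0\hat{x}\in M\hat{C}$ and $\widetilde\pi(0)=[1]$; and by Proposition \ref{prop:LSonMC,MCp}(I) the scalar $\theta_D$ is not an eigenvalue of $A$ on $M\hat{C}$, so $\mathbb{C}E_D\hat{x}\subseteq M\hat{C}^\perp$ and $\widetilde\pi(D)=[0]$. Now fix $1\le i\le D-1$. As $M\hat{C}$ and $M\hat{C}^\perp$ are $A$-invariant and mutually orthogonal, $\widetilde\pi|_{V_i}$ is the orthogonal projection onto $V_i\cap M\hat{C}$, which is the $\theta_i$-eigenspace of $A|_{M\hat{C}}$; since the $E_j$ are the primitive idempotents of $\widetilde\Phi$ and $\hat{C}=\hat{C}_0\in M\hat{C}$, this is the line $\mathbb{C}E_i\hat{C}$. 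Writing $\hat{C}=\hat{x}+\hat{C}^+_0$ and applying \eqref{eq(1):C+-;Aix,upi} at index $1$ together with $AE_i\hat{x}=\theta_i E_i\hat{x}$, one gets
\begin{equation*}
	E_i\hat{C}=\gamma_i E_i\hat{x}+\delta_i E_i u^\perp_0,\qquad\text{where}\qquad \gamma_i=1+\frac{(q-1)\theta_i}{q^D-1},\quad\delta_i=\frac{q}{q^D-1}.
\end{equation*}
Hence $\widetilde\pi(i)$ is the matrix, in the orthogonal basis $(E_i\hat{x},E_i u^\perp_0)$ of $V_i$, of the orthogonal projection onto $\mathbb{C}(\gamma_i E_i\hat{x}+\delta_i E_i u^\perp_0)$, namely
\begin{equation*}
	\widetilde\pi(i)=\frac{1}{\gamma_i^2 n_1+\delta_i^2 n_2}\begin{bmatrix}\gamma_i^2 n_1&\gamma_i\delta_i n_2\\ \gamma_i\delta_i n_1&\delta_i^2 n_2\end{bmatrix},\qquad n_1:=\lVert E_i\hat{x}\rVert^2,\quad n_2:=\lVert E_i u^\perp_0\rVert^2.
\end{equation*}
Since $E^*_0|_{M\hat{x}}$ and $E^*_1|_{M\hat{x}^\perp}$ are the rank-one orthogonal projections onto $\mathbb{C}\hat{x}$ and $\mathbb{C}u^\perp_0$, one has $n_1=\mathrm{trace}(E_iE^*_0|_{M\hat{x}})=\mathsf{m}_i(\Phi)$ and $n_2=\lVert u^\perp_0\rVert^2\,\mathsf{m}_{i-1}(\Phi^\perp)$, which are evaluated via \eqref{m_formula} from the parameter sequences in Proposition \ref{prop:LSonMx,Mxp}, while $\lVert u^\perp_0\rVert^2=(q^{D-1}-1)(q^{N-D}-1)(q^D-1)/(q-1)$ follows from \eqref{prop:eq:u-perp(i)} at index $0$ and Lemma \ref{cardinality:C+-}. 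Substituting and simplifying yields the claimed $2\times 2$ matrix.

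The main obstacle is this final simplification: one must massage $\gamma_i$, $\delta_i$, and the norm ratio $n_2/n_1$ into the stated $q$-rational entries, a moderately long computation with $q$-shifted factorials. It can be organized efficiently by noting that the claimed matrix is visibly a rank-one idempotent --- its trace is $1$ and its determinant $0$, both immediate from $(q^D-1)(q^{N-D+1}-1)=q^{N+1}-q^D-q^{N-D+1}+1$ --- so it is the unique rank-one idempotent whose range is $\mathbb{C}(\gamma_i,\delta_i)^t$ and which is self-adjoint for the metric $\mathrm{diag}(n_1,n_2)$. Thus it suffices to check (a) that the ratio of its $(1,1)$-entry to its $(2,1)$-entry equals $\gamma_i/\delta_i$, which reduces to the polynomial identity $(q-1)(q^D-q^i)+q^{i+1}(q^{D-i}-1)(q^{N-D-i}-1)=q^i(q^{D-i}-1)(q^{N-D-i+1}-1)$, and (b) that $n_1$ times its $(1,2)$-entry equals $n_2$ times its $(2,1)$-entry, which fixes the value of $n_2/n_1$ and is verified against the formulas \eqref{m_formula} for $\mathsf{m}_i(\Phi)$ and $\mathsf{m}_{i-1}(\Phi^\perp)$.
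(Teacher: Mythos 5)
Your proposal is correct, and for part (ii) it takes a genuinely different route from the paper's; part (i) coincides with the paper's one-line argument. For part (ii) the paper works entirely without the inner product: starting from \eqref{eq(1):C+-;Aix,upi}, \eqref{eq(1):C+-;Ci,tilde upi}, \eqref{eq(2):C+-;Ci,tilde upi} it expresses \emph{both} $E_i\hat{C}$ and $E_i\widetilde u^\perp_0$ in the basis $(E_i\hat{x},E_iu^\perp_0)$, inverts that $2\times2$ change of basis, and applies $\widetilde\pi$ using $\widetilde\pi.E_i\hat{C}=E_i\hat{C}$ and $\widetilde\pi.E_i\widetilde u^\perp_0=0$. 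You instead use only the range vector $E_i\hat{C}=\gamma_iE_i\hat{x}+\delta_iE_iu^\perp_0$ (your $\gamma_i,\delta_i$ agree with the paper's coefficients once your polynomial identity (a) is applied, and that identity does check out) together with the metric data $n_1=\lVert E_i\hat{x}\rVert^2$, $n_2=\lVert E_iu^\perp_0\rVert^2$, characterizing $\widetilde\pi(i)$ as the orthogonal projection onto a line expressed in an orthogonal basis; your block-diagonal reduction via $A$-eigenspaces and the identifications $\widetilde\pi(0)=[1]$, $\widetilde\pi(D)=[0]$ are also sound. What the paper's route buys is that no norm computations are needed at this stage; what yours buys is a compact verification of the stated matrix (trace one and determinant zero give idempotency by Cayley--Hamilton, the column ratio pins the range to $\mathbb{C}(\gamma_i,\delta_i)^t$, and self-adjointness for $\mathrm{diag}(n_1,n_2)$ reduces to $n_2/n_1$ matching the off-diagonal ratio), and indeed $n_2/n_1=q^{i-2}(q^i-1)(q^{D-i}-1)(q^{N-i+1}-1)(q^{N-D-i+1}-1)/(q-1)^2$, consistent with Lemma \ref{lem:norm.Eix}. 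Note the mild forward dependence: the norms you invoke are exactly those of Lemma \ref{lem:norm.Eix}, proved later in the paper but by independent means (\eqref{m_formula} and the parameter sequences of Propositions \ref{prop:LSonMx,Mxp} and \ref{prop:LSonMC,MCp}), so there is no circularity; your value of $\lVert u^\perp_0\rVert^2$ also agrees with the paper's.
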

\begin{proof}
(i) Since $\pi.E_i\hat{x}=E_i\hat{x}$, $0 \leq i \leq D$, and $\pi(M\hat{x}^\perp)=0$, the result follows. \\
(ii) Since $E_0\hat{x} \in M\hat{C}$ and $E_D\hat{x} \in M\hat{C}^\perp$, it follows that  $\widetilde \pi.E_0\hat{x} = E_0\hat{x}$ and $\widetilde \pi.E_D\hat{x} = 0$.
Assume $1 \leq i \leq D-1$.
We now compute $\widetilde \pi.E_i\hat{x}$ and $\widetilde \pi E_i.u^\perp_0$.
Recall $\hat{C}_0 = \hat{C}^-_0 + \hat{C}^+_0$.
Eliminate $\hat{C}^+_0$ using \eqref{eq(1):C+-;Aix,upi} at $i=1$ to obtain
\begin{equation}\label{eq(1):pf:mat.rep.wt pi_B}
	\hat{C}_0 = \hat{C}^-_0 + \frac{q-1}{q^D-1}A\hat{x} + \frac{q}{q^D-1}u^\perp_0.
\end{equation}
In this equation, eliminate $\hat{C}_0$ using \eqref{eq(1):C+-;Ci,tilde upi} at $i=0$ and solve the result for $\widetilde u^\perp_0$ to obtain
\begin{equation}\label{eq(2):pf:mat.rep.wt pi_B}
	\widetilde u^\perp_0 = (q^{N-D}-1)\hat{C}^-_0 + \frac{(q-1)^2}{q(1-q^D)}A\hat{x} + \frac{1-q}{q^D-1}u^\perp_0.
\end{equation}
Apply $E_i$ to both sides of each equation of \eqref{eq(1):pf:mat.rep.wt pi_B}, \eqref{eq(2):pf:mat.rep.wt pi_B} and simplify the result using $E_iA=\theta_iE_i$.
Then, by recalling $\hat{x}=\hat{C}^-_0$ and $\hat{C}=\hat{C}_0$,  we have
\begin{align}
	& E_i\hat{C} = \frac{q^i(q^{D-i}-1)(q^{N-D-i+1}-1)}{(q-1)(q^D-1)}E_i\hat{x} + \frac{q}{q^D-1}E_iu^\perp_0,  \label{eq(3):pf:mat.rep.wt pi_B}\\
	& E_i\widetilde u^\perp_0 = \frac{(q^i-1)(q^{N-i+1}-1)}{q(q^D-1)}E_i\hat{x} + \frac{1-q}{q^D-1}E_iu^\perp_0. \label{eq(4):pf:mat.rep.wt pi_B}
\end{align}
Solving the system of equations \eqref{eq(3):pf:mat.rep.wt pi_B}, \eqref{eq(4):pf:mat.rep.wt pi_B} for $E_i\hat{x}$ and $E_i u^\perp_0$, we find
\begin{align}
	& E_i\hat{x} = \frac{q-1}{q^{N-D+1}-1}E_i\hat{C} + \frac{q}{q^{N-D+1}-1}E_i\widetilde u^\perp_0, \label{eq(5):pf:mat.rep.wt pi_B}\\
	& E_i u^\perp_0 = \frac{(q^i-1)(q^{N-i+1}-1)}{q(q^{N-D+1}-1)}E_i\hat{C} + \frac{q^D(q^{i-D}-1)(q^{N-D-i+1}-1)}{(q^{N-D+1}-1)(q-1)}E_i\widetilde u^\perp_0.\label{eq(6):pf:mat.rep.wt pi_B}
\end{align}
Apply $\widetilde \pi$ to both sides of each of equations \eqref{eq(5):pf:mat.rep.wt pi_B}, \eqref{eq(6):pf:mat.rep.wt pi_B} and eliminate $E_i\hat{C}$ using \eqref{eq(3):pf:mat.rep.wt pi_B}. 
Simplify the result to obtain $\widetilde \pi.E_i\hat{x}$ and $\widetilde \pi.E_iu^\perp_0$, which are given by linear combinations of $E_i\hat{x}$ and $E_iu^\perp_0$. 
The desired result follows.
\end{proof}

\noindent
In Theorem \ref{thm:2ndResult} we have shown  how the projection $\pi$ (resp. $\widetilde \pi$) is related to $\mathcal{T}'$ (resp. $\mathcal{T}$) on $\mathbf{W}$.
Using this result and Lemma \ref{lem:Mat.wtpi_B} we obtain the following lemma.
\begin{lemma}\label{lem:MatTT'_B}
The following (i), (ii) hold.
\begin{itemize}
	\item[(i)] The matrix representing $\mathcal{T}$ with respect to $\mathcal{B}$ is 
	\begin{equation*}
	\mathrm{blockdiag}\Big[\mathcal{T}(0), \mathcal{T}(1), \ldots, \mathcal{T}(D-1), \mathcal{T}(D)\Big],
	\end{equation*}
	where $\mathcal{T}(0) = [k]$, $\mathcal{T}(D)=[-k^{-1}]$, and $\mathcal{T}(i)$, $1 \leq i \leq D-1$, is a $2\times 2$ matrix
	\begin{equation*}
	\begin{bmatrix}
	\dfrac{k(q^{N+1}+q^D-q^i-q^{N-i+1})}{q^D-1} & \dfrac{-kq^{i-1}(q^i-1)(q^{D-i}-1)(q^{N-i+1}-1)(q^{N-D-i+1}-1)}{(q-1)(q^D-1)} \\[1em]
	\dfrac{-kq(q-1)}{q^D-1} & \dfrac{k(q^{N-i+1}+q^i-q^{N-D+1}-1)}{q^D-1}
	\end{bmatrix}.
	\end{equation*}
	\item[(ii)] The matrix representing $\mathcal{T'}$ with respect to $\mathcal{B}$ is
\begin{equation*}
	\mathrm{blockdiag}\Big[\mathcal{T'}(0), \mathcal{T'}(1), \ldots, \mathcal{T'}(D-1), \mathcal{T'}(D)\Big],
\end{equation*}
where $\mathcal{T'}(0) = \mathcal{T'}(D)=[k']$ and $\mathcal{T'}(i)=\mathrm{diag}(k',-k'^{-1})$ for $1 \leq i \leq D-1$.
\end{itemize}
\end{lemma}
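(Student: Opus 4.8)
The plan is to compute the matrices of $\mathcal{T}$ and $\mathcal{T}'$ with respect to $\mathcal{B}$ by translating the already-established identities $\pi=(\mathcal{T}'+k'^{-1})/(k'+k'^{-1})$ and $\widetilde\pi=(\mathcal{T}+k^{-1})/(k+k^{-1})$ from Theorem \ref{thm:2ndResult} into explicit matrix form, since the matrices of $\pi$ and $\widetilde\pi$ in the basis $\mathcal{B}$ are already in hand from Lemma \ref{lem:Mat.wtpi_B}. First I would invert these relations to write $\mathcal{T}'=(k'+k'^{-1})\pi-k'^{-1}I$ and $\mathcal{T}=(k+k^{-1})\widetilde\pi-k^{-1}I$ on $\mathbf{W}$. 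Part (ii) is then immediate: since the matrix of $\pi$ in $\mathcal{B}$ is block diagonal with blocks $[1]$, $\mathrm{diag}(1,0),\ldots,\mathrm{diag}(1,0)$, $[1]$, the matrix of $\mathcal{T}'$ is block diagonal with blocks $(k'+k'^{-1})[1]-k'^{-1}[1]=[k']$ at the ends and $(k'+k'^{-1})\mathrm{diag}(1,0)-k'^{-1}I=\mathrm{diag}(k',-k'^{-1})$ in the middle, which is exactly the asserted form.

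For part (i), I would apply the same substitution to the matrix of $\widetilde\pi$ from Lemma \ref{lem:Mat.wtpi_B}(ii). The end blocks $\widetilde\pi(0)=[1]$ and $\widetilde\pi(D)=[0]$ yield $\mathcal{T}(0)=(k+k^{-1})[1]-k^{-1}[1]=[k]$ and $\mathcal{T}(D)=(k+k^{-1})[0]-k^{-1}[1]=[-k^{-1}]$, matching the claim. For the interior $2\times2$ blocks, I would form $(k+k^{-1})\widetilde\pi(i)-k^{-1}I$ using the explicit entries of $\widetilde\pi(i)$ given in Lemma \ref{lem:Mat.wtpi_B}(ii), and simplify. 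This requires recalling from \eqref{scalars:k,k',u} that $k=\sqrt{-1}\,q^{(D-N-1)/2}$, so that $k+k^{-1}=\sqrt{-1}\,(q^{(D-N-1)/2}-q^{(N-D+1)/2})$ and $k+k^{-1}=-k^{-1}(q^{N-D+1}-1)\cdot q^{(D-N-1)/2}/\sqrt{-1}$; more usefully, $k(k+k^{-1})=k^2+1=1-q^{D-N-1}=-q^{D-N-1}(q^{N-D+1}-1)$ and $k^{-1}(k+k^{-1})=k^{-2}+1=1-q^{N-D+1}=-(q^{N-D+1}-1)$. Writing the common denominator $q^{N-D+1}-1$ appearing in Lemma \ref{lem:Mat.wtpi_B}(ii) and tracking these factors, each of the four entries of $(k+k^{-1})\widetilde\pi(i)-k^{-1}I$ should collapse to the stated rational function of $q$ times $k$; the diagonal correction $-k^{-1}I$ only affects the $(1,1)$ and $(2,2)$ entries and produces the constant shifts needed to convert, e.g., $q^i(q^{D-i}-1)(q^{N-D-i+1}-1)$ into $q^{N+1}+q^D-q^i-q^{N-i+1}$ after multiplying through.

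I expect the main obstacle to be purely bookkeeping: correctly propagating the three-way interplay among the factor $k+k^{-1}$, the denominators in $\widetilde\pi(i)$, and the shift $-k^{-1}$, and verifying that the off-diagonal entries (which carry no shift) land on the asserted expressions after clearing denominators. There is no conceptual difficulty—everything reduces to Theorem \ref{thm:2ndResult}, Lemma \ref{lem:Mat.wtpi_B}, and the specialization \eqref{scalars:k,k',u}—but care is needed with signs coming from $k^2=-q^{D-N-1}$ and with the factor $q^{i-1}$ versus $q^i$ in the $(1,2)$ entry. An alternative cross-check I would perform is to verify directly from the block-diagonal matrices $\pmb T$, $\pmb T'$ (defined via \eqref{matrices:t(i),u'(i)}--\eqref{matrices:t'(0,D),u(0,D)}) together with the change of basis between $\mathcal{C}$ and $\mathcal{B}$—i.e., the transition given by \eqref{eq(1):C+-;Aix,upi}, \eqref{eq(2):C+-;Aix,upi} and the spectral decomposition $A=\sum\theta_iE_i$—that the resulting $2\times2$ blocks agree; this provides independent confirmation and also re-derives the characteristic data $\mathrm{trace}=k-k^{-1}$, $\det=-1$ of each interior block, consistent with the eigenvalues $k,-k^{-1}$ of $\mathcal{T}$.
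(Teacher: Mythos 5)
Your proposal is correct and follows exactly the paper's route: the paper's proof is precisely ``use Theorem \ref{thm:2ndResult} and Lemma \ref{lem:Mat.wtpi_B},'' i.e.\ invert $\pi=(\mathcal{T}'+k'^{-1})/(k'+k'^{-1})$ and $\widetilde\pi=(\mathcal{T}+k^{-1})/(k+k^{-1})$ and substitute the block matrices of $\pi$, $\widetilde\pi$, using $k^2=-q^{D-N-1}$; your entrywise verification of the interior blocks goes through as you describe. Only a trivial slip in a parenthetical identity: $k(k+k^{-1})=1-q^{D-N-1}=q^{D-N-1}(q^{N-D+1}-1)$ (no minus sign), which does not affect the argument since the computation runs through $k+k^{-1}=-k(q^{N-D+1}-1)$ and $-k^{-1}=kq^{N-D+1}$.
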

\begin{proof}
Use Theorem \ref{thm:2ndResult} and Lemma \ref{lem:Mat.wtpi_B}. 
\end{proof}

\begin{lemma}\label{lem:MatX_B}
The matrix representing $\mathcal{X}$ with respect to $\mathcal{B}$ is
\begin{equation*}
	\mathrm{blockdiag}\Big[ \mathcal{X}(0), \mathcal{X}(1), \ldots, \mathcal{X}(D-1), \mathcal{X}(D) \Big],
\end{equation*}
where $\mathcal{X}(0) = [\tau]$, $\mathcal{X}(D) = [\tau^{-1} q^{-D}]$, and $\mathcal{X}(i)$, $1 \leq i \leq D-1$, is a $2\times 2$ matrix
\begin{equation}\label{lem:eq:MatX(i)}
	\begin{bmatrix}
	\dfrac{\tau(q^{N+1}-q^i+q^D-q^{N+1-i})}{q^D-1} & \dfrac{\tau(q^i-1)(q^{D-i}-1)(q^{N-i+1}-1)(q^{i-1}-q^{N-D})}{(q-1)(q^D-1)} \\[1em]
	\dfrac{\tau q^{D+1}(1-q)}{q^D-1} & \dfrac{\tau(q^{N+D+1-i}+q^{D+i}-q^{N+1}-q^D)}{q^D-1}
	\end{bmatrix}.
\end{equation}
\end{lemma}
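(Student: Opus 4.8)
The plan is to obtain the matrix of $\mathcal{X} = \mathcal{T}'\mathcal{T}$ with respect to the basis $\mathcal{B}$ by a straightforward multiplication of already-computed matrices. Since $\mathcal{B}$ is block-diagonal-adapted (the one-dimensional blocks $\mathbb{C}E_0\hat{x}$ and $\mathbb{C}E_D\hat{x}$ at the ends, and the two-dimensional blocks $\mathrm{span}\{E_i\hat{x}, E_iu^\perp_0\}$ for $1\le i\le D-1$ in the middle), and since each of $\mathcal{T}$, $\mathcal{T}'$ preserves this block decomposition by Lemma \ref{lem:MatTT'_B}, the product $\mathcal{T}'\mathcal{T}$ does too. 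So it suffices to compute $\mathcal{X}(i) = \mathcal{T}'(i)\mathcal{T}(i)$ block by block.

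First I would handle the trivial end blocks: from Lemma \ref{lem:MatTT'_B}, $\mathcal{T}(0)=[k]$ and $\mathcal{T}'(0)=[k']$, so $\mathcal{X}(0)=[kk']=[\tau]$ using $\tau=kk'$ (noted just below \eqref{eq:X}); similarly $\mathcal{T}(D)=[-k^{-1}]$ and $\mathcal{T}'(D)=[k']$ give $\mathcal{X}(D)=[-k'k^{-1}]$, and one checks $-k'k^{-1}=\tau^{-1}q^{-D}$ from the explicit values \eqref{scalars:k,k',u} (indeed $k'/k = q^{(D-N-1)/2-(-D/2)}\cdot(\sqrt{-1}/\sqrt{-1}) \cdot(-1)$ after tracking signs, and $-k'k^{-1} = (kk')^{-1}q^{-D}$ since $kk'\cdot k'k^{-1}\cdot(-1) $ collapses to $q^{-D}$; concretely $\tau^{-1}q^{-D}= -q^{(N+1)/2}q^{-D}$ matches $-k'/k$). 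For the middle blocks $1\le i\le D-1$, I would simply multiply the explicit $2\times 2$ matrix $\mathcal{T}'(i)=\mathrm{diag}(k',-k'^{-1})$ against the displayed $2\times 2$ matrix $\mathcal{T}(i)$: left-multiplication by a diagonal matrix scales row $1$ by $k'$ and row $2$ by $-k'^{-1}$. Carrying out that scaling and simplifying each of the four entries — using $kk'=\tau$ in entries coming from row $1$ and $-k/k' $-type simplifications (again via \eqref{scalars:k,k',u}) in entries coming from row $2$ — yields exactly the matrix \eqref{lem:eq:MatX(i)}.

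The only genuine bookkeeping is verifying that the row-$2$ prefactor $-k'^{-1}\cdot k$ equals $\tau$ as well (so that all four entries acquire the same overall factor $\tau$): one has $-k'^{-1}k = -k/k'$, and from \eqref{scalars:k,k',u}, $k/k' = q^{(D-N-1)/2+D/2} = q^{(2D-N-1)/2}$, so $-k/k' = -q^{(2D-N-1)/2}$, while $\tau = -q^{(-N-1)/2}$; these differ by $q^D$, which is precisely the discrepancy absorbed by the $q^{D+1}$ versus the naive power in the lower-left entry and by the shifts $q^{N+D+1-i}, q^{D+i}$ in the lower-right entry of \eqref{lem:eq:MatX(i)}. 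I would verify this normalization once and then report that the remaining simplifications are routine polynomial algebra in $q$.

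The main obstacle — such as it is — is not conceptual but purely a matter of careful sign and exponent tracking through the square roots of $-1$ appearing in $k$ and $k'$; I would organize the computation so that every entry is reduced to the common factor $\tau$ times a Laurent polynomial in $q$, at which point comparison with \eqref{lem:eq:MatX(i)} is immediate. Alternatively, and perhaps more cleanly, one can bypass $\mathcal{T}$, $\mathcal{T}'$ entirely: express $E_i\hat{x}$ and $E_iu^\perp_0$ in terms of the basis $\mathcal{C}$ (equivalently, use the change-of-basis relations \eqref{eq(5):pf:mat.rep.wt pi_B}, \eqref{eq(6):pf:mat.rep.wt pi_B} together with $\hat{x}=\hat{C}^-_0$ and the definitions of $\pi$, $\widetilde\pi$), apply Lemma \ref{lem:actionX} to compute $\mathcal{X}.E_i\hat{x}$ and $\mathcal{X}.E_iu^\perp_0$, and read off the coefficients. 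Either route gives \eqref{lem:eq:MatX(i)}; I would present the first, citing Lemma \ref{lem:MatTT'_B} and $\tau=kk'$.
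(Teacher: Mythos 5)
Your proposal is correct and is essentially the paper's own proof: the paper simply recalls $\mathcal{X}=\mathcal{T}'\mathcal{T}$ and invokes Lemma \ref{lem:MatTT'_B}, which is exactly the block-by-block multiplication you carry out (your numerics check out, e.g.\ $kk'=\tau$, $-k'/k=\tau^{-1}q^{-D}$, and the row-two prefactor $-k'^{-1}k=\tau q^{D}$ accounting for the $q^{D+1}$ and shifted exponents in \eqref{lem:eq:MatX(i)}). Only note that your phrase ``equals $\tau$ as well'' for the row-two prefactor is momentarily misstated before you correct it to the factor $\tau q^{D}$; the computation itself is right.
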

\begin{proof}
Recall $\mathcal{X}=\mathcal{T}'\mathcal{T}$.
Use this and Lemma \ref{lem:MatTT'_B}. The result routinely follows.
\end{proof}

Note that for each $1\leq i \leq D-1$ the matrix $\mathcal{X}(i)$ of  \eqref{lem:eq:MatX(i)} has the eigenvalues $\lambda_{i}$ and $\lambda_{-i}$; cf \eqref{eq:eigvaluesXonW}.
Now we find eigenvectors of $\mathcal{X}$ associated with $\lambda_i$, $-D \leq i \leq D-1$.
Define
\begin{align}
	& \mathbf{y}_i := \frac{(q^{D-i}-1)(q^{N-i+1}-1)}{(q^D-1)(q^{N-2i+1}-1)} E_i\hat{x} + \frac{q^{D+1-i}(q-1)}{(q^D-1)(q^{N-2i+1}-1)}E_i u^\perp_0, \label{eq(1):eigvecX}\\
	& \mathbf{y}_{-i} := \frac{q^{D-i}(q^i-1)(q^{N-D-i+1}-1)}{(q^D-1)(q^{N-2i+1}-1)} E_i\hat{x} - \frac{q^{D-i+1}(q-1)}{(q^D-1)(q^{N-2i+1}-1)}E_iu^\perp_0, \label{eq(2):eigvecX}
\end{align}
for $1 \leq i \leq D-1$. Moreover, define
\begin{equation}\label{eq(3):eigvecX}
	\mathbf{y}_0 := E_0\hat{x}, \qquad \qquad \mathbf{y}_{-D} := E_D\hat{x}.
\end{equation}
Observe that (i) the vectors $\mathbf{y}_i$, $-D \leq i \leq D-1$, are real; (ii) $\mathbf{y}_i + \mathbf{y}_{-i} = E_i\hat{x}$, $1 \leq i \leq D-1$, so that
\begin{equation}\label{eq:sum(yi)=x}
	\sum^{D-1}_{i=-D} \mathbf{y}_i = 	\mathbf{y}_0 + \sum^{D-1}_{i=1} (\mathbf{y}_i + \mathbf{y}_{-i}) + \mathbf{y}_{-D}  = \sum^D_{i=0}E_i\hat{x} = \hat{x}.
\end{equation}
\begin{proposition}\label{prop:eigvectorX}
Let $-D \leq i \leq D-1$. 
On $\mathbf{W}$, the vector $\mathbf{y}_i$ is an eigenvector of $\mathcal{X}$ associated with the eigenvalue $\lambda_i$. 
Moreover, the vectors $\mathbf{y}_i$ form an eigenbasis of $\mathcal{X}$ for $\mathbf{W}$.
\end{proposition}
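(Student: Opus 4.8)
The plan is to verify directly that each $\mathbf{y}_i$ is an eigenvector of $\mathcal{X}$ with eigenvalue $\lambda_i$, using the block-diagonal matrix representation of $\mathcal{X}$ with respect to $\mathcal{B}$ from Lemma \ref{lem:MatX_B}, and then to conclude the eigenbasis claim by counting. First I would handle the two one-dimensional blocks: by \eqref{eq(3):eigvecX}, $\mathbf{y}_0 = E_0\hat{x}$ spans the block $\mathcal{X}(0)=[\tau]$, so $\mathcal{X}.\mathbf{y}_0 = \tau\mathbf{y}_0 = \lambda_0\mathbf{y}_0$; similarly $\mathbf{y}_{-D}=E_D\hat{x}$ spans $\mathcal{X}(D)=[\tau^{-1}q^{-D}]$, giving $\mathcal{X}.\mathbf{y}_{-D}=\lambda_{-D}\mathbf{y}_{-D}$. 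For the generic index $1\le i\le D-1$, the plane $\mathrm{span}\{E_i\hat{x}, E_iu^\perp_0\}$ is $\mathcal{X}$-invariant and $\mathcal{X}$ acts on it by the $2\times 2$ matrix $\mathcal{X}(i)$ in \eqref{lem:eq:MatX(i)}; as noted right after that lemma, $\mathcal{X}(i)$ has eigenvalues $\lambda_i = \tau q^i$ and $\lambda_{-i}=\tau^{-1}q^i$. The remaining task is to check that the specific linear combinations \eqref{eq(1):eigvecX} and \eqref{eq(2):eigvecX} are precisely the eigenvectors of $\mathcal{X}(i)$ for $\lambda_i$ and $\lambda_{-i}$ respectively.

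Concretely, in the ordered basis $(E_i\hat{x}, E_iu^\perp_0)$ the vector $\mathbf{y}_i$ has coordinate column
$$
\frac{1}{(q^D-1)(q^{N-2i+1}-1)}\begin{bmatrix} (q^{D-i}-1)(q^{N-i+1}-1) \\ q^{D+1-i}(q-1) \end{bmatrix},
$$
and I would substitute this into $\mathcal{X}(i)$ and verify the identity $\mathcal{X}(i)\mathbf{y}_i = \tau q^i\,\mathbf{y}_i$ entrywise; the bottom-entry check uses the top-left and bottom-left entries of $\mathcal{X}(i)$, namely $\tau(q^{N+1}-q^i+q^D-q^{N+1-i})/(q^D-1)$ and $\tau q^{D+1}(1-q)/(q^D-1)$, while the top-entry check additionally uses the top-right entry $\tau(q^i-1)(q^{D-i}-1)(q^{N-i+1}-1)(q^{i-1}-q^{N-D})/((q-1)(q^D-1))$. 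The computation for $\mathbf{y}_{-i}$ against the eigenvalue $\tau^{-1}q^i$ is entirely analogous, using the bottom-right entry of $\mathcal{X}(i)$ as well. These are routine polynomial identities in $q$ (after clearing the common denominators they become polynomial equalities that factor transparently), so I would not spell them out in full; I would simply say ``a direct computation using Lemma \ref{lem:MatX_B} shows $\mathcal{X}(i)\mathbf{y}_i=\lambda_i\mathbf{y}_i$ and $\mathcal{X}(i)\mathbf{y}_{-i}=\lambda_{-i}\mathbf{y}_{-i}$.''

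For the eigenbasis claim I would argue as follows. By Lemma \ref{lem:MatX_B} the list $\mathcal{B}=\{E_0\hat{x},E_1\hat{x},E_1u^\perp_0,\ldots,E_{D-1}\hat{x},E_{D-1}u^\perp_0,E_D\hat{x}\}$ is a basis for $\mathbf{W}$, and in it $\mathcal{X}$ is block-diagonal with blocks $\mathcal{X}(0),\mathcal{X}(1),\ldots,\mathcal{X}(D-1),\mathcal{X}(D)$. For each $i$ with $1\le i\le D-1$, the scalars $\lambda_i=\tau q^i$ and $\lambda_{-i}=\tau^{-1}q^i$ are distinct (since $\tau^2=-q^{-N-1}$ is not a power of $q$, as $q$ is not a root of unity), so $\mathcal{X}(i)$ is diagonalizable with the one-dimensional eigenspaces spanned by $\mathbf{y}_i$ and $\mathbf{y}_{-i}$; hence $\{\mathbf{y}_i,\mathbf{y}_{-i}\}$ is a basis for the $i$-th block. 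Together with $\mathbf{y}_0$ spanning block $0$ and $\mathbf{y}_{-D}$ spanning block $D$, the $2D$ vectors $\{\mathbf{y}_i\}_{i=-D}^{D-1}$ form a basis of $\mathbf{W}$ adapted to the block decomposition, and each is an eigenvector of $\mathcal{X}$.

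The only mild obstacle is bookkeeping: making sure the coordinate conventions match the order of $\mathcal{B}$ in Lemma \ref{lem:MatX_B}, that the $2\times 2$ eigenvector computation is carried out with the correct denominators, and that the vectors in \eqref{eq(1):eigvecX}--\eqref{eq(3):eigvecX} are the ones actually appearing; beyond that, since Lemma \ref{lem:MatX_B} already diagonalizes $\mathcal{X}$ block by block, there is no real difficulty — everything reduces to one explicit $2\times 2$ eigenvector verification and the distinctness of $\lambda_i,\lambda_{-i}$. I would also remark that by \eqref{eq:eigvaluesXonW} the $\lambda_i$, $-D\le i\le D-1$, are pairwise distinct, which re-confirms that $\mathcal{X}$ is multiplicity-free on $\mathbf{W}$ and makes the eigenbasis unique up to scaling.
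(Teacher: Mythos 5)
Your proposal is correct and takes essentially the same route as the paper, which verifies each $\mathbf{y}_i$ directly against the block-diagonal matrix of Lemma \ref{lem:MatX_B} (the two $1\times 1$ blocks plus one $2\times 2$ eigenvector check per $1\le i\le D-1$) and deduces the eigenbasis claim from $\mathcal{B}$ being a basis for $\mathbf{W}$. One small correction that does not affect the structure of your argument: for $1\le i\le D-1$ the second eigenvalue of $\mathcal{X}(i)$ is $\lambda_{-i}=\tau^{-1}q^{-i}$, not $\tau^{-1}q^{i}$, and since $\tau^{2}=q^{-N-1}$ is itself a power of $q$, the distinctness $\lambda_i\neq\lambda_{-i}$ should be justified by $q^{N+1-2i}\neq 1$ (which holds because $N\geq 2D>2i$ and $q$ is not a root of unity), as already recorded in \eqref{eq:eigvaluesXonW}.
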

\begin{proof}
The first assertion routinely follows from \eqref{eq(1):eigvecX}, \eqref{eq(2):eigvecX}, \eqref{eq(3):eigvecX} and  Lemma \ref{lem:MatX_B}.
The second assertion immediately follows from that $\mathcal{B}$ is an basis for $\mathbf{W}$.
\end{proof}

Recall the Hermitian inner product $\langle \cdot, \cdot \rangle$ defined on $\mathbb{C}^X$.
Since the basis $\mathcal{B}$ is orthogonal on $\mathbf{W}$, we observe that 
\begin{equation}\label{iff:orthogonality}
	\langle \mathbf{y}_i, \mathbf{y}_j \rangle \ne 0 \quad \text{ if and only if } \quad j \in \{ i, -i\},  \qquad  -D\leq i,j \leq D-1.
\end{equation}
We compute explicitly the non-zero inner products. 
We first compute $\lVert E_i\hat{x}\rVert^2$, $0 \leq i \leq D$, and  $\lVert E_i u^\perp_0 \rVert^2$, $1 \leq i \leq D-1$. 
For $0 \leq i \leq D$, let $m_i$ denote the scalar as in \eqref{eq:scalar m(i)} associated with $\Phi$. Using \eqref{m_formula} and the parameter sequence of $\Phi$ in Proposition \ref{prop:LSonMx,Mxp}(I), we routinely find
\begin{equation}\label{eq:m;q-formula}
	m_i = \frac{q^i(1-q^{N-2i+1})(q^{i+1};q)_{D-i}}{(q^{N-D+1};q)_{D-i+1}}.
\end{equation}
For $0 \leq i \leq D-2$, let $m^\perp_i$ denote the scalar as in \eqref{eq:scalar m(i)}  associated with $\Phi^\perp$.
Using \eqref{m_formula} and the parameter sequence of $\Phi$ in Proposition \ref{prop:LSonMx,Mxp}(II), we routinely find
\begin{equation}\label{eq:mperp;q-formula}
	m^\perp_i = \frac{q^{N-1}(1-q^{N-D-i})(1-q^{D-i-1})(1-q^{2i-N+1})(q^{i+1};q)_{D-2-i}}{(q-1)(q^{N-D};q)_{D-i}}.
\end{equation}

\begin{lemma}\label{lem:norm.Eix} Both
\begin{itemize}
	\item[(i)] $\lVert E_i\hat{x} \rVert^2 = \dfrac{q^i(1-q^{N-2i+1})(q^{i+1};q)_{D-i}}{(q^{N-D+1};q)_{D-i+1}}, \quad  0 \leq i \leq D,$
	\item[(ii)] $\lVert E_iu^\perp_0 \rVert^2  = q^{2N-D-i}\dfrac{(1-q^{D-N+i-1})(1-q^{D-i})(1-q^{2i-N-1})(q^{i};q)_{D+1-i}}{(q-1)^2(q^{N-D+1};q)_{D-i}},  \quad 1 \leq i \leq D-1.$
\end{itemize}
\end{lemma}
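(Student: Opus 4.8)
The plan is to identify each squared norm with one of the multiplicity-type scalars $m_i$, $m^\perp_i$ computed in \eqref{eq:m;q-formula}, \eqref{eq:mperp;q-formula}, up to an elementary normalization factor coming from the lengths of the standard basis vectors.

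First I would handle (i). Since $E_i$ is a real symmetric idempotent and $\lVert\hat x\rVert^2 = 1$, we have $\lVert E_i\hat x\rVert^2 = \hat x^t E_i\hat x = (E_i)_{xx} = \mathrm{trace}(E_iE^*_0)$, where $E^*_0 = E^*_0(x)$ is the rank-one diagonal matrix picking out the $x$-coordinate. Restricting to the primary $T$-module $M\hat x$, the vector $\hat x$ spans $E^*_0 M\hat x$, so $E^*_0|_{M\hat x}$ is precisely the first dual primitive idempotent of the Leonard system $\Phi$; hence $\mathrm{trace}(E_iE^*_0)$ is exactly the scalar $m_i$ attached to $\Phi$ in \eqref{eq:scalar m(i)} (for any nonzero $v$ spanning $\mathsf E^*_0\mathsf V$ one has $\mathrm{trace}(\mathsf E_i\mathsf E^*_0) = \lVert \mathsf E_iv\rVert^2/\lVert v\rVert^2$). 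Then \eqref{eq:m;q-formula} yields the claimed formula directly.

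For (ii) I would run the same argument for $\Phi^\perp$ on $M\hat x^\perp$, with two bookkeeping adjustments. The dual idempotent ordering of $\Phi^\perp$ is $\{E^*_i\}^{D-1}_{i=1}$, so its first dual idempotent is $E^*_1|_{M\hat x^\perp}$, and by Proposition \ref{prop:LSonMx,Mxp}(II)(i) the vector $u^\perp_0$ of \eqref{prop:eq:u-perp(i)} is a nonzero element of $E^*_1 M\hat x^\perp$ and hence spans it. As above, $\mathrm{trace}(E_j|_{M\hat x^\perp}\,E^*_1|_{M\hat x^\perp}) = \lVert E_j u^\perp_0\rVert^2/\lVert u^\perp_0\rVert^2$ for $1 \leq j \leq D-1$; since the eigenvalue ordering of $\Phi^\perp$ is $\{E_i\}^{D-1}_{i=1}$, the left side equals $m^\perp_{j-1}$, so $\lVert E_ju^\perp_0\rVert^2 = m^\perp_{j-1}\,\lVert u^\perp_0\rVert^2$. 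It then remains to compute $\lVert u^\perp_0\rVert^2$: because $C^+_0$ and $C^-_1$ are disjoint, \eqref{prop:eq:u-perp(i)} gives $\lVert u^\perp_0\rVert^2 = (q^{D-1}-1)^2\,|C^+_0| + (q^{-1}-1)^2\,|C^-_1|$; inserting $|C^+_0| = q(q^{N-D}-1)/(q-1)$ together with the value of $|C^-_1|$ from Lemma \ref{cardinality:C+-} and simplifying gives $\lVert u^\perp_0\rVert^2 = (q^{D-1}-1)(q^{N-D}-1)(q^D-1)/(q-1)$. Finally, multiplying this by $m^\perp_{j-1}$ from \eqref{eq:mperp;q-formula} and collapsing the $q$-shifted factorials (using $(q^{N-D};q)_{D-j+1} = (1-q^{N-D})(q^{N-D+1};q)_{D-j}$ and $(q^j;q)_{D-1-j}(1-q^{D-1})(1-q^D) = (q^j;q)_{D+1-j}$) produces the stated expression.

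The genuinely routine parts are the cardinality substitution and the $q$-Pochhammer collapse at the end of (ii); neither is delicate. The one place requiring real care is the double re-indexing in (ii): both the eigenvalue idempotents and the dual idempotents of $\Phi^\perp$ are shifted by one relative to those of $T(x)$, so one must verify that $E_ju^\perp_0$ corresponds to $\mathsf E_{j-1}$ acting on the first dual idempotent of $\Phi^\perp$, and in particular that $u^\perp_0$ really spans the relevant one-dimensional space. This is the main (mild) obstacle; everything else is plug-and-simplify.
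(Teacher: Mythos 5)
Your proposal is correct and follows essentially the same route as the paper: both identify $\lVert E_i\hat{x}\rVert^2$ with the scalar $m_i$ of $\Phi$ and $\lVert E_iu^\perp_0\rVert^2$ with $m^\perp_{i-1}\lVert u^\perp_0\rVert^2$ for $\Phi^\perp$, compute $\lVert u^\perp_0\rVert^2=\frac{(q^D-1)(q^{D-1}-1)(q^{N-D}-1)}{q-1}$ from \eqref{prop:eq:u-perp(i)} and Lemma \ref{cardinality:C+-}, and then substitute \eqref{eq:m;q-formula}, \eqref{eq:mperp;q-formula}. Your handling of the index shift for $\Phi^\perp$ and the final $q$-Pochhammer simplifications are exactly right, so nothing further is needed.
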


\begin{proof}
(i) Since $E^*_0E_iE^*_0 = m_iE^*_0$, we have $\lVert E_i\hat{x} \rVert^2=\lVert E_iE^*_0\hat{x} \rVert^2= \langle \hat{x}, E^*_0E_iE^*_0\hat{x} \rangle=m_i$.
By this and \eqref{eq:m;q-formula}, the result follows.\\
(ii) Similarly to (i), we have $\lVert E_iu^\perp_0\rVert^2 = m^\perp_{i-1}\lVert u^\perp_0\rVert^2$, where $\lVert u^\perp_0\rVert^2 = \dfrac{(q^D-1)(q^{D-1}-1)(q^{n-D}-1)}{q-1}$ by \eqref{prop:eq:u-perp(i)} and Lemma \ref{cardinality:C+-}.
Using this and \eqref{eq:mperp;q-formula}, the result routinely follows.
\end{proof}

\begin{lemma}
Recall the eigenvectors $\mathbf{y}_i$, $-D \leq i \leq D-1$, for $\mathcal{X}$ on $\mathbf{W}$  from \eqref{eq(1):eigvecX}--\eqref{eq(3):eigvecX}.
For $1 \leq i \leq D-1$, we have
\begin{align}
	& \lVert \mathbf{y}_i \rVert^2 = \frac{ (q^{D-i}-1)(q^{N+1}+q^D-q^{D+i}-q^i)(q^i;q)_{D-i} }{(q^i-1)(q^{N-2i+1}-1)(q^{N-D+1};q)_{D-i}}, \label{lem:eq(1):norm}\\
	& \lVert \mathbf{y}_{-i} \rVert^2 = \frac{q^{D+N-2i+1}(q^{N-D-i+1}-1)(q^D-q^{D-N+i-1}-q^i+1)(q^i;q)_{D-i}}{(q^{N+1-i}-1)(q^{N+1-2i}-1)(q^{N-D+1};q)_{D-i}}, \label{lem:eq(2):norm}\\
	& \langle \mathbf{y}_i, \mathbf{y}_{-i} \rangle = \frac{q^D(1-q^{D-i})(1-q^{N-D-i+1})(q^i;q)_{D-i}}{(1-q^{N-2i+1})(q^{N-D+1};q)_{D-i}}.\label{lem:eq(3):norm}
\end{align}
Moreover,
\begin{equation}\label{lem:eq(4):norm}
	\lVert \mathbf{y}_0 \rVert^2 = \frac{(q;q)_D}{(q^{N-D+1};q)_D}, \qquad \qquad
	\lVert \mathbf{y}_{-D} \rVert^2 = \frac{q^D(q^{N-2D+1}-1)}{q^{N-D+1}-1}.
\end{equation}
\end{lemma}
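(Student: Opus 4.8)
The plan is to reduce every inner product to a combination of the component norms already computed in Lemma~\ref{lem:norm.Eix}, using the orthogonality of the basis $\mathcal{B}$ to kill all cross terms. Concretely, for $1\le i\le D-1$ I would write $\mathbf{y}_i=\alpha_iE_i\hat{x}+\beta_iE_iu^\perp_0$ and $\mathbf{y}_{-i}=\gamma_iE_i\hat{x}+\delta_iE_iu^\perp_0$, reading off $\alpha_i,\beta_i,\gamma_i,\delta_i$ from \eqref{eq(1):eigvecX} and \eqref{eq(2):eigvecX}. Because $E_i\hat{x}\in M\hat{x}$ and $E_iu^\perp_0\in M\hat{x}^\perp$, the decomposition \eqref{ODS:Mx,Mxp} forces $\langle E_i\hat{x},E_ju^\perp_0\rangle=0$ for all $i,j$; and since $\mathcal{B}$ is orthogonal one also has $\langle E_i\hat{x},E_j\hat{x}\rangle=\langle E_iu^\perp_0,E_ju^\perp_0\rangle=0$ for $i\ne j$. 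As all the vectors and coefficients are real, the Hermitian form is the ordinary dot product and the mixed terms drop out, leaving
\begin{gather*}
	\lVert\mathbf{y}_i\rVert^2=\alpha_i^2\lVert E_i\hat{x}\rVert^2+\beta_i^2\lVert E_iu^\perp_0\rVert^2,\qquad
	\lVert\mathbf{y}_{-i}\rVert^2=\gamma_i^2\lVert E_i\hat{x}\rVert^2+\delta_i^2\lVert E_iu^\perp_0\rVert^2,\\
	\langle\mathbf{y}_i,\mathbf{y}_{-i}\rangle=\alpha_i\gamma_i\lVert E_i\hat{x}\rVert^2+\beta_i\delta_i\lVert E_iu^\perp_0\rVert^2.
\end{gather*}

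Next I would substitute Lemma~\ref{lem:norm.Eix} and simplify. The helpful preparatory identities are $(q^{i+1};q)_{D-i}=\frac{1-q^{D}}{1-q^{i}}(q^{i};q)_{D-i}$, $(q^{i};q)_{D+1-i}=(1-q^{D})(q^{i};q)_{D-i}$ and $(q^{N-D+1};q)_{D-i+1}=(1-q^{N-i+1})(q^{N-D+1};q)_{D-i}$; these put both $\lVert E_i\hat{x}\rVert^2$ and $\lVert E_iu^\perp_0\rVert^2$ in the form $f\cdot P$, where $f$ is a rational function of $q^{i},q^{D},q^{N}$ and $P:=(q^{i};q)_{D-i}/(q^{N-D+1};q)_{D-i}$. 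Pulling out $P$, each of \eqref{lem:eq(1):norm}--\eqref{lem:eq(3):norm} becomes a rational-function identity that one checks by clearing denominators and matching the few surviving monomials in $q^{i},q^{D},q^{N}$; the exponents coming out of $\lVert E_iu^\perp_0\rVert^2$ are normalized via $1-q^{D-N+i-1}=q^{-(N-D-i+1)}(q^{N-D-i+1}-1)$ and $1-q^{2i-N-1}=q^{-(N-2i+1)}(q^{N-2i+1}-1)$. As a sample, for $\langle\mathbf{y}_i,\mathbf{y}_{-i}\rangle$ one obtains $\alpha_i\gamma_i\lVert E_i\hat{x}\rVert^2=\frac{q^{D}(q^{D-i}-1)(q^{N-D-i+1}-1)}{(q^{D}-1)(q^{N-2i+1}-1)}\,P$ and $\beta_i\delta_i\lVert E_iu^\perp_0\rVert^2=-\frac{q^{2D}(q^{D-i}-1)(q^{N-D-i+1}-1)}{(q^{D}-1)(q^{N-2i+1}-1)}\,P$, whose sum collapses to \eqref{lem:eq(3):norm}; \eqref{lem:eq(1):norm} and \eqref{lem:eq(2):norm} come out the same way.

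For the two boundary eigenvectors, \eqref{eq(3):eigvecX} gives $\mathbf{y}_0=E_0\hat{x}$ and $\mathbf{y}_{-D}=E_D\hat{x}$, so \eqref{lem:eq(4):norm} is immediate from Lemma~\ref{lem:norm.Eix}(i) on setting $i=0$ and $i=D$ and using $(q^{N-D+1};q)_{D+1}=(1-q^{N+1})(q^{N-D+1};q)_D$ and $(q^{N-D+1};q)_1=1-q^{N-D+1}$.

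The only real obstacle is bookkeeping: keeping the signs and the powers of $q$ under control through the cancellations between the two summands. Once the orthogonality of $\mathcal{B}$ has removed the cross terms and the $q$-Pochhammer factors have been reduced to the common $P$, what remains is a routine identity of rational functions in $q^{i},q^{D},q^{N}$.
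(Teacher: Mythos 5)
Your proposal is correct and follows essentially the same route as the paper: expand $\mathbf{y}_{\pm i}$ in the orthogonal pair $E_i\hat{x}$, $E_iu^\perp_0$ via \eqref{eq(1):eigvecX}--\eqref{eq(3):eigvecX}, substitute the norms from Lemma \ref{lem:norm.Eix}, and simplify (your sample collapse to \eqref{lem:eq(3):norm} checks out, as do the boundary cases in \eqref{lem:eq(4):norm}). The only difference is that you explicitly invoke both parts (i) and (ii) of Lemma \ref{lem:norm.Eix}, which is in fact needed and slightly more precise than the paper's citation of part (i) alone.
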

\begin{proof}
Evaluate $\lVert \mathbf{y}_i \rVert^2$ using \eqref{eq(1):eigvecX} and Lemma \ref{lem:norm.Eix}(i) and simplify the result to get \eqref{lem:eq(1):norm}. 
Similarly, we obtain \eqref{lem:eq(2):norm}, \eqref{lem:eq(3):norm} using \eqref{eq(1):eigvecX}, \eqref{eq(2):eigvecX}.
Line \eqref{lem:eq(4):norm} follows from Lemma \ref{lem:norm.Eix}(i) at $i=0,D$.
\end{proof}

Recall the space $\mathcal{L}$ from \eqref{eq:space L} and let $f \in \mathcal{L}$. 
By \eqref{eq:sum(yi)=x} and Proposition \ref{prop:eigvectorX}  the action $f(\mathcal{X}).\hat{x}$ on $\mathbf{W}$ is given as
\begin{equation*}
	f(\mathcal{X}).\hat{x} = f(\mathcal{X}).\sum^{D-1}_{i=-D}\mathbf{y}_i = \sum^{D-1}_{i=-D}f(\lambda_i)\mathbf{y}_i.
\end{equation*}
Using this and \eqref{iff:orthogonality}, we find that for $f, g \in \mathcal{L}$,
\begin{equation*}\label{eq:innerprod_W}
	\langle f(\mathcal{X}).\hat{x}, g(\mathcal{X}).\hat{x} \rangle 
	=  \sum^{D-1}_{i=-D} f(\lambda_i) \overline{g(\lambda_i)} \lVert \mathbf{y}_i \rVert^2   + \sum^{D-1}_{i=1} \Big( f(\lambda_i)\overline{g(\lambda_{-i})} + f(\lambda_{-i})\overline{g(\lambda_i)} \Big) \langle \mathbf{y}_i, \mathbf{y}_{-i} \rangle.
\end{equation*}
In particular, for basis elements $\ell^\pm_i, \ell^\pm_j$ of $\mathcal{L}$, by using Proposition \ref{prop:C_i=ell_i(X)x}
\begin{equation}\label{eq:innerprod,ell,W}
	\langle \ell^\sigma_i (\mathcal{X}).\hat{x}, \ell^\nu_j(\mathcal{X}).\hat{x} \rangle 
	=  \langle \hat{C}^\sigma_i, \hat{C}^\nu_j \rangle
	= \delta_{i,j}\delta_{\sigma, \nu} |C^\sigma_i|,
\end{equation}
where $0 \leq i, j \leq D-1$ and $\sigma, \nu \in \{+,-\}$.
Motivated by these comments, we define the Hermitian form $\langle \cdot, \cdot \rangle_\mathcal{L}$ on $\mathcal{L}$ by 
\begin{equation}\label{eq:HermitianFormL}
	\langle f, g \rangle_{\mathcal{L}} :=  \sum^{D-1}_{i=-D} f(\lambda_i) \overline{g(\lambda_i)}  \omega_i   + \sum^{D-1}_{i=1} \Big( f(\lambda_i)\overline{g(\lambda_{-i})} + f(\lambda_{-i})\overline{g(\lambda_i)} \Big) \omega^\vee_i,
\end{equation}
where $f,g \in \mathcal{L}$ and the $\lambda_i$ are from \eqref{eq:eigvaluesXonW} and the non-zero (real) scalars $\omega_i$, $\omega^\vee_j$ are given by
\begin{equation*}
	\omega_i := \lVert \mathbf{y}_i \rVert^2, \quad -D \leq i \leq D-1, \qquad \omega^\vee_i := \langle \mathbf{y}_i, \mathbf{y}_{-i} \rangle, \quad 1 \leq i \leq D-1. 
\end{equation*}
We shall now give the orthogonality relations for $\ell^\pm_i$, $0 \leq i \leq D-1$.
\begin{theorem}\label{thm:4thResult}
Let $\langle \cdot , \cdot \rangle_{\mathcal{L}}$ be the Hermitian form as in \eqref{eq:HermitianFormL}.
For $0 \leq i, j \leq D-1$ and $\sigma, \nu \in \{+,-\}$, we have 
\begin{align*}
	\langle \ell^\sigma_i, \ell^\nu_j \rangle_{\mathcal{L}} 
	& = \delta_{i,j}\delta_{\sigma, \nu}|C^\sigma_i| \\
	& = 
	\begin{cases}
	\delta_{i,j}\delta_{\sigma, \nu} q^{i(i+1)}\displaystyle\prod^i_{h=1}  \dfrac{(q^{D-h}-1)(q^{N-D+1-h}-1)}{(q^h-1)^2}, & \text{ if }\sigma=-,\\
	\delta_{i,j}\delta_{\sigma, \nu} \dfrac{q^{(i+1)^2}(q^{N-D}-1)}{q-1}\displaystyle\prod^i_{h=1}  \dfrac{(q^{D-h}-1)(q^{N-D-h}-1)}{(q^h-1)(q^{h+1}-1)}, & \text{ if } \sigma=+.
	\end{cases}
\end{align*}
\end{theorem}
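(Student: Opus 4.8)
The proof strategy is essentially to transport the orthogonality relations for the characteristic vectors $\hat{C}^\sigma_i$, which hold trivially in $\mathbb{C}^X$, to the Hermitian form on the Laurent polynomial space $\mathcal{L}$ via the module identification established in Proposition \ref{prop:C_i=ell_i(X)x}. The key observation is that $\hat{x}$ is a cyclic vector for $\mathcal{X}$ on $\mathbf{W}$ (by Proposition \ref{prop:eigvectorX} the eigenvectors $\mathbf{y}_i$ span $\mathbf{W}$ and sum to $\hat{x}$ by \eqref{eq:sum(yi)=x}), so evaluation $f \mapsto f(\mathcal{X}).\hat{x}$ gives a $\mathbb{C}$-linear isomorphism $\mathcal{L} \to \mathbf{W}$, and the Hermitian form $\langle \cdot, \cdot\rangle_{\mathcal{L}}$ was precisely designed in \eqref{eq:HermitianFormL} so that this isomorphism is an isometry onto $(\mathbf{W}, \langle\cdot,\cdot\rangle)$.

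First I would record the identity $\langle \ell^\sigma_i, \ell^\nu_j\rangle_{\mathcal{L}} = \langle \ell^\sigma_i(\mathcal{X}).\hat{x}, \ell^\nu_j(\mathcal{X}).\hat{x}\rangle$, which is immediate from \eqref{eq:HermitianFormL} together with the expansion $f(\mathcal{X}).\hat{x} = \sum_{i=-D}^{D-1} f(\lambda_i)\mathbf{y}_i$ and the orthogonality pattern \eqref{iff:orthogonality} for the $\mathbf{y}_i$ (the diagonal terms contribute $\omega_i = \lVert\mathbf{y}_i\rVert^2$ and the paired terms contribute $\omega^\vee_i = \langle \mathbf{y}_i, \mathbf{y}_{-i}\rangle$). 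Then, by Proposition \ref{prop:C_i=ell_i(X)x}, the right-hand side equals $\langle \hat{C}^\sigma_i, \hat{C}^\nu_j\rangle$. Since the cells $C^\sigma_i$ form a partition of $X$ (cf. \eqref{new.partition}), the characteristic vectors are pairwise orthogonal and $\langle \hat{C}^\sigma_i, \hat{C}^\nu_j\rangle = \delta_{i,j}\delta_{\sigma,\nu}\lVert\hat{C}^\sigma_i\rVert^2 = \delta_{i,j}\delta_{\sigma,\nu}|C^\sigma_i|$. This is exactly the content already packaged as \eqref{eq:innerprod,ell,W}, so the first equality in the theorem is essentially a restatement.

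For the second equality, I would simply substitute the explicit cardinalities $|C^\pm_i|$ computed in Lemma \ref{cardinality:C+-}: for $\sigma = -$ one gets $|C^-_i| = q^{i(i+1)}\prod_{j=1}^i \frac{(q^{D-j}-1)(q^{N-D+1-j}-1)}{(q^j-1)^2}$, and for $\sigma = +$ one gets $|C^+_i| = \frac{q^{(i+1)^2}(q^{N-D}-1)}{q-1}\prod_{j=1}^i \frac{(q^{D-j}-1)(q^{N-D-j}-1)}{(q^j-1)(q^{j+1}-1)}$, which are verbatim the two cases in the statement (with the dummy index renamed $h$). No further computation is needed.

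Since every ingredient is already available, there is no serious obstacle; the only point requiring a little care is making sure the bookkeeping in \eqref{eq:HermitianFormL} genuinely reproduces $\langle f(\mathcal{X}).\hat{x}, g(\mathcal{X}).\hat{x}\rangle$ — i.e. that the off-diagonal pairing of $\lambda_i$ with $\lambda_{-i}$ is handled symmetrically and that the eigenvectors $\mathbf{y}_i$ are real (so complex conjugation acts only on the polynomial values, as stated just below \eqref{eq(3):eigvecX}). Once that is observed, the proof is a three-line chain of equalities: $\langle \ell^\sigma_i, \ell^\nu_j\rangle_{\mathcal{L}} = \langle \ell^\sigma_i(\mathcal{X}).\hat{x}, \ell^\nu_j(\mathcal{X}).\hat{x}\rangle = \langle \hat{C}^\sigma_i, \hat{C}^\nu_j\rangle = \delta_{i,j}\delta_{\sigma,\nu}|C^\sigma_i|$, followed by invoking Lemma \ref{cardinality:C+-}.
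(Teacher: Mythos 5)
Your proposal is correct and follows essentially the same route as the paper: the chain $\langle \ell^\sigma_i, \ell^\nu_j \rangle_{\mathcal{L}} = \langle \ell^\sigma_i(\mathcal{X}).\hat{x}, \ell^\nu_j(\mathcal{X}).\hat{x} \rangle = \langle \hat{C}^\sigma_i, \hat{C}^\nu_j \rangle = \delta_{i,j}\delta_{\sigma,\nu}|C^\sigma_i|$ is exactly what the paper packages in \eqref{eq:innerprod,ell,W}, and the explicit cardinalities come from Lemma \ref{cardinality:C+-} just as in the paper's one-line proof. Your extra remarks on the isometry and the reality of the $\mathbf{y}_i$ merely make explicit what the paper's construction of $\langle \cdot,\cdot\rangle_{\mathcal{L}}$ already ensures.
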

\begin{proof}
From \eqref{eq:innerprod,ell,W} and Lemma \ref{cardinality:C+-}, the result  follows.
\end{proof}

\section*{Acknowledgement}
The author thanks to Hajime Tanaka and Paul Terwilliger for giving this paper a close reading and offering valuable suggestions. 
The author also thanks the anonymous referee for careful reading and helpful comments.


\end{document}